\documentclass{article}
\usepackage{amsfonts,amsmath,amsthm,amscd}
\usepackage[all]{xy}
\newtheorem{thm}{Theorem}[section]
\newtheorem{prop}[thm]{Proposition}
\newtheorem{lem}[thm]{Lemma}
\newtheorem{cor}[thm]{Corollary}
\theoremstyle{definition}
\newtheorem{claim}[thm]{Claim}
\newtheorem{defn}[thm]{Definition}

\newtheorem{rem}[thm]{Remark}

\makeatletter
\@addtoreset{equation}{section}

\makeatother

\newcommand{\mf}{\mathcal F}
\newcommand{\mm}{\mathcal M}

\newcommand{\mo}{\mathcal O}
\newcommand{\mv}{\mathcal V}
\newcommand{\mx}{\mathcal X}
\newcommand{\my}{\mathcal Y}
\newcommand{\mbar}{\overline{\mm}}
\newcommand{\mgp}{\overline{\mm}^{\text{\rm gp}}}

\newcommand{\Fbar}{\overline F}
\newcommand{\Kbar}{\overline K}

\newcommand{\ql}{\mathbb Q_\ell}
\newcommand{\zl}{\mathbb Z_\ell}

\newcommand{\flbar}{\overline{\mathbb F}_\ell}
\newcommand{\zz}{\mathbb Z}
\newcommand{\nn}{\mathbb N}
\newcommand{\qq}{\mathbb Q}

\newcommand{\tr}{\mathop{\mathrm{Tr}}\nolimits}
\newcommand{\spec}{\mathop{\mathrm{Spec}}\nolimits}
\newcommand{\Aut}{\mathop{\mathrm{Aut}}\nolimits}
\newcommand{\gal}{\mathop{\mathrm{Gal}}\nolimits}

\title{$\ell$-independence of the trace of local monodromy in a relative case}
\author{Hiroki Kato
\thanks{Graduated School of Mathematical Science, The University of Tokyo. 
\texttt{Email: hiroki@ms.u-tokyo.ac.jp}}
\\with an appendix by 
Qing Lu and 
%\thanks{School of Mathematical Sciences, Beijing NormalUniversity, Beijing 100875, China; email: \texttt{qlu@bnu.edu.cn}.} and
Weizhe Zheng}
%\thanks{Morningside Center of Mathematics and Hua Loo-Keng KeyLaboratory of Mathematics, Academy of Mathematics and Systems Science,Chinese Academy of Sciences, Beijing 100190, China; University of thChinese Academy of Sciences, Beijing 100049, China; email:
%\texttt{wzheng@math.ac.cn}. Partially supported by National Natural ScienceFoundation of China Grants 11688101, 11822110.}}
\date{}

\begin{document}
\maketitle
\begin{abstract}
For a family of varieties, 
we prove that 
the alternating sum of the traces of ``local" monodromy 
acting on the $\ell$-adic \'etale cohomology groups of the generic fiber 
is an integer which is independent of $\ell$. 
\end{abstract}

\section{Introduction}
Ochiai proved an $\ell$-independence result for a variety over a local field. 
To be more precise, 
let $K$ be a henselian discrete valuation field, 
$\overline K$ a separable closure of $K$, 
and $X$ a variety over $K$. 
%More precisely, for a variety $X$ over a henselian discrete valuation field $K$, 
He proved that 
%the alternating sum of the trace of the inertia action on 
%the $\ell$-adic \'etale cohomology groups 
%$H_c^i(X_{\bar K},\ql)$ 
for an element $\sigma$ of the inertia subgroup $I_K$ of the absolute Galois group of $K$, 
the alternating sum 
\begin{equation*}
\sum_q(-1)^q\tr(\sigma,H_c^q(X_{\overline K},\ql))
\end{equation*}
is an integer independent of $\ell$ distinct from the residual characteristic \cite[Theorem B]{O}. 

By the same method, Vidal established an equivariant version of his result, 
that is, for a variety $X$ over $K$ with an action of a finite group $G$ 
and for an element $(g,\sigma)\in G\times I_K$, the alternating sum 
\begin{equation*}
\sum_q(-1)^q\tr((g,\sigma),H_c^q(X_{\overline K},\ql))
\end{equation*}
is an integer independent of $\ell$ distinct from the residual characteristic \cite[Proposition 4.2]{V1}. 
She applied it to compare wild ramification of the elements $[R\Gamma_c(Z_{\Kbar},\mf_i)]$ 
in the Grothendieck group of the category of Galois representations 
for constructible \'etale $\flbar$-sheaves $\mf_i$ $(i=1,2)$ on a variety $Z$ over a henselian discrete valuation field $K$ \cite[Th\'eor\`eme 3.1]{V1}. 

%In her work, she introduced a subset of the \'etale fundamental group, 
%which we call Vidal's ramified part. 
%It controls ramification along compactifications over $O_K$. 
%For the precise definition, see \cite[2.1]{V1}, \cite[1.1]{V} or Section \ref{ramified part}. 

She also worked in a relative situation. 
Let $f:Z\to Y$ be a morphism of varieties over a henselian discrete valuation field $K$ with excellent integer ring. 
In \cite{V}, to compare wild ramification of the elements $[Rf_!\mf_i]$ 
in the Grothendieck group of the category of constructible \'etale $\flbar$-sheaves on $Y$ 
for constructible \'etale $\flbar$-sheaves $\mf_i$ $(i=1,2)$ on $Z$, 
she proved the following $\ell$-independence result for relative curves \cite[Proposition 2.2.1]{V}. 
Since it concerns on monodromy action on the cohomology of the generic fiber of a relative curve, 
we start with a curve $X$ over a field $L$ finitely generated over $K$, 
although she did with a relative curve. 
We consider an action of a finite group $G$ on $X$ over $L$. 
Let $\overline L$ be a separable closure of $L$. 
Then, we have a monodromy action composed with the action of $G$, 
that is an action of $G\times\gal(\overline L/L)$ 
on the cohomology groups $H_c^q(X_{\overline L},\ql)$. 
She proved that 
for every $g\in G$ and $\sigma\in\gal(\overline L/L)$ in Vidal's ramified part, i.e., an element coming from the inertia group of a valuation ring over $S=\spec\mo_K$ (see Section \ref{ramified part} for the precise definition), 
the alternating sum 
$$
\sum_q(-1)^q\tr((g,\sigma),H_c^q(X_{\overline L},\ql))
$$
is an integer independent of $\ell$ distinct from the residual characteristic of $K$. 
%for any element $\sigma$ of the ramified part of $\pi_1(Y,\bar\eta)$ and any element $g$ of $G$, 
%the alternating sum of the trace of is an integer independent of $\ell$ distinct from the residual characteristic. 
%She again deduced, from this result and a study of fixed points, that, 
%for a constructible \'etale $\fl$-sheaf $\mf$ on a $X$, 
%wild ramification of the push $Rf_!\mf$ viewed as an element of the Grothendieck group 
%is determined by wild ramification of $\mf$. 

We generalize her $\ell$-independence result to the case of a general family. 
We work over an arbitrary excellent noetherian scheme of dimension $\leq2$. 
%	which plays the roll of the ring of integers $O_K$ in the above results. 
\begin{thm}\label{main}
Let $S$ be an excellent noetherian scheme of dimension $\leq2$. 
Let $L$ be a field and $\spec L\to S$ be a morphism of schemes such that $L$ is finitely generated over the residue field at the image. 
Let $X$ be a scheme separated and of finite type over $L$ on which a finite group $G$ acts admissibly. 
Let $\overline L$ be a separable closure of $L$. 
Then, for every $g\in G$ and 
$\sigma\in\gal(\overline L/L)$ in Vidal's ramified part, 
i.e., an element coming from the inertia group of a valuation ring over $S$, 
the alternating sum 
$$
\sum_q(-1)^q\tr((g,\sigma),H_c^q(X_{\overline L},\ql))
$$
is an integer independent of a prime number $\ell$ invertible on $S$. 
\end{thm}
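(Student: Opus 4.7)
The plan is to reduce the general statement to Vidal's relative-curve theorem and her absolute theorem, via dévissage combined with a successive curve-fibration argument, using the bound $\dim S\leq 2$ to control the valuation-theoretic combinatorics.

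First, by additivity of alternating traces of $(g,\sigma)$ across $G$-equivariant distinguished triangles in compactly supported $\ell$-adic cohomology, combined with $G$-equivariant de Jong alterations and an equivariant Nagata compactification, one reduces to the case where $X$ is smooth and projective over $L$. The admissibility of the $G$-action ensures that such equivariant alterations and compactifications exist, and one may work with ordinary cohomology of a smooth proper model.

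Second, one spreads out. Since $L$ is finitely generated over $k(s)$, where $s$ is the image of $\spec L\to S$, choose a normal integral $S$-scheme $Y$ of finite type with function field $L$, and extend $X\to\spec L$ to a proper $G$-equivariant morphism $f:\mx\to Y$ with smooth generic fiber. The valuation ring $\mv$ of $\overline L$ over $S$ giving rise to $\sigma$ centers at a point $y\in Y$ (possibly after blowing up $Y$ along the valuation, which is permitted by the valuative criterion), whose image in $S$ lies under $s$.

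Third, one fibers the base in curves. Using excellence and the bound $\dim Y\leq\dim S+\mathrm{tr.deg}_{k(s)}L$, together with low-dimensional resolution, arrange a tower
\begin{equation*}
Y=Y_0\longrightarrow Y_1\longrightarrow\cdots\longrightarrow Y_r
\end{equation*}
in which each map is, étale-locally near the successive images of $y$, a smooth proper relative curve, and $Y_r$ is the spectrum of a henselian local ring of $S$. By the compatibility of nearby cycles with composition and proper base change, the trace
\begin{equation*}
\sum_q(-1)^q\tr((g,\sigma),H_c^q(X_{\overline L},\ql))
\end{equation*}
is expressed as an iterated sum of traces on nearby-cycle complexes along each step. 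At each intermediate step one invokes Vidal's relative-curve theorem [V, Prop.~2.2.1]; at the bottom step over $Y_r$ one invokes her absolute theorem [V1, Prop.~4.2]. The hypothesis $\dim S\leq 2$ is used precisely to guarantee that the tower can be constructed so that the terminal base is effectively a henselian trait (or a finite number of them glued), for which the absolute theory of Ochiai and Vidal applies directly.

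The main obstacle will be organizing this iterated reduction so that the rank-decomposition of the valuation $\mv$ aligns coherently with the steps of the fibration, and so that the inertia element $\sigma$ is tracked accurately through each application of $R\Psi$; a parallel technical issue is the simultaneous construction of $G$-equivariant models, $G$-equivariant curve fibrations on $Y$, and $G$-equivariant resolutions of the total spaces, which in the low-dimensional setting considered here is delicate but feasible thanks to excellence and the bound $\dim S\leq 2$.
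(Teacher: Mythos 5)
There is a genuine gap, in fact several. The central step of your plan --- fibering the \emph{base} $Y$ in curves down to ``a henselian trait'' and expressing the trace as an iterated sum over nearby-cycle complexes at each stage --- does not work as stated. First, nearby cycles over a general base do not compose or commute with base change without substantial extra input (this is exactly the difficulty that forces the use of oriented products and Orgogozo-type constructibility results, or, as in this paper, of log geometry); the phrase ``by the compatibility of nearby cycles with composition'' is asserting the hardest point rather than proving it. Second, Vidal's relative-curve theorem \cite[Proposition 2.2.1]{V} concerns a curve fibration of the \emph{total space} over a base that lives over an excellent \emph{trait}; it is not available over an intermediate $Y_i$ of dimension $>1$, so it cannot be invoked ``at each intermediate step.'' Third, when $\dim S=2$ the local rings at closed points of a compactification of $L$ over $S$ have dimension $\geq 2$, and the elements of Vidal's ramified part arise from valuation rings of arbitrary rank, so the terminal base is \emph{not} a henselian trait and the absolute theorems of Ochiai and Vidal do not apply at the bottom of your tower. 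The hypothesis $\dim S\leq 2$ is actually used in the paper only to resolve singularities of the base compactification (Lipman plus embedded resolution for excellent surfaces, Corollary \ref{Lipman}), not to terminate any tower in a trait.

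A further, more structural gap: reducing to ``$X$ smooth and projective over $L$'' with a model over $Y$ having smooth generic fiber gives no control whatsoever on the action of $\sigma\in E_{L/S}$, because that action is governed by the degeneration of the family over the boundary of $Y$, not by the generic fiber. What is actually needed --- and what the paper obtains from Gabber's equivariant refinement of de Jong (Lemma \ref{alt}) --- is a $G$-integral model over a compactification $\my$ of $L$ over $S$ which is a strictly $G$-split \emph{pluri-nodal} fibration with normal crossings boundary, i.e.\ relative semistable reduction along the whole boundary. Once this is in place the paper does not iterate nearby cycles at all: the fixed geometric point of $(g,\sigma)$ on $\my$ guaranteed by the characterization of the ramified part (Lemma \ref{char of E}), Nakayama's log smooth proper base change theorem applied to the nodal fibration with its canonical log structures, and the computation of $R\varepsilon_*\ql$ as $\bigwedge^q(\mgp_{\mx_y/y}\otimes\ql(-1))$ identify the trace in one step with a sum of Lefschetz numbers of $g$ on strata of the special fiber. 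Your proposal would need to be reorganized around such a single semistable model and a mechanism replacing the unjustified iterated $R\Psi$ d\'evissage before it could be considered a proof.
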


The assumption on dimension of $S$ will be used to resolve singularities (Corollary \ref{Lipman}). 

We make comments on Vidal's ramified part. 
Vidal's ramified part is a higher dimensional analogue of the inertia subgroup for a henselian discrete valuation field. 
It is defined for a morphism $\spec L\to S$ of schemes with $L$ a field 
and is a subset (not a subgroup in general) of $\gal(\overline L/L)$, 
where $\overline L$ is a separable closure of $L$ (Definition \ref{Vidal}). 
For example, if $S$ is $\spec\mo_K$ for a henselian discrete valuation ring $\mo_K$ 
and $\spec L\to S$ is the natural open immersion $\spec K\to \spec\mo_K$, 
then Vidal's ramified part for $\spec L\to S$ coincides with the inertia subgroup of $K$. 
%In other words, elements of Vidal's ramified part are ramified along $S$. 
%To give a brief explanation of Vidal's ramified part in a higher dimensional case, 
%let us assume that 
More generally, if $S$ is noetherian 
and if $L$ is the function field of a normal connected scheme $\my$ proper over $S$, 
then every element of Vidal's ramified part belongs to 
the inertia subgroup at some geometric point $y$ of $\my$, 
i.e., the image of the morphism $\pi_1(\spec L\times_\my\my_{(y)},\alpha)\to\gal(\overline L/L)$ for some geometric point $\alpha$ lying above $\spec \overline L$ (Lemma \ref{char of E}). 
In other words, elements of Vidal's ramified part are ramified along each ``compactification'' of $\spec L$ over $S$. 

%Then, Vidal's ramified part over $S$, which will be denoted by $E_{L/S}$, is characterized as the subset of $\gal(\overline L/L)$ 
%consisting of elements which ``inert'' at some point for any ``compactification'' of $\spec L$ over $S$ (Lemma \ref{char of E}). 
%To be more precise, let $\sigma\in\gal(\overline L/L)$ be an element in Vidal's ramified part over $S$ 
%and $\my$ a normal ``compactification'' of $\spec L$ over $S$, 
%i.e., $\my$ is a normal connected scheme proper over $S$ with function field isomorphic to $L$ over $S$. 
%Then, $\sigma$ comes from ``inertia subgroup'' at some point $y\in \my$, 
%that is, for every finite Galois extension $L'/L$, 
%there exists a geometric point $\bar y'$ of the normalization of $\my$ in $L'$ which is fixed by $\sigma$.  
%In particular, if $S=\spec L$, then Vidal's ramified part $E_{L/S}$ is trivial. 
%If $\mo_K$ is a henselian discrete valuation ring, then Vidal's ramified part $E_{K/\spec\mo_K}$ is the inertia subgroup $I_K$. 

We explain the strategy of the proof of Theorem \ref{main}. First, we briefly recall the proof of Ochiai's $\ell$-independence result for a variety over a henselian discrete valuation field. 
Ochiai reduced the proof to the semi-stable reduction case with a finite group action by taking an alteration using a result of de Jong. 
Then, he used the weight spectral sequence by Rapoport-Zink to describe the Galois action on the \'etale cohomology groups in terms of geometry of the closed fiber and to deduce $\ell$-independence from the Lefschetz trace formula. 
%To explain more on this strategy, let $X$ be a variety over a strictly henselian discrete valuation field $K$. 
%It is easy to reduce the problem to the case where $X$ is smooth and proper over $K$ by arguing induction on the dimension of $X$. 
%So we consider a smooth proper variety $X$ with an integral model $\mx$ over $\mo_K$. 
%In the introduction, we assume for simplicity that 
%there exists a finite Galois extension $K'$ of $K$ with Galois group $G$ 
%and a $G$-equivariant blowup $\mx'\to\mx\otimes_{\mo_K}\mo_{K'}$ 
%such that $\mx'$ is strictly semi-stable over $\mo_{K'}$. 
%We consider the weight spectral sequence for $\mx'$;
%%$$
%%E_1^{p,q}=\bigoplus_{a}H^{q+2a}()
%%$$ 
%which is $\gal(\Kbar/K)$-equivariant. 
%Here $\gal(\Kbar/K)$ acts on the $E_1$-term via the action of $G$ on the closed fiber of $\mx'$. 
%This enables us to describe the Galois action on the cohomology of the generic fiber 
%in terms of finite group actions on the closed fiber. 
%To the latter we can apply the Lefschetz trace formula to deduce $\ell$-independence. 

To work in a relative setting, Vidal modified Ochiai's proof using log structures of Fontaine-Illusie instead of the weight spectral sequence. 
Our proof is based on her idea. 
We also take an alteration using a refinement of de Jong's result due to Gabber (Lemma \ref{alt}) 
to reduce the problem to the ``semi-stable reduction'' case with a finite group action (Proposition \ref{localmonod}). 
%the case where reduction of the variety $X$ along is ``semi-stable'' 
Here, under the above notations, reduction is considered along a ``compactification'' of $\spec L$ over $S$ and ``semi-stable'' means that the variety $X$ is the generic fiber of a ``nodal fibration'' (``pluri nodal fibration'' in \cite{dJ2}, see Section \ref{nodal} for the precise definition) over a ``compactification'' of $\spec L$ over $S$. 
Then, we apply Nakayama's log smooth and proper base change theorem to the ``nodal fibration'', which we will denote by $\mx\to\my$, with the natural log structures. 
It gives us a canonical isomorphism between the \'etale cohomology group of the generic fiber and the Kummer \'etale cohomology group of the log geometric fiber $\mx_{\tilde y}$ over a closed point $y$ of $\my$. 
To describe the Kummer \'etale cohomology in terms of the usual \'etale cohomology, we compute the ``log nearby cycle complex'' $R\varepsilon_*\ql$, where $\varepsilon$ is the morphism forgetting the log structure of $\mx_{\tilde y}$. 
Then, we can deduce the $\ell$-independence from the Lefschetz trace formula. 

Theorem \ref{main} can be used to remove a reduction argument 
in the proof of the main result of \cite{V} on comparison of wild ramification, 
in which she decomposes a morphism into relative curves. 
Further, in a forthcoming paper, we plan to apply Theorem \ref{main} to compare wild ramification of nearby cycle complexes. 

%Using Theorem \ref{main}, in the proof of the main result of \cite{V} on comparison of wild ramification, we can remove the reduction argument in which she decomposes a morphism into relative curves. 
%Further, in a forthcoming paper, we plan to apply Theorem \ref{main} to compare wild ramification of nearby cycle complexes. 

Recently Q.\ Lu and W.\ Zheng also obtained an $\ell$-independence result in equal characteristic case. 
They proved that, for arbitrary $S$ of characteristic $p>0$, 
the trace of each $\sigma\in\gal(\overline L/L)$ in Vidal's ramified part 
acting on the cohomology group of each degree 
is independent of $\ell\neq p$ \cite[Theorem 1.4 and Remark 2.18(3)]{LZ}. 

After the first version of this article was circulated, 
Q.\ Lu and W.\ Zheng informed us 
that Theorem \ref{main} can be deduced from results 
\cite[Corollary 1.3]{LZ} and \cite[Proposition 4.15]{Z} on compatibility of $\ell$-adic sheaves 
and that the assumptions on $S$ and $\spec L\to S$ can be removed, 
i.e., Theorem \ref{main} holds for an arbitrary scheme $S$ 
and an arbitrary morphism $\spec L\to S$ with $L$ being a field. 
We include their comment as an appendix. 

This paper is organized as follows. 
In Section \ref{ramified part} we give a definition of Vidal's ramified part in our setting and show some basic properties. 
We recall Gabber's refinement (Lemma \ref{alt}) of de Jong's result in Section \ref{nodal} 
and basic properties of log structures in Section \ref{log}. 
After formulating ``monodromy action composed with a finite group action'' in an equivariant setting in Section \ref{G-sheaves}, 
we prove the main result in Section \ref{indep}. 

\paragraph{Acknowledgment}
The author would like to thank my advisor T.\ Saito 
for helpful discussions, 
reading this article carefully, 
and giving comments which improved various points of the article. 
He also thanks 
D.\ Takeuchi for helpful discussions 
and Q.\ Lu and W.\ Zheng for sharing another proof of Theorem \ref{main} and kindly suggesting the author to include it in the appendix. 
The research was supported by the Program for Leading Graduate Schools, MEXT, Japan and also by JSPS KAKENHI Grant Number 18J12981.

\section{Vidal's ramified part}\label{ramified part}
%%In this section, we introduce Vidal's ramified part and study its basic properties. 
%%Vidal's ramified part is defined as a subset of an absolute Galois group or an \'etale fundamental group 
%%and, roughly speaking, it consists of elements coming from the inertia groups of valuation rings over a fixed base scheme. 
%%
%%Our setting is slightly different from Vidal's original one. 
%%The first different point is a generality of a base scheme which plays a role of specifying a boundary along which we consider ramification: 
%%Our base scheme is an arbitrary excellent noetherian scheme, while hers was an excellent trait. 
%%The second is where the ramified part is defined: 
%%We define it as a subset of the absolute Galois group of a field finitely generated over a fixed base scheme (Definition \ref{Vidal}), 
%%while she defined it as a subset of the \'etale fundamental group of a normal connected scheme separated of finite type over her base \cite[1.2]{V}. 
%%When our base is an excellent trait, we can recover her original definition by taking the image under the natural surjection from the absolute Galois group of the function field to the \'etale fundamental group, see Lemma \ref{Vidal original} for the precise statement. 
%%
%We give a definition of Vidal's ramified part: 

\begin{defn}\label{Vidal}
Let $S$ be a scheme, $K$ be a field, and $\spec K\to S$ a morphism of schemes. 
We take a separable closure $\Kbar$ of $K$. 
We define a subset $E_{K/S}$ of $\gal(\Kbar/K)$, 
which we call Vidal's ramified part, 
as follows: 
We consider a following commutative diagram: 
\begin{eqnarray}\label{diagram}
\begin{xy}
\xymatrix{
\spec\Fbar\ar[r]^{\bar\iota}\ar[d]&\spec\Kbar\ar[d]\\
\spec F\ar[r]^{\iota}\ar[d]&\spec K\ar[d]\\
\spec\mo_F\ar[r]&S,}
\end{xy}
\end{eqnarray}
where $\mo_F$ is a strictly henselian valuation ring, $F$ its field of fraction, and $\Fbar$ a separable closure of $F$. 
We define a subset $E_{(\iota,\bar\iota)}$ of $\gal(\Kbar/K)$ as the image of the natural map 
$\gal(\Fbar/F)\to\gal(\Kbar/K)$. 
We define $E_{K/S}$ to be the closure of the union of $E_{(\iota,\bar\iota)}$ for all diagrams as above. 
\end{defn}

We study functorial properties of Vidal's ramified part: 
\begin{lem}\label{on E}
Let $S$ be a scheme. 
\begin{enumerate}
\item\label{functoriality}
$($c.f. $\text{\rm\cite[Proposition 2.1.1]{V1}}$, $\text{\rm\cite[Lemma 2.4]{KH}})$ 
We consider a commutative diagram 
$$
\begin{xy}
\xymatrix{\spec\overline{K'}\ar[r]\ar[d]&\spec \Kbar\ar[d]\\\spec K'\ar[r]\ar[d]&\spec K\ar[d]\\S'\ar[r]&S}
\end{xy}
$$
of schemes such that $K$ and $K'$ are fields 
and that $\Kbar$ and $\overline{K'}$ are separable closures of $K$ and $K'$ respectively.  
Then, the natural morphism 
$\gal(\overline{K'}/K')\to\gal(\Kbar/K)$ 
induces a map 
$E_{K'/S'}\to E_{K/S}$. 
%	$f:W\to Y$ a morphism of normal connected $S$-schemes separated of finite type 
%	and $\beta$ a geometric point of $W$ lying above $\alpha$. 
%\item\label{base and E}
%Let $Y\to S'$ be a morphism of $S$-schemes separated and of finite type with $Y$ normal connected. 
%Let $\alpha$ be a geometric point of $Y$. 
%Assume that $S'\to S$ is proper. 
%Then, we have $E(Y/S',\alpha)=E(Y/S,\alpha)$. 
\item\label{ram part pinsep}
Let $K$ be a field and $\spec K\to S$ be a morphism of schemes. 
Let $L$ be a finite purely inseparable extension of $K$. 
We take a separable closure $\overline L$ (resp. $\Kbar$) of $L$ (resp. $K$) and fix an embedding $\Kbar\to\overline L$ over $K$. 
Then, the map 
$E_{L/S}\to E_{K/S}$ 
defined in \ref{functoriality} is bijective. 
\end{enumerate}
\end{lem}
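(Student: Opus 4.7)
Since $L/K$ is finite purely inseparable, the restriction map $\gal(\overline L/L)\to\gal(\Kbar/K)$ is an isomorphism of profinite groups: $L\cdot\Kbar\subseteq\overline L$ is a purely inseparable extension of the separably closed field $\Kbar$, hence itself separably closed, and since it contains $L$ it equals $\overline L$; then $L\cap\Kbar=K$ gives injectivity, and unique extendibility of $K$-automorphisms of $\Kbar$ along the purely inseparable extension $\overline L/\Kbar$ gives surjectivity. Part \ref{functoriality} realises $E_{L/S}\to E_{K/S}$ as the restriction of this isomorphism to a subset, so injectivity is automatic; only surjectivity remains.

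For surjectivity, fix a diagram (\ref{diagram}) for $K$ with strictly henselian valuation ring $\mo_F$, fraction field $F$, and separable closure $\Fbar$. I claim that the preimage of $E_{(\iota,\bar\iota)}$ in $\gal(\overline L/L)$ is contained in $E_{(\iota',\bar\iota')}$ for a suitable diagram for $L$. The plan is to fix an algebraic closure $\Omega\supseteq\Fbar$, embed $L$ uniquely into $\Omega$ over $K$ (possible since a purely inseparable extension has a unique embedding into any algebraically closed overfield), and set $F':=F\cdot L\subseteq\Omega$. Take $\mo_{F'}$ to be the integral closure of $\mo_F$ in $F'$; this is again a strictly henselian valuation ring because (a) a valuation extends uniquely to any purely inseparable extension, (b) a finite extension of a henselian local ring is henselian, and (c) its residue field is purely inseparable over the separably closed residue field of $\mo_F$, hence separably closed. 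Set $\overline{F'}:=F'\cdot\Fbar\subseteq\Omega$; this is a separable closure of $F'$ for the same reason, and the same argument as in the previous paragraph produces an isomorphism $\gal(\overline{F'}/F')\xrightarrow{\sim}\gal(\Fbar/F)$. The required diagram for $L$ then consists of the composition $\spec\mo_{F'}\to\spec\mo_F\to S$, the map $\iota'\colon\spec F'\to\spec L$ coming from $L\subseteq F'$, and the map $\bar\iota'\colon\spec\overline{F'}\to\spec\overline L$ corresponding to the embedding $\overline L=L\cdot\Kbar\hookrightarrow F'\cdot\Fbar=\overline{F'}$ obtained by combining $L\subseteq F'$ with $\bar\iota$ (coherent on the intersection because $L\cap\Kbar=K$).

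A direct chase shows that the two Galois isomorphisms $\gal(\overline{F'}/F')\cong\gal(\Fbar/F)$ and $\gal(\overline L/L)\cong\gal(\Kbar/K)$ intertwine the vertical restriction maps attached to the two diagrams: an element $\tau'\in\gal(\overline{F'}/F')$ lifting $\tau\in\gal(\Fbar/F)$ is uniquely determined by fixing $F'$ and acting as $\tau$ on $\Fbar$, and its restriction along $\bar\iota'$ therefore fixes $L$ and acts as $\tau|_{\Kbar}$ on $\Kbar$, i.e., equals the preimage of $\tau|_{\Kbar}$. Hence $E_{(\iota',\bar\iota')}$ contains the preimage of $E_{(\iota,\bar\iota)}$; taking unions over all $K$-diagrams and then closures (permissible because the profinite isomorphism $\gal(\overline L/L)\cong\gal(\Kbar/K)$ is a homeomorphism and hence commutes with closures) shows $E_{L/S}$ contains the preimage of $E_{K/S}$, finishing surjectivity. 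The only real technical input is the standard fact that a purely inseparable algebraic extension of a separably closed field is separably closed, which is invoked three times (for $\overline L/\Kbar$, for the residue field of $\mo_{F'}$, and for $\overline{F'}/\Fbar$); apart from this it is pure diagram chasing, which I expect to be the main bookkeeping challenge.
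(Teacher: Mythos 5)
Your argument is correct and follows essentially the same route as the paper: the paper likewise reduces to surjectivity via the isomorphism $\gal(\overline L/L)\cong\gal(\Kbar/K)$, forms the compositum $E=F\cdot L$ (your $F'$) inside an algebraic closure of $F$, and notes that the normalization of $\mo_F$ in this finite purely inseparable extension is again a strictly henselian valuation ring. Your write-up simply supplies the routine verifications (uniqueness of the valuation extension, the residue field computation, and the compatibility of the two Galois isomorphisms) that the paper leaves implicit.
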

\begin{proof}
\begin{enumerate}
\item
Clear from the definition. 
\item
Since the injectivity of $E_{L/S}\to E_{K/S}$ follows from the bijectivity of the map between the absolute Galois groups, it suffices to prove that the map $E_{L/S}\to E_{K/S}$ is surjective. 
Let 
$$
\begin{xy}
\xymatrix{
\spec\Fbar\ar[r]\ar[d]&\spec\overline L\ar[d]\\
\spec F\ar[r]\ar[d]&\spec K\ar[d]\\
\spec\mo_F\ar[r]&S}
\end{xy}
$$
be a diagram such that 
$\mo_F$ is a strictly henselian valuation ring, $F$ its field of fractions, and $\Fbar$ an algebraic closure of $F$. 
Let $E$ be the minimum subfield of $\Fbar$ containing $F$ and $L$. 
Then, $E$ is finite and purely inseparable over $F$. 
Further, the normalization of $\mo_F$ in $E$ is a strictly henselian valuation ring. 
Thus, the map in the assertion is surjective. 
\end{enumerate}
\end{proof}

Let $\spec K\to S$ be as above and assume it is ``essentially of finite type'' (see Definition \ref{cptfn} below). 
In Lemma \ref{char of E} below, we characterize Vidal's ramified part $E_{K/S}$ 
as the subset of $\gal(\Kbar/K)$ consisting of elements which have a fixed geometric point for every compactification over $S$ and every finite Galois extension of $K$. 
To be more precise, we make the following definition:

%In the next lemma, we consider proper normal connected $S$-scheme with field of fractions $K$. 
%More precisely, we make a following definition. 
\begin{defn}\label{cptfn}
Let $S$ be a scheme, $K$ a field, and $\spec K\to S$ a morphism of schemes. 
\begin{enumerate}
\item
\label{fg}
We say that $\spec K\to S$ is {\it essentially of finite type} if 
the field $K$ is a finitely generated extension of the residue field at the image. 
\item
Assume that $\spec K\to S$ is essentially of finite type. 
A {\it compactification of} $K$ {\it over} $S$ is an integral scheme $\my$ proper over $S$ with an $S$-morphism $\spec K\to\my$ which induces an isomorphism between $\spec K$ and the generic point of $\my$. 
\end{enumerate}
\end{defn}
%The most essential reason we consider the Vidal's ramified part is the property 2 in the following lemma. 
Logically, in the proof of the main theorem, we use only the implication $1\Rightarrow 2$ in Lemma \ref{char of E} below and do not use the other implication $2\Rightarrow 1$. But, this characterization shows that Vidal's ramified part $E_{K/S}$ is the largest subset to which we can apply the argument in Section \ref{indep}. 
\begin{lem}[{cf. \cite[6.1, 6.3]{V}}]\label{char of E}
Let $S$ be a noetherian scheme 
and $K$ be a field with a morphism $\spec K\to S$ which is essentially of finite type. 
We take a separable closure $\Kbar$ of $K$. 
For $\sigma\in\gal(\Kbar/K)$, the following are equivalent. 
\begin{enumerate}
\item
$\sigma\in E_{K/S}$. 
\item
For every normal compactification $\my$ of $K$ over $S$ 
and every finite Galois extension $L$ of $K$ contained in $\Kbar$, 
there exists a geometric point $\bar v$ of the normalization $\mv$ of $\my$ in $L$ such that $\sigma \bar v=\bar v$ under the natural action of $\gal(\Kbar/K)$ on $\mv$. 
\end{enumerate}
\end{lem}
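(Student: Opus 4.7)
The key reformulation is that the condition $\sigma\bar v = \bar v$ on a geometric point $\bar v$ of $\mv$ with underlying scheme-theoretic point $v'$ is equivalent to $\sigma|_L$ both fixing $v'$ topologically and acting trivially on $\kappa(v')$; equivalently, to $v'$ lying in the underlying set of the scheme-theoretic fixed subscheme $(\mv_{\my,L})^{\sigma|_L}$. Under this reformulation, condition (2) becomes: for every $(\my, L)$, the fixed subscheme $(\mv_{\my, L})^{\sigma|_L}$ is nonempty. Since this depends on $\sigma$ only through its image in the finite quotient $\gal(L/K)$, condition (2) defines a closed subset of $\gal(\Kbar/K)$.

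For $(1)\Rightarrow(2)$, since $E_{K/S}$ is the closure of $\bigcup E_{(\iota, \bar\iota)}$ and condition (2) is closed, it suffices to treat $\sigma \in E_{(\iota, \bar\iota)}$ for a single diagram as in Definition \ref{Vidal}. Given such a diagram, I would form the compositum $F' = F \cdot L$ inside $\Fbar$ (a finite Galois extension of $F$) and let $\mo_{F'}$ be the integral closure of $\mo_F$ in $F'$. The ring $\mo_{F'}$ is local since $\mo_F$ is henselian, a valuation ring (unique extension of $\mo_F$), and has separably closed residue field because $k(\mo_{F'})/k(\mo_F)$ is purely inseparable over the separably closed $k(\mo_F)$; hence $\mo_{F'}$ is again strictly henselian. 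The valuative criterion of properness uniquely extends $\spec F' \to \spec K \to \my$ to $\spec \mo_{F'} \to \my$, which by the universal property of the normalization factors through some $\phi\colon \spec \mo_{F'} \to \mv$. Setting $\bar v$ to be the composition $\spec k(\mo_{F'}) \hookrightarrow \spec \mo_{F'} \xrightarrow{\phi} \mv$, the uniqueness part of the valuative criterion forces $\tau \circ \phi = \phi \circ \sigma^*$, where $\tau = \sigma|_L$ acts on $\mv$ and $\sigma^*$ is the automorphism of $\spec \mo_{F'}$ induced by $\sigma|_{F'} \in \gal(F'/F)$; since $\sigma|_{F'}$ acts trivially on the purely inseparable extension $k(\mo_{F'})/k(\mo_F)$, this yields $\tau \circ \bar v = \bar v$.

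For $(2)\Rightarrow(1)$, I would use a Riemann--Zariski compactness argument. For each pair $(\my, L)$, the set $T_{\my, L} := |(\mv_{\my, L})^{\sigma|_L}|$ is a closed subset of the quasi-compact spectral space $|\mv_{\my, L}|$ ($\mv_{\my, L}$ being proper over the noetherian $S$), and is nonempty by hypothesis. These assemble into a cofiltered inverse system of nonempty quasi-compact spectral spaces, so the inverse limit is nonempty. Via the identification of $\varprojlim_L \varprojlim_{\my} \mv_{\my, L}$ with the Riemann--Zariski space of $\Kbar$ over $S$, a point of $\varprojlim T_{\my, L}$ corresponds to a valuation ring $\mo$ of $\Kbar$ with $\spec \mo \to S$, in whose inertia subgroup $I \subset \gal(\Kbar/K)$ the element $\sigma$ lies. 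Taking $F := \Kbar^I$ and $\mo_F := \mo \cap F$, classical valuation-theoretic Galois theory identifies $\mo_F$ with the strict henselization of $\mo \cap K$ along the chosen extension, so $\mo_F$ is strictly henselian with fraction field $F$. With $\Fbar := \Kbar$ as a separable closure of $F$, the diagram so obtained realizes $\sigma$ as an element of $\gal(\Fbar/F) = I$, hence $\sigma \in E_{(\iota,\bar\iota)} \subset E_{K/S}$.

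The main obstacles are the scheme-theoretic reformulation of ``$\sigma\bar v = \bar v$'' (so that closedness and compactness arguments apply) and, in the converse direction, ensuring that the inverse-limit valuation ring places $\sigma$ in the inertia (not merely the decomposition) of the valuation; the identification of $\mo \cap \Kbar^I$ with the strict henselization is standard and I would simply cite it from valuation theory.
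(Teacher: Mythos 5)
Your proof is correct, and the forward direction $1\Rightarrow 2$ is essentially the paper's argument: the paper also forms the compositum $E=FL$, takes the integral closure $\mo_E$ of $\mo_F$ in it (a henselian valuation ring by Bourbaki VI \S8.6), applies the valuative criterion, and uses strict henselianity of $\mo_F$ to see that the resulting closed geometric point is fixed; your explicit remarks on closedness of condition (2) and on the purely inseparable residue extension are exactly the details the paper leaves implicit. The converse $2\Rightarrow 1$ is where you genuinely diverge. The paper fixes one finite Galois extension $L$ at a time, takes the inverse limit of the fixed loci $\mv_i^\sigma$ only over the cofinal system of compactifications $\my_i$, forms the limit $\widetilde\my$ of \emph{strict localizations} at the resulting compatible geometric points, proves by hand that this is a strictly henselian valuation ring (Lemmas \ref{val ring} and \ref{sh of val ring}), and concludes that the image of $\sigma$ in $\gal(L/K)$ lies in the image of some $\gal(\Fbar/F)$ --- which suffices only because $E_{K/S}$ is defined as a \emph{closure}. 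You instead take the limit over all pairs $(\my,L)$ simultaneously, obtain a single valuation ring $\mo$ of $\Kbar$ over $S$ with $\sigma$ in its inertia group, and invoke the classical identification of the inertia field with the strict henselization to exhibit a single diagram with $\sigma\in E_{(\iota,\bar\iota)}$. This buys a slightly stronger conclusion (condition (2) puts $\sigma$ in the union itself, not merely its closure) at the cost of importing the decomposition/inertia theory of valuations and of one step you should make precise: the asserted identification of $\varprojlim_{L}\varprojlim_{\my}\mv_{\my,L}$ with a Riemann--Zariski space requires either checking that the normalizations $\mv_{\my,L}$ are cofinal among normal compactifications of $L$ over $S$, or arguing directly (as the paper does via Lemma \ref{val ring} and the standard description of extensions of a valuation to an algebraic extension) that $\varinjlim\mo_{\mv_{\my,L},v_{\my,L}}$ is a valuation ring of $\Kbar$ dominating the valuation ring $\varinjlim\mo_{\my,y}$ of $K$. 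Your compactness input (nonempty cofiltered limits of nonempty closed subsets of spectral spaces along quasi-compact transition maps) is the same as the paper's quasi-compactness argument in its Claim.
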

The proof is essentially the same as that in \cite[6.1]{V}. But we include the proof for the completeness. 
\begin{proof}
$1\Rightarrow 2$. 
Let $\sigma\in E_{K/S}$ and let $\overline\sigma\in\gal(L/K)$ be the image of $\sigma$ by $\gal(\Kbar/K)\to\gal(L/K)$. Then, $\overline\sigma$ is in the image of 
$\gal(\Fbar/F)\to\gal(\Kbar/K)\to\gal(L/K)$ 
for some diagram (\ref{diagram}). 
Let $E$ be the minimum subfield of $\Fbar$ containing $L$ and $F$ 
and let $\mo_E$ be the normalization of $\mo_F$ in $E$. 
Then, since $\mo_F$ is a henselian valuation ring, $\mo_E$ is also a henselian valuation ring by \cite[Chapitre VI \S8.6 Proposition 6]{B}. 
Then, by the valuative criterion of proper morphisms \cite[Th\'eor\`eme 7.3.8]{ega2}, there exists a unique $S$-morphism $\spec\mo_E\to\mv$ extending the composite $\spec E\to\spec L\to\mv$. 
%Here, the assumption that $S$ is excellent assures that $\mv$ is finite over $\my$, and hence, proper over $S$. 
Then, since $\mo_F$ is strictly henselian, the geometric point $\bar v$ of $\mv$ defined by the closed point of $\spec\mo_E$ is fixed by $\sigma$. 

$2\Rightarrow 1$. Let $\sigma\in\gal(\Kbar/K)$ be an element satisfying the condition 2. To show $\sigma\in E_{K/S}$, it suffices to show that for every finite Galois extension $L$ of $K$ contained in $\Kbar$, the image of $\sigma$ in $\gal(L/K)$ is in the image of $\gal(\Fbar/F)\to\gal(\Kbar/K)\to\gal(L/K)$ 
for some diagram (\ref{diagram}). 

We take an inverse system $(\my_i)_{i\in I}$, indexed by a directed set $I$, of normal compactifications of $K$ over $S$ which is cofinal in the category of normal compactifications of $K$ over $S$. 
We denote the normalization of $\my_i$ in $L$ by $\mv_i$. 
\begin{claim}
There exists a compatible system $(\bar v_i\to\mv_i)_{i\in I}$ of geometric points fixed by $\sigma$, 
that is, each $\bar v_i$ is a geometric point of $\mv_i$ fixed by $\sigma$ 
and if there is a morphism $\my_i\to\my_j$ of compactifications of $\spec K$ over $S$, 
then $\bar v_j$ is the image of $\bar v_i$ by the induced morphism $\mv_i\to\mv_j$. 
\end{claim}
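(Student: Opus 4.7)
My plan is a compactness argument in the patch (constructible) topology on the underlying spaces of the $\mv_i$. Concretely, for each $i$ I would consider the $\sigma$-fixed locus $V_i \subset |\mv_i|$, show that the $V_i$ form a filtered inverse system of non-empty patch-compact sets, conclude via Tychonoff that $\varprojlim_i V_i \neq \emptyset$, and then lift a compatible family of points to a compatible family of geometric points.

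First, condition~2 of the lemma applied to $\my_i$ and $L$ produces a $\sigma$-fixed geometric point of $\mv_i$, whose underlying scheme-theoretic point lies in $V_i$; so $V_i$ is non-empty. For $i \geq j$ in the cofinal system, the $S$-morphism of compactifications $\my_i \to \my_j$ lifts, by the functoriality of integral closure in $L$, to a $\gal(L/K)$-equivariant morphism $\mv_i \to \mv_j$, which restricts to a map $V_i \to V_j$; hence $(V_i)_{i \in I}$ is a filtered inverse system of non-empty sets. Endowing each $|\mv_i|$ with the patch topology, $|\mv_i|$ is a spectral space (since $\mv_i$ is quasi-compact and quasi-separated, being integral over the noetherian $\my_i$) and hence compact Hausdorff, with patch-continuous transition maps. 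The subset $V_i$ is patch-closed, being the equalizer of the two continuous self-maps $\sigma$ and $\mathrm{id}$ on $|\mv_i|$, so each $V_i$ is itself patch-compact. Tychonoff together with the finite intersection property for filtered inverse systems of non-empty compact sets then gives $\varprojlim_i V_i \neq \emptyset$.

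Given $(v_i) \in \varprojlim_i V_i$, I would fix an algebraic closure $\Omega$ of the directed colimit $\varinjlim_i \kappa(v_i)$; the resulting compatible embeddings $\kappa(v_i) \hookrightarrow \Omega$ define geometric points $\bar v_i \colon \spec \Omega \to \mv_i$ that are compatible under the transition maps $\mv_i \to \mv_j$ and based at the $\sigma$-fixed points $v_i$, which is the required family. The one substantive step is recognizing $V_i$ as a patch-closed subset of the spectral space $|\mv_i|$, so that Tychonoff applies uniformly; after that, the formal functoriality of the normalization and of residue fields yields the rest without further input.
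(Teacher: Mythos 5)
Your overall strategy is the same as the paper's: show that the $\sigma$-fixed locus in each $\mv_i$ is non-empty and compact in a suitable sense, deduce that the inverse limit of these loci is non-empty, and lift a compatible system of points $(v_i)$ to geometric points via a common closure of $\varinjlim_i\kappa(v_i)$. The patch-topology/Tychonoff packaging of the compactness step is a reasonable, slightly more explicit substitute for the paper's appeal to quasi-compactness.

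There is, however, one step that fails as written. You define $V_i$ as the equalizer of $\sigma$ and $\mathrm{id}$ as continuous self-maps of the topological space $|\mv_i|$, i.e.\ as the set of points $v$ with $\sigma(v)=v$. But a geometric point $\bar v\colon\spec\Omega\to\mv_i$ lying over such a $v$ need not satisfy $\sigma\bar v=\bar v$: that requires in addition that $\sigma$ act trivially on the residue field $\kappa(v)$, and the stabilizer of a point is in general strictly larger than the subgroup acting trivially on $\kappa(v)$ (an inert prime in a quadratic extension is fixed topologically by the nontrivial automorphism, which nevertheless induces Frobenius on the residue field). So an element of $\varprojlim_i V_i$ need not produce $\sigma$-fixed geometric points, and your final paragraph does not go through. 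The repair is immediate, and is exactly what the paper does: replace $V_i$ by the underlying set of the scheme-theoretic equalizer $\gamma_\sigma^*(\Delta_{\mv_i})$, the pullback of the diagonal along the graph of $\sigma$, whose points are precisely those $v$ with $\sigma(v)=v$ \emph{and} $\sigma|_{\kappa(v)}=\mathrm{id}$, equivalently those $v$ over which every geometric point is fixed by $\sigma$. This locus is Zariski-closed, hence still patch-closed, and it is non-empty by condition~2 (which supplies a fixed \emph{geometric} point, not merely a fixed topological point), so the rest of your argument then applies verbatim.
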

\begin{proof}
Let $\gamma_\sigma:\mv_i\to\mv_i\times_{\my_i}\mv_i$ be the graph of $\sigma:\mv_i\to\mv_i$. 
We define, for each $i\in I$, 
a closed subscheme $\mv_i^\sigma$ of $\mv_i$ 
to be the pullback $\gamma_\sigma^*(\Delta_{\mv_i})$, where $\Delta_{\mv_i}$ is the diagonal in $\mv_i\times_{\my_i}\mv_i$. 
Then, a point $v\in\mv_i$ is in $\mv_i^\sigma$ if and only if a geometric point over $v$ is fixed by $\sigma$. 
%the subset $\mv_i^{(\sigma)}$ consisting of elements $v\in\mv_i$ such that $\sigma\bar v=\bar v$ for a geometric point $\bar v$ over $v$, 
Here, $\mv_i^\sigma$ is nonempty by the condition 2, 
and is quasi-compact. 
%closed in $\mv_i$ by Lemma \ref{fixed locus} below. 
%Since $\mv_i$ is quasi-compact with respect to the Zariski topology, $\mv_i^{(\sigma)}$ is also quasi-compact. 
Thus, the inverse limit $\varprojlim_{i\in I}\mv_i^\sigma$ in the category of topological spaces is nonempty. 
We take an element $(v_i)_{i\in I}$ of the inverse limit $\varprojlim_{i\in I}\mv_i^\sigma$ and a separable closure $\Omega$ of the inductive limit $\varinjlim_{i\in I}\kappa(v_i)$ of the residue fields $\kappa(v_i)$. 
Then $(\spec\Omega\to\mv_i)_{i\in I}$ is a compatible system of fixed geometric points. 
\end{proof}
We denote the image of $\bar v_i$ in $\my_i$ by $\bar y_i$. 
Let $\widetilde\my$ (resp. $\widetilde\mv$) be the inverse limit $\varprojlim_{i\in I}\my_{i(\bar y_i)}$ (resp. $\varprojlim_{i\in I}\mv_{i(\bar v_i)}$) 
and let $\widetilde K$ (resp. $\widetilde L$) be the fraction field of $\widetilde\my$ (resp. $\widetilde\mv$). 
Since $\sigma$ fixes every $\bar v_i$, its image is in the image of the map $\gal(\widetilde L/\widetilde K)\to\gal(L/K)$.
Here we note that $\widetilde\my$ is the spectrum of a strictly henselian valuation ring. % the assertion follows. 
In fact, if we write $\widetilde\my=\spec\widetilde A$, 
then the natural morphism  
$\widetilde A
%\varinjlim_{i\in I}\mo_{\my_i,y_i}^{\mathrm{sh}}
\to(\varinjlim_{i\in I}\mo_{\my_i,y_i})^{\mathrm{sh}}$
is an isomorphism, 
where $y_i$ is the point of $\my_i$ lying under $\bar y_i$. 
Since $\varinjlim_{i\in I}\mo_{\my_i,y_i}$ is a valuation ring by Lemma \ref{val ring} below, $\widetilde A$ is also a valuation ring by Lemma \ref{sh of val ring} below. 
Thus, the assertion follows. 
\end{proof}

\begin{lem}\label{val ring}
Let $S$ be a noetherian scheme 
and $\spec K\to S$ be a morphism which is essentially of finite type. 
We take an inverse system $(\my_i)_{i\in I}$, indexed by a directed set $I$, of normal compactifications of $K$ over $S$ which is cofinal in the category of normal compactifications of $K$ over $S$. 
Then, for every element $(y_i)_{i\in I}$ of the set $\varprojlim_{i\in I}\my_i$, the ring $\varprojlim_{i\in I}\mo_{\my_i,y_i}$ is a valuation ring. 
\end{lem}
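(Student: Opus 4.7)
Since each local ring $\mo_{\my_i,y_i}$ is an integral domain with fraction field $K$ (as $\my_i$ is integral with generic point $\spec K$), and for $j\ge i$ the morphism $\my_j\to\my_i$ induces a local ring homomorphism $\mo_{\my_i,y_i}\to\mo_{\my_j,y_j}$ compatible with the embeddings into $K$, every transition map is an inclusion of subrings of $K$. Thus, interpreting the limit along the evident direction of the arrows,
$$
A:=\varinjlim_{i\in I}\mo_{\my_i,y_i}=\bigcup_{i\in I}\mo_{\my_i,y_i}\subseteq K
$$
is a local subring of $K$ (direct limit of local rings along local homomorphisms). To show that $A$ is a valuation ring of $K$, it suffices to verify that for every $f\in K^\times$, at least one of $f$ or $f^{-1}$ lies in some $\mo_{\my_j,y_j}$.

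Given $f\in K^\times$, I would fix an index $i_0\in I$ and consider the $K$-point $f\in\mathbb A^1(K)\subseteq\mathbb P^1(K)$ of $\mathbb P^1_S$. Paired with the generic point $\spec K\to\my_{i_0}$, this gives an $S$-morphism $\spec K\to\my_{i_0}\times_S\mathbb P^1_S$, whose schematic image $\my'$ is an integral closed subscheme. Via the projection to $\my_{i_0}$, the scheme $\my'$ is proper and birational over $\my_{i_0}$, hence proper over $S$, and via the second projection it carries a morphism $\pi\colon\my'\to\mathbb P^1_S$ extending the chosen $K$-point. Replacing $\my'$ by its normalization $\widetilde{\my'}$, which is again a normal integral scheme proper over $S$ in the relevant setting, yields a normal compactification of $K$ over $S$ dominating $\my_{i_0}$.

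By cofinality of $(\my_i)_{i\in I}$ in the category of normal compactifications of $K$ over $S$, there exists $j\ge i_0$ equipped with an $S$-morphism $\my_j\to\widetilde{\my'}$. Composing with $\pi$ produces a morphism $\my_j\to\mathbb P^1_S$ whose value at the generic point is $f$. The image of $y_j$ lies in one of the two standard affine charts $D_+(u)=\spec\mo_S[v/u]$ or $D_+(v)=\spec\mo_S[u/v]$ of $\mathbb P^1_S$: in the first case $f=v/u\in\mo_{\my_j,y_j}\subseteq A$, and in the second case $f^{-1}=u/v\in\mo_{\my_j,y_j}\subseteq A$. This gives the required dichotomy, and the fraction field of $A$ is automatically $K$.

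The only genuinely delicate step I foresee is confirming that $\widetilde{\my'}\to S$ is still of finite type, hence proper, so as to qualify as a normal compactification in the sense of Definition \ref{cptfn}. For a purely noetherian $S$ the normalization of a finite-type scheme need not be finite, but under the additional hypotheses that arise in the paper's applications (where $S$ is excellent, or at least Nagata) the normalization is finite and the verification is automatic. With that point dispatched, the proof is essentially just the standard graph-closure trick applied inside a cofinal system of normal compactifications.
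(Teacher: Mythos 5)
Your proof is correct and is essentially the paper's own argument: the paper fixes $i_0$, writes $x=f/g$ with $f,g$ regular on an affine neighborhood, and passes to an $\my_j$ dominating the blowup of $\my_{i_0}$ along the ideal $(f,g)$ --- which is precisely the graph closure of $[f:g]\colon\spec K\to\mathbb P^1$ that you construct --- so that one of $f/g$, $g/f$ becomes regular at $y_j$. The normalization-finiteness point you flag is equally implicit in the paper's proof (cofinality is only asserted among \emph{normal} compactifications, so some normal compactification must dominate the blowup), and it is harmless under the excellence hypotheses in force in the paper's applications.
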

This seems to be well known, but the author could not find a suitable reference. So we include a proof. 

Note that the assumption that $S$ is noetherian assures the existence of a compactification, by Nagata's compactification theorem. 
\begin{proof}
We put $O=\varprojlim_{i\in I}\mo_{\my_i,y_i}$
We check that for any $x\in K$, we have either $x\in O$ or $x^{-1}\in O$. 
We fix an index $i_0\in I$. 
We may assume that $\my_{i_0}$ is affine; $\my_{i_0}=\spec A$. 
Then, any $x\in K$ can be written as $x=f/g$ for some $f,g\neq0\in A$. 
We take an index $i\in I$ such that $\my_i$ dominates the blow up of $\my_{i_0}$ along the ideal $I$ of $A$ generated by $f$ and $g$. Then, either $f/g$ or $g/f$ belongs to 
$\mo_{\my_i,y_i}$, and hence, the assertion follows. 
\end{proof}

%\begin{lem}\label{fixed locus}
%Let $X$ be a scheme with an admissible action of a finite group $G$. 
%Let $X^{(G)}$ be the subset of $X$ consisting of elements $x\in X$ such that $g\bar x=\bar x$ for every $g\in G$ and for some geometric point $\bar x$ over $x$. 
%Then, $X^{(G)}$ is a closed subset of $X$.
%\end{lem}
%\begin{proof}
%Let $x\in X$. We consider a subgroup $I_x$ of $G$ consisting of elements $g$ such that $g\bar x=\bar x$ for some geometric point $\bar x$ over $x$. 
%Then, $x\in X^{(G)}$ if and only if for every proper subgroup $H$ of $G$, we have $I_x\not\subset H$. 
%Here, the condition $I_x\subset H$ is equivalent to the condition that $X/H\to X/G$ is \'etale at the image of $x$. 
%Since the \'etale locus of a morphism of schemes of finite presentation is open \cite[Proposition 2.3.6]{F}, the assertion follows. 
%\end{proof}
\begin{lem}\label{sh of val ring}
Let $A$ be a valuation ring with residue field $k$. 
Let $k^{\mathrm{sep}}$ be a separable closure of $k$ 
and $A^{\mathrm{sh}}$ be the strict henselization of $A$ with respect to $k\to k^{\mathrm{sep}}$. 
Then, $A^{\mathrm{sh}}$ is also a valuation ring. 
\end{lem}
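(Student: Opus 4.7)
The plan is to realize $A^{\mathrm{sh}}$ as a filtered union of valuation rings inside a common field, after which the conclusion is immediate: if $x$ lies in the union, it already lies in one of the rings of the system, which is a valuation ring, so $x$ or $x^{-1}$ is in that ring, hence in the union.

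Recall that $A^{\mathrm{sh}}=\varinjlim_{(B,\mathfrak{q})}B_{\mathfrak{q}}$, the filtered colimit ranging over pointed \'etale neighborhoods $(B,\mathfrak{q})$, i.e.\ \'etale $A$-algebras $B$ of finite type equipped with a prime $\mathfrak{q}$ lying over $\mathfrak{m}_A$ and a compatible embedding $\kappa(\mathfrak{q})\hookrightarrow k^{\mathrm{sep}}$. All these sit compatibly inside the fraction field of $A^{\mathrm{sh}}$, so it suffices to show that each $B_{\mathfrak{q}}$ is a valuation ring of its own fraction field.

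Since $B$ is \'etale of finite type over $A$, it is quasi-finite; by Zariski's Main Theorem, $B_{\mathfrak{q}}=\widetilde B_{\widetilde\mathfrak{q}}$ for some finite $A$-subalgebra $\widetilde B\subseteq B_{\mathfrak{q}}$, where $\widetilde\mathfrak{q}=\mathfrak{q}\cap\widetilde B$. Let $L$ be the fraction field of $B_{\mathfrak{q}}$, a finite separable extension of $K=\mathrm{Frac}(A)$, and let $A_L$ denote the integral closure of $A$ in $L$; then $\widetilde B\subseteq A_L$. By the classical extension theory of valuations (Bourbaki, Commutative Algebra, Ch.~VI, \S8), $A_L$ is a Pr\"ufer domain whose localization at any prime over $\mathfrak{m}_A$ is a valuation ring of $L$ dominating $A$. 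Picking a prime $\mathfrak{Q}$ of $A_L$ lying over $\widetilde\mathfrak{q}$, we obtain an integral inclusion $B_{\mathfrak{q}}\subseteq A_{L,\mathfrak{Q}}$. But \'etale extensions of normal rings are normal, so $B_{\mathfrak{q}}$ is already integrally closed in $L$, which forces $B_{\mathfrak{q}}=A_{L,\mathfrak{Q}}$. Hence $B_{\mathfrak{q}}$ is a valuation ring.

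The main delicacy is the use of Zariski's Main Theorem when $A$ is not assumed Noetherian (valuation rings with non-finitely-generated value groups are not): one must appeal to the general version of ZMT, valid for any finite-type algebra at a quasi-finite point. Once this is granted, the remaining steps reduce to a direct application of Bourbaki's theory of extensions of valuations to algebraic extensions.
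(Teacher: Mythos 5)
Your proof is correct and follows essentially the same route as the paper's: both realize $A^{\mathrm{sh}}$ as a filtered colimit of (essentially) \'etale local $A$-algebras, invoke Zariski's Main Theorem to present each term as a localization of a finite $A$-algebra, and conclude via Bourbaki's theory of extensions of valuations to algebraic field extensions (Ch.~VI, \S 8). The only cosmetic difference is that the paper arranges the finite algebra to be normal from the outset, whereas you compare $B_{\mathfrak q}$ with the integral closure $A_L$ afterwards.
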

\begin{proof}
By the definition of strict henselization \cite[D\'efinition 16.8.7]{ega4}, $A^{\mathrm{sh}}=\varinjlim A'$, where the limit is taken over all essentially \'etale local $A$-algebras $A'$ with an $A$-morphism $A'\to k^{\mathrm{sep}}$. 
Thus, it suffices to show that every essentially \'etale local $A$-algebra $A'$ is also a valuation ring. 
Using Zariski's main theorem, we can take a finite $A$-algebra $B$ which is normal, a maximal ideal $\mathfrak m$ of $B$ at which $A\to B$ is \'etale, and an $A$-isomorphism $B_\mathfrak m\cong A'$. 
By \cite[Chapitre VI \S8.6 Proposition 6]{B}, $B_\mathfrak m$ is a valuation ring. 
\end{proof}

Finally, we mention that Vidal's ramified part defined here recovers Vidal's original one. 
We assume that $S$ is an excellent trait. 
Let $Y$ be an $S$-scheme separated of finite type which is normal and connected. 
Let $K$ be the function field of $Y$. 
We take a separable closure $\Kbar$ of $K$. 
Let $E_{K/S}(Y)$ be the subset of $\pi_1(Y,\spec\Kbar)$ which Vidal calls ``{\it la partie g\'en\'eriquement ramifi\'ee}'' in \cite[1.2]{V}. 
By Gabber's valuative criterion \cite[6.3]{V}, we see the following.  
\begin{lem}\label{Vidal original}
The natural surjection $\gal(\Kbar/K)\to\pi_1(Y,\spec\Kbar)$ induces a surjection $E_{K/S}\to E_{K/S}(Y)$.  
\end{lem}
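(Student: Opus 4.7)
The map $\gal(\Kbar/K) \to \pi_1(Y,\spec\Kbar)$ is surjective because $Y$ is normal and connected, so my plan is to show that every element of $E_{K/S}(Y)$ is the image of some element of $E_{K/S}$. The whole argument is really a definition-chase against Gabber's valuative criterion, which is the only non-trivial input.

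The key reading of \cite[6.3]{V} I would use is the following valuative characterization of Vidal's original ramified part: an element $\tilde\sigma \in \pi_1(Y,\spec\Kbar)$ lies in $E_{K/S}(Y)$ precisely when it arises from $\gal(\Fbar/F)$ via a commutative diagram
\[
\begin{xy}
\xymatrix{
\spec\Fbar \ar[r]\ar[d] & \spec\Kbar \ar[d]\\
\spec F \ar[r]\ar[d] & \spec K \ar[d]\\
\spec\mo_F \ar[r] & Y,
}
\end{xy}
\]
with $\mo_F$ a strictly henselian valuation ring, $F$ its fraction field, and $\Fbar$ a separable closure of $F$ (up to the standard reduction to separable closures via Lemma \ref{sh of val ring}).

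Granted this, surjectivity is immediate. Given such a diagram witnessing $\tilde\sigma \in E_{K/S}(Y)$, I would compose its bottom arrow with the structural morphism $Y \to S$ to obtain a diagram of the form (\ref{diagram}) in Definition \ref{Vidal}. Hence the image of $\gal(\Fbar/F) \to \gal(\Kbar/K)$ lies in $E_{(\iota,\bar\iota)} \subseteq E_{K/S}$, and the element of $\gal(\Kbar/K)$ witnessed by $\tilde\sigma$ belongs to $E_{K/S}$ and maps to $\tilde\sigma$ in $\pi_1(Y,\spec\Kbar)$.

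The main obstacle I expect is purely bookkeeping: aligning Vidal's original definition of $E_{K/S}(Y)$ from \cite[1.2]{V} with the valuative criterion \cite[6.3]{V} in the precise form above—ensuring that strictly henselian valuation rings and separable (as opposed to algebraic) closures match Definition \ref{Vidal}, and verifying that the closure operations used in the two definitions are compatible with the continuous surjection $\gal(\Kbar/K) \to \pi_1(Y,\spec\Kbar)$, so that passing from the union of the subsets $E_{(\iota,\bar\iota)}$ to its closure on the source is harmless when projecting to the quotient.
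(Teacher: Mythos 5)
Your overall strategy---reduce everything to Gabber's valuative criterion \cite[6.3]{V} and then chase definitions---is exactly what the paper does: its entire proof is the citation of that criterion. The surjectivity half of your argument is fine: a diagram witnessing membership in $E_{K/S}(Y)$ yields, after composing with $Y\to S$, a diagram of the form (\ref{diagram}), hence an element of some $E_{(\iota,\bar\iota)}\subseteq E_{K/S}$ lifting the given element. The closure bookkeeping you defer is also genuinely harmless: $\gal(\Kbar/K)\to\pi_1(Y,\spec\Kbar)$ is a continuous surjection of profinite (compact) groups, hence a closed map, so the image of the closure of $\bigcup E_{(\iota,\bar\iota)}$ equals the closure of the image of the union.

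The gap is that you prove only half of the statement. ``Induces a surjection $E_{K/S}\to E_{K/S}(Y)$'' also asserts that the image of $E_{K/S}$ is \emph{contained} in $E_{K/S}(Y)$, i.e., that the induced map exists at all, and you never address this. Moreover, under the reading of \cite[6.3]{V} you propose---where all of $\spec\mo_F$ maps to $Y$---this containment does not follow from your composition trick: a diagram of type (\ref{diagram}) only provides $\spec\mo_F\to S$, and since $Y$ is merely separated of finite type (not proper) over $S$, there is no reason for the valuation to be centered on $Y$. The form of the criterion that makes both inclusions immediate is the one parallel to \cite[6.1]{V} as recalled in the remark following Lemma \ref{Vidal original}: $\spec\mo_F\to S$, with only the generic point $\spec F$ required to land on $\spec K$. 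With that reading, $E_{K/S}(Y)$ is the closure of the union of the images in $\pi_1(Y,\spec\Kbar)$ of precisely the sets $E_{(\iota,\bar\iota)}$ of Definition \ref{Vidal}, and both the containment and the surjectivity become tautological. You should either verify that \cite[6.3]{V} is stated in this form, or supply a separate argument for the containment direction.
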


\begin{rem}
In \cite[1.2]{V}, a subset $E_{Y/S}$ of $\pi_1(Y,\spec\Kbar)$, which she calls ``{\it la partie ramifi\'ee}'', is defined. 
This subset contains the subset $E_{K/S}(Y)$. By Gabber's valuative criterion \cite[6.1]{V}, it coincides with the subset defined by replacing $\spec K$ by $Y$ in Definition \ref{Vidal}. 
In the case $Y$ is regular, we have $E_{Y/S}=E_{K/S}(Y)$ \cite[6.4]{V}, but we do not know whether this equality holds or not in the general case, i.e., for a normal $Y$.  
\end{rem}

\section{Alterations and nodal fibrations}\label{nodal}
We introduce some terminologies used in the proof of the main theorem. 

\begin{defn}[{\cite[7.1]{dJ}}]
Let $X$ be a regular scheme with an action of a finite group $G$ 
and $D$ a divisor of $X$ with simple normal crossings which is $G$-stable. 
We say that $D$ is a divisor with {\it $G$-strict normal crossings} 
if the $G$-orbit of every irreducible component of $D$ is a disjoint union of irreducible components of $D$. 
\end{defn}

\begin{defn}
%Let $f$ be a flat morphism of schemes of finite presentation. 
Let %$G$ be a finite group and 
$f:\mathcal X\to\mathcal Y$ be a %$G$-equivariant 
morphism of schemes which is flat and of finite presentation. 
\begin{enumerate}
\item \cite[2.21]{dJ} We say $f$ is a {\it nodal curve} if every geometric fiber of $f$ is a connected curve whose singularities are at most  ordinary double points. 
\item \cite[4.4.1]{V1} (cf. 
\cite[2.22]{dJ})
Let $G$ be a finite group acting on $\mx$ and $\my$ so that $f:\mathcal X\to\mathcal Y$ is $G$-equivariant. 
We say $f$ is 
a $G$-{\it split nodal curve} 
if $f$ is a nodal curve and if for every point $y=\spec\kappa(y)$ of $\my$, 
\begin{enumerate}
\item every singular point of $\mx\times_\my y$ is rational over $\kappa(y)$, 
\item every irreducible component of $\mx\times_\my y$ is smooth over $\kappa(y)$, 
\item the $G$-orbit of any irreducible component of $\mx\times_\my y$ is a disjoint union of irreducible components of fibers of $f:\mx\to\my$ (not necessarily the fiber $\mx\times_\my y$). 
\end{enumerate}
%	\item \cite[5.8]{dJ2} We say $f:\mathcal X\to\mathcal Y$ is a %$G$-
%	{\it split nodal fibration} if it can be written as the composite 
%	$$\mathcal X=\mathcal X_d\to\mathcal X_{d-1}\to\cdots\to\mathcal X_0=\mathcal Y$$
%	such that each $\mathcal X_i\to\mathcal X_{i-1}$ is a %$G$-
%	split nodal curve. 
\end{enumerate}
\end{defn}
Note that any base change of a nodal curve is also a nodal curve and that any base change of a $G$-split nodal curve by any $G$-equivariant morphism is also a $G$-split nodal curve.

\begin{defn}\label{datum}
Let $G$ be a finite group. 
\begin{enumerate}
\item
A {\it $G$-split nodal fibration datum} is a following datum: 
\begin{enumerate}
\item a sequence of $G$-split nodal curves $f_i:\mathcal X_i\to\mathcal X_{i-1}$ ($i=1,\ldots,d$); 
$$
\mathcal X_d
\to
\mathcal X_{d-1}
\to
\cdots
\to
\mathcal X_1
\to\mathcal X_0,
$$
\item a $G$-stable proper closed subset $Z_0$ of $\mathcal X_0$, 
\item finitely many disjoint sections $\sigma_{ij}:\mathcal X_{i-1}\to\mathcal X_i$ ($i=1,\ldots,d$) into the smooth locus of $f_i:\mathcal X_i\to\mathcal X_{i-1}$ 
which are permuted by $G$. 
\end{enumerate}
satisfying the following condition; 
if we define for $i=1,\ldots,d$ a closed subset $Z_i$ of $\mathcal X_i$ inductively by 
$$
Z_i=f_i^{-1}(Z_{i-1})\cup\bigcup_j\sigma_{ij}(\mathcal X_{i-1}),
$$ 
the nodal curve $f_i:\mathcal X_{i}\to\mathcal X_{i-1}$ is smooth over $\mathcal X_{i-1}\setminus Z_{i-1}$ for every $i$. 
\item
We say a $G$-split nodal fibration datum 
$(\mathcal X_d\to\mathcal X_{d-1}\to\cdots\to\mathcal X_1\to\mathcal X_0,Z_0,\{\sigma_{ij}\})$ 
is {\it strictly $G$-split} if 
$\mx_i$ is regular and 
the closed subscheme $Z_i$ defined as above is a divisor with $G$-strict normal crossings 
for every $i=0,\ldots,d$. 
\end{enumerate}
\end{defn}

\begin{defn}\label{fib}
A {\it $G$-split nodal fibration} (resp. {\it strictly $G$-split nodal fibration}) is a $G$-equivariant morphism $f:\mx\to\my$ of excellent schemes
such that there exists a $G$-split nodal fibration datum (resp. strictly $G$-split nodal fibration datum) 
$$
(
\mx=
\mathcal X_d
\to
\mathcal X_{d-1}
\to
\cdots
\to
\mathcal X_1
\to\mathcal X_0
=\my,
Z_0,
\{\sigma_{ij}\}_{i,j}
)
$$
such that 
the composite 
$
\mx=
\mathcal X_d
\to
\mathcal X_{d-1}
\to
\cdots
\to
\mathcal X_1
\to\mathcal X_0
=\my
$
coincides with $f$.
\end{defn}
%\begin{defn}\label{strict}
%\end{defn}
We note that any base change of a $G$-split nodal fibration by any $G$-equivariant morphism is also a $G$-split nodal fibration. 

When the group $G$ is trivial, we omit $G$ from these terminologies; 
for example, a $G$-split nodal fibration for trivial $G$ will be simply called {\it a split nodal fibration}. 

%\'Etale locally nodal curves are concretely described: 

%We give an elementary property of nodal fibrations: 
%\begin{lem}
%Let 
%$$(\mathcal X_d\to\mathcal X_{d-1}\to\cdots\to\mathcal X_1\to\mathcal X_0,Z_0,\{\sigma_{ij}\})$$
%be a strictly split nodal curve 
%and $Z_i$ be divisors defined as in Definition \ref{datum}. 
%We write $Z_0$ and $Z_d$ as the union of their irreducible components; $Z_0=\bigcup_{j\in J}E_j$ and $Z_d=\bigcup_{i\in I}D_i$. 
%For a subset $J'$ of $J$ (resp. $I'$ of $I$) we put $E_{J'}=\bigcap_{j\in J'}E_j$ (resp. $D_{I'}=\bigcap_{i\in I'}D_i$). 
%Let $I_{\mathrm{ver}}$ be the subset of $I$ consisting of elements $i$ such that the image of $D_i$ in $\mx_0$ is contained in $Z_0$. 
%Then, for every subset $I'$ of $I$ such that $I'\cap I_{\mathrm{ver}}\neq\emptyset$, the morphism $D_{I'}\to E_{\overline{I'}}$ is smooth, where $\overline{I'}$ is the subset of $J$ consisting of elements $j$ such that $E_j$ contains the image of $D_{I'}$ in $\mx_0$. 
%\end{lem}
%\begin{proof}
%By induction, we may assume $d=1$. The problem is \'etale local, we may assume that 
%
%\end{proof}

Recall that an alteration is a proper generically finite surjection of integral noetherian schemes \cite[2.20]{dJ}. 
We use a following refinement Lemma \ref{alt} of de\ Jong's theorem \cite[Theorem 5.9]{dJ2} due to Gabber, 
to reduce the proof of our main theorem to the case where we can take a nodal fibration as an ``integral model''. 
\begin{defn}[{\cite[5.3]{dJ2}}]
%[c.f. proof of {\cite[Proposition 4.4.1]{V1}}]
Let $\mx$ (resp.\ $\mx'$) be an integral noetherian scheme with an action of a finite group $G$ (resp.\ $G'$). 
A {\it Galois alteration} $(\phi,\alpha):(\mx',G')\to(\mx,G)$ is a pair of 
a surjective homomorphism $\alpha:G'\to G$ of finite groups 
and an alteration $\phi:\mx'\to\mx$ which is $G'$-equivariant via $\alpha$ 
such that, 
writing $\Gamma$ for the kernel of the natural map $G'\to\Aut(X)$, the fixed part $K(\mx')^\Gamma$ of the function field $K(\mx')$ of $\mx'$ is purely inseparable over the function field $K(\mx)$ of $\mx$. 
\end{defn}
%Note that this definition is stronger than that in \cite[5.3]{dJ2} as mentioned in \cite{V1}. 

\begin{lem}[{\text{\rm\cite[Lemme 4.4.3]{V1}}}]
\label{alt}
Let $\mx$ and $\my$ be integral excellent noetherian schemes with an action of a finite group $G$ 
and $f:\mx\to\my$ a $G$-equivariant morphism separated and of finite type. 
Let $Z$ be a proper closed subset of $\mx$. 
Assume that the geometric generic fiber of $f$ is irreducible of dimension $d$. 
Then, there exist 
\begin{itemize}
\item a surjective homomorphism $\alpha:G'\to G$ of finite groups
\item projective Galois alterations $(\psi,\alpha):(\mathcal Y',G')\to(\mathcal Y,G)$ and $(\phi,\alpha):(\mathcal X',G')\to(\mx,G)$, 
\item a $G'$-split nodal fibration datum $($Definition \ref{datum}$)$
$$
(
\mathcal X_d
\to
\mathcal X_{d-1}
\to
\cdots
\to
\mathcal X_1
\to\mathcal X_0,
Z_0,\{\sigma_{ij}\})
$$
with $\mathcal X_d=\mathcal X'$ and $\mathcal X_0=\mathcal Y'$, 
\end{itemize}
such that 
$Z_d$ defined as in Definition \ref{datum} contains the pullback $\phi^{-1}(Z)$ 
and that 
the following diagram commutes: 
$$
\begin{xy}
\xymatrix{
\mathcal X'\ar[r]\ar[d]&\mathcal X\ar[d]^f\\
\mathcal Y'\ar[r]&\mathcal Y
}
\end{xy}
$$

\end{lem}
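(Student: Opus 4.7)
The plan is to proceed by induction on $d$, the dimension of the geometric generic fiber of $f$. When $d = 0$, the morphism $f$ is generically finite: one picks a Galois closure of $K(\mx)/K(\my)$, sets $G'$ so as to surject onto $G$ and onto the relevant Galois group, and normalizes $\my$ and $\mx$ in appropriate subfields to obtain projective Galois alterations $\my' \to \my$ and $\mx' \to \mx$ with $\mx' = \my'$. The nodal fibration datum is then vacuous of length zero, and $Z_0$ is chosen to contain the image of $Z$.

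For the inductive step, I would first apply the equivariant refinement of de Jong's alteration theorem due to Gabber to the single morphism $f:\mx \to \my$ together with the closed subset $Z$. This produces a surjection $\alpha_1 : G_1 \to G$ of finite groups, a projective Galois alteration $(\phi_1, \alpha_1) : (\mx_1, G_1) \to (\mx, G)$, an integral $G_1$-scheme $\widetilde{\my}$ equipped with a $G_1$-equivariant morphism to $\my$ whose geometric generic fiber is irreducible of dimension $d-1$, a $G_1$-split nodal curve $\pi : \mx_1 \to \widetilde{\my}$, finitely many pairwise disjoint sections $\sigma_{dj} : \widetilde{\my} \to \mx_1$ that are permuted by $G_1$ and land in the smooth locus of $\pi$, and a $G_1$-stable proper closed subset $\widetilde{Z} \subset \widetilde{\my}$ outside which $\pi$ is smooth and such that $\phi_1^{-1}(Z) \subset \pi^{-1}(\widetilde{Z}) \cup \bigcup_j \sigma_{dj}(\widetilde{\my})$. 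I would then apply the inductive hypothesis to the $G_1$-equivariant morphism $\widetilde{\my} \to \my$ with closed subset $\widetilde{Z}$; this yields a surjection $\alpha : G' \to G_1$, projective Galois alterations $\my' \to \my$ and $\widetilde{\my}' \to \widetilde{\my}$, and a $G'$-split nodal fibration datum of length $d-1$ on $\widetilde{\my}' \to \my'$ whose associated $Z_{d-1}$ contains the pullback of $\widetilde{Z}$. Setting $\mx' := \mx_1 \times_{\widetilde{\my}} \widetilde{\my}'$ and base-changing $\pi$ and the sections $\sigma_{dj}$ along $\widetilde{\my}' \to \widetilde{\my}$ produces a $G'$-split nodal curve on top, extending the length $d-1$ datum on the base to a length $d$ datum; the inclusion $Z_d \supset \phi^{-1}(Z)$ then follows from the construction of $\widetilde{Z}$ and the $\sigma_{dj}$, together with compatibility of the pullbacks.

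The main obstacle is the single-step equivariant nodal curve result that initiates the inductive step: producing an equivariant alteration $(\mx_1, G_1) \to (\mx, G)$ which presents $\mx_1$ as a $G_1$-split nodal curve over a lower-dimensional base, with sections honestly permuted by $G_1$ (not merely individually fixed), requires enlarging $G$ to $G_1$ along a Galois-type construction inside Gabber's refinement of \cite[Theorem 5.9]{dJ2}. This is the content of \cite[Lemme 4.4.3]{V1} in its single-step form. Once this step is granted, the recursion is essentially formal, since both being a $G'$-split nodal curve and admitting a compatible system of disjoint $G'$-permuted sections into the smooth locus are preserved under $G'$-equivariant base change.
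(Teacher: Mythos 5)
The paper offers no proof of this lemma: it is imported wholesale from Vidal \cite[Lemme 4.4.3]{V1} (which in turn rests on Gabber's equivariant refinement of de Jong's \cite[Theorem 5.9]{dJ2}), so there is no in-paper argument to compare yours against. Your inductive skeleton is the standard one --- peel off the top nodal curve $\mx_1\to\widetilde\my$, recurse on $\widetilde\my\to\my$ with the discriminant $\widetilde Z$, and base-change the top curve along the alteration of the base --- and this is indeed how de Jong organizes the passage from a single curve fibration to a pluri-nodal fibration, and how Vidal's equivariant version is structured.

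The genuine gap is that your ``main obstacle'' paragraph defers the entire mathematical content of the lemma to ``\cite[Lemme 4.4.3]{V1} in its single-step form,'' which is essentially the statement you are proving. All of the work lives in that single step: producing, equivariantly, an alteration after which the family becomes a \emph{split} nodal curve (rational singular points, smooth irreducible components) with finitely many \emph{disjoint} sections into the smooth locus that are genuinely permuted by the enlarged group $G_1$, while ensuring that $(\mx_1,G_1)\to(\mx,G)$ is a Galois alteration, i.e.\ that $K(\mx_1)^{\Gamma}$ is purely inseparable over $K(\mx)$ for $\Gamma=\ker(G_1\to\Aut(\mx_1))$. This requires the stable-curve/three-point machinery of \cite{dJ2} together with a Galois-closure construction, and cannot be quoted as a black box without circularity. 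Beyond that, the recursion itself needs two verifications you pass over: that $\mx':=\mx_1\times_{\widetilde\my}\widetilde\my'$ is again integral (this holds because the base change is flat with reduced fibers over an integral base and has irreducible generic fiber, but it must be said), and that the composite $\mx'\to\mx_1\to\mx$ is again a \emph{projective Galois} alteration with respect to $G'\to G$, which again involves tracking the purely-inseparable condition through the composition. With the single-step theorem granted and these checks supplied, the induction does close up as you describe.
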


\begin{lem}\label{blowup of fib}
Let 
$$(\mathcal X_d\to\mathcal X_{d-1}\to\cdots\to\mathcal X_1\to\mathcal X_0,Z_0,\{\sigma_{ij}\})$$
be a $G$-split nodal fibration datum such that $\mx_0$ is regular and $Z_0$ is a divisor with $G$-strict normal crossings. 
Then, there exist a sequence of $G$-equivariant blowups 
\begin{eqnarray*}
\mx'_1\to&\mx_1,\\
\mx'_2\to&\mx_2\times_{\mx_1}\mx'_1,\\
\cdots&\\
\mx'_d\to&\mx_d\times_{\mx_{d-1}}\mx'_{d-1}
\end{eqnarray*}
%$\mx'_i\to\mx_i$ for $i=1,\ldots,d$ 
%and a commutative diagram 
%$$
%\begin{xy}
%\xymatrix{
%\mx'_d\ar[r]\ar[d]&\cdots\ar[r]&\mx'_1\ar[r]\ar[d]&\mx_0\ar@{=}[d]\\
%\mx_d\ar[r]&\cdots\ar[r]&\mx_1\ar[r]&\mx_0}
%\end{xy}
%$$
such that 
the center of $\mx'_i\to\mx_i\times_{\mx_{i-1}}\mx'_{i-1}$ is outside the smooth locus of $\mx_i\times_{\mx_{i-1}}\mx'_{i-1}\to\mx'_{i-1}$ 
and that 
$$(\mx'_d\to\mx'_{d-1}\to\cdots\to\mx'_1\to\mx_0,Z_0,\{\sigma_{ij}\})$$ 
is a strictly $G$-split nodal fibration datum. 
\end{lem}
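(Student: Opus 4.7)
My plan is to construct the $\mathcal X'_i$ inductively, with $\mathcal X'_0=\mx_0$ as the base case (which already satisfies the required regularity and $G$-strict normal crossings property by hypothesis). For the inductive step, I would assume $\mathcal X'_{i-1}$ is regular and its divisor $Z'_{i-1}$ is a $G$-strict normal crossings divisor, and consider the pullback $\mathcal W_i := \mx_i\times_{\mx_{i-1}}\mathcal X'_{i-1}\to\mathcal X'_{i-1}$. Since nodal curves are stable under base change, this is still a $G$-split nodal curve, the pulled-back sections land in its smooth locus, and any failure of regularity of $\mathcal W_i$ occurs at fiber nodes. Such nodes lie above the locus where $f_i$ is not smooth, which by definition of a nodal fibration datum is contained in $Z_{i-1}$, and hence above $Z'_{i-1}$.

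Near a node $w\in\mathcal W_i$, \'etale-locally $\mathcal W_i$ takes the form $\mathrm{Spec}\bigl(A[x,y]/(xy-a)\bigr)$, where $A$ is an \'etale local chart of $\mathcal X'_{i-1}$ at $z=f_i(w)$ and $a\in\mathfrak m_z$ vanishes on (part of) $Z'_{i-1}$. Since $Z'_{i-1}$ is a normal crossings divisor, one can write $a=u\cdot t_1^{n_1}\cdots t_r^{n_r}$ with $u$ a unit and $t_1,\ldots,t_r$ part of a regular system of parameters of $A$ cutting out the branches of $Z'_{i-1}$ through $z$. This is a standard toric singularity admitting an explicit resolution by a sequence of blowups with regular centers such that the total transform of $Z'_{i-1}$ together with the new exceptional divisors forms a strict normal crossings divisor. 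Because each center lies in the singular locus of $\mathcal W_i$ or its strict transforms, it sits above $Z'_{i-1}$ and is automatically contained in the non-smooth locus of the morphism to $\mathcal X'_{i-1}$, so the center condition required in the statement holds for free.

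To implement this $G$-equivariantly, I would blow up $G$-orbits of nodes together: the $G$-split assumption guarantees that $G$ permutes the nodes in a manner compatible with the permutation of fiber components, so the reduced subscheme supported on a $G$-orbit of nodes serves as an equivariant center, and the local toric resolutions at distinct nodes in one orbit are conjugate under $G$ and assemble into a single equivariant blowup. The main obstacle will be checking the $G$-strict (not merely strict) normal crossings property of the resulting $Z'_i$: one must verify that the $G$-orbit of each new exceptional component is a disjoint union of components, which follows from the fact that nodes in a single $G$-orbit lie in disjoint \'etale neighborhoods---a consequence of the $G$-split hypothesis on fiber components combined with the disjointness of the sections. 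The remaining verifications---regularity of $\mathcal X'_i$, disjointness of the lifted sections and their landing in the smooth locus of $\mathcal X'_i\to\mathcal X'_{i-1}$, and the decomposition of $Z'_i=f_i'^{-1}(Z'_{i-1})\cup\bigcup_j\sigma_{ij}(\mathcal X'_{i-1})$ as a $G$-strict NCD---are then routine, inherited from the toric model.
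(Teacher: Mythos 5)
Your inductive skeleton coincides with the paper's: the proof in the text is an induction on $d$ whose entire mathematical content is the relative-curve case, which the paper does not prove but quotes as \cite[Lemme 4.4.4]{V1}. The difference is that you attempt to supply that case yourself by a local toric computation, and that is exactly where your sketch has a genuine gap.

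The step that fails as stated is the claim that the center condition ``holds for free'' because your centers lie in the singular locus of $\mathcal W_i$, hence above $Z'_{i-1}$, hence in the non-smooth locus of $\mathcal W_i\to\mx'_{i-1}$. Quite apart from the fact that the singular locus of the total space and the non-smooth locus of the morphism are different (e.g.\ $xy=t$ over a regular base is a regular scheme with nodal fibres), the real difficulty appears as soon as $a$ has $r\geq 2$ branches. For $xy=t_1t_2$, the resolutions whose centers are contained in the node locus $V(x,y)$ do not preserve the fibration: blowing up the origin, or the component $V(x,y,t_1)$ of the node locus, inserts a two-dimensional exceptional component into the fibre over $t_1=t_2=0$, destroying flatness and the nodal-curve property. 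The fibration-compatible (small) resolution instead uses centers such as $V(x,t_1)$, a component of a special fibre, which is \emph{not} contained in the non-smooth locus as a subscheme (only the non-isomorphism locus of that blowup is). So the choice of centers is the whole point, and both the center condition and the preservation of the $G$-split nodal-curve structure must be verified for a specific, carefully chosen sequence of ideals; they are not formal consequences of ``standard toric resolution.'' A second, related gap: you produce centers \'etale-locally at individual nodes and propose to ``assemble'' them, but the non-smooth locus is a positive-dimensional closed subscheme, blowups do not glue from \'etale-local data unless the ideal sheaves are canonical and $G$-stable, and the $G$-strictness of the new exceptional components is part of what must be arranged globally. All of this is precisely the content of \cite[Lemme 4.4.4]{V1}; either cite it, as the paper does, or carry out its computation.
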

%This is an easy consequence of the following refinement of \cite[Lemma 5.11]{dJ} due to Gabber: 
%\begin{lem}[{\cite[Lemme 4.4.4]{V1}}]\label{blowup}
%Let $f:\mx\to\my$ be a $G$-split nodal curve with $\my$ regular. 
%Let $Z$ be a divisor of $\my$ with $G$-strict normal crossings. 
%Let $\sigma_1,\ldots,\sigma_r$ be disjoint sections $\my\to\mx$ into smooth locus which are permuted by $G$. 
%Assume that $f$ is smooth over $\my\setminus Z$. 
%Then, there exists a $G$-equivariant blowup $\mx'\to\mx$ with center outside the smooth locus of $f$ 
%such that $\mx'$ is regular, 
%that ${f'}^{-1}(Z)\cup\sigma_1(\my)\cup\cdots\cup\sigma_r(\my)$ is a divisor of $\mx'$ with $G$-strict normal crossings, 
%and that $\mx'\to\my$ is a $G$-split nodal curve. 
%\end{lem}

\begin{proof}
The $d=1$ case is proved in \cite[Lemme 4.4.4]{V1}. 
We argue by induction on the relative dimension $d$. 
If $d=0$, we have nothing to do. 
We assume that the assertion holds for $G$-split nodal fibration data of relative dimension $d-1$. 
Then, we may assume that 
$$(\mathcal X_{d-1}\to\cdots\to\mathcal X_1\to\mathcal X_0,Z_0,\{\sigma_{ij}\})$$
is a strictly $G$-split nodal fibration datum. 
Then, since $\mx_{d-1}$ is regular and $Z_{d-1}$ is a divisor with $G$-strict normal crossings, we can apply the $d=1$ case to the $G$-split nodal curve $\mx_d\to\mx_{d-1}$. 
\end{proof}

	\begin{cor}\label{Lipman}
	Let $S$ be an excellent noetherian scheme which is irreducible and of dimension $\leq2$. 
Let $X$ be an $S$-scheme of finite type with an action of a finite group $G$ 
and $Z$ a proper closed subset of $X$ which is $G$-stable. 
We assume that the generic geometric fiber of $X\to S$ is irreducible. 
Then, there exists a Galois alteration $(\phi,\alpha):(X',G')\to(X,G)$ with
$X'$ regular and a divisor with $G'$-strict normal crossings containing the pullback $\phi^{-1}(Z)$. 
	\end{cor}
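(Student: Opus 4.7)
The plan is to feed the situation into Lemma \ref{alt} to obtain a Galois alteration carrying a $G'$-split nodal fibration structure, then use the dimension-$\leq 2$ hypothesis to resolve the base via Lipman, and finally invoke Lemma \ref{blowup of fib} to upgrade the fibration to a strictly $G'$-split one — whose total space is automatically regular with a divisor of $G'$-strict normal crossings.

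More concretely, I would first apply Lemma \ref{alt} to the $G$-equivariant morphism $X\to S$ (with $G$ acting trivially on $S$) together with the $G$-stable closed subset $Z$, taking $d$ to be the dimension of the generic geometric fiber. This yields a surjection $\alpha:G'\to G$, projective Galois alterations $\psi:\mathcal Y'\to S$ and $\phi:\mathcal X'\to X$, and a $G'$-split nodal fibration datum $(\mx_d\to\cdots\to\mx_0,Z_0,\{\sigma_{ij}\})$ with $\mx_d=\mx'$, $\mx_0=\my'$, and $Z_d\supseteq\phi^{-1}(Z)$. Since $\psi$ is an alteration and $\dim S\leq 2$, the base $\my'$ is an excellent noetherian scheme of dimension at most two carrying a $G'$-action. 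I would then apply Lipman's resolution of singularities for excellent surfaces, carried out $G'$-equivariantly (using the canonical, automorphism-compatible nature of the construction, or by resolving the quotient $\my'/G'$ and pulling back), followed by further equivariant blowups of $G'$-orbits of closed points if necessary. This produces a projective $G'$-equivariant modification $\widetilde{\my}\to\my'$ with $\widetilde{\my}$ regular and the total transform of $Z_0$ a divisor with $G'$-strict normal crossings.

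Base-changing the original $G'$-split nodal fibration datum along $\widetilde{\my}\to\my'$ (which preserves the $G'$-split property) yields a $G'$-split nodal fibration datum over the regular base $\widetilde{\my}$ whose base closed subset is a $G'$-strict normal crossings divisor. Lemma \ref{blowup of fib} then converts it into a strictly $G'$-split nodal fibration datum $(\widetilde{\mx}_d\to\cdots\to\widetilde{\mx}_1\to\widetilde{\my},\widetilde Z_0,\{\widetilde\sigma_{ij}\})$, and by definition $\widetilde{\mx}_d$ is regular with $\widetilde Z_d$ a divisor having $G'$-strict normal crossings. The composite $\widetilde{\mx}_d\to\mx'\to X$ is the desired Galois alteration, since it is a composition of a projective birational $G'$-equivariant morphism with the projective Galois alteration $\phi$, and $\widetilde Z_d$ contains the pullback of $\phi^{-1}(Z)\subseteq Z_d$.

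The main obstacle is the $G'$-equivariant resolution step on the two-dimensional base: not only must the resolution exist equivariantly, but the output divisor must have $G'$-\emph{strict} normal crossings, which is stronger than ordinary simple normal crossings and typically requires additional blowups to ensure that no two components in the same $G'$-orbit coincide or meet non-transversally. Everything downstream of this — the base change and the application of Lemma \ref{blowup of fib} — is essentially formal once the base has been prepared.
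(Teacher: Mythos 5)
Your proposal follows the paper's own proof essentially step for step: apply Lemma \ref{alt} with $G$ acting trivially on $S$, resolve the (at most two-dimensional) base by Lipman, arrange the base divisor $Z_0$ into normal crossings (the paper cites \cite[Corollary 0.4]{cjs} where you propose ad hoc equivariant point blowups), and conclude with Lemma \ref{blowup of fib}. The equivariance and $G'$-strictness issues you flag on the base are real but are glossed over in the paper as well (and the paper's closing Remark notes that $G$-strictness is not actually needed downstream), so your argument is correct and not a genuinely different route.
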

	\begin{proof}
We consider the trivial action of $G$ on $S$. 
By Lemma \ref{alt}, we may assume that $X\to S$ is given by a $G$-split nodal fibration datum 
$(X=X_d\to\cdots\to X_0=S,Z_0,\{\sigma_{ij}\})$ 
with $Z=Z_d$ in the notation in Definition \ref{datum}. 
Here, by \cite{L}, we may assume that $S$ is regular. 
Further, by \cite[Corollary 0.4]{cjs}, we may assume that $Z_0$ is a divisor of $S_0$ with simple normal crossings. 
Then, the assertion follows from Lemma \ref{blowup of fib}.
	\end{proof}

\section{Log structures and nodal curves}\label{log}
We refer to \cite{KK} for log structures. 
In particular, log structures are always considered on the \'etale sites of schemes. 
For a log structure $\mm_X\to\mo_X$ on a scheme $X$, 
we denote the quotient $\mm_X/\mo_X^*$ by $\mbar_X$. 

Let $X$ be a regular noetherian scheme 
and $D$ a divisor with simple normal crossings. 
We consider the log structure $\mm_X\to\mo_X$ defined by $D$, that is, 
$\mm_X=j_*\mo^*_U\cap\mo_X$ 
where $U=X\setminus D$ and $j$ is the open immersion $U\to X$. 
We write $D$ as the sum of the irreducible components: 
$D=\bigcup_{i=1}^nD_i$. 
For a subset $I\subset \{1,\ldots,n\}$, we put 
$D_I=\bigcap_{i\in I}D_i$ 
and 
$\mathring D_I=D_I\setminus\bigcup_{i\notin I}D_{I\cup\{i\}}$. 

\begin{lem}\label{log of snc}
In the above settings, we have a canonical isomorphism 
$\mbar_X|_{\mathring D_I}\cong\nn^I$. 
\end{lem}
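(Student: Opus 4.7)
The plan is to check the assertion étale-locally at a geometric point of $\mathring D_I$ and to identify $\mbar_X$ with $\nn^I$ via the divisorial valuations along the branches $D_i$, $i\in I$.

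First, I would fix a geometric point $\bar x\to\mathring D_I$ and pass to the strict henselization $A=\mo_{X,\bar x}^{\mathrm{sh}}$. Since $X$ is regular and $D$ has simple normal crossings, I can choose a regular system of parameters $t_1,\ldots,t_d$ of $A$ such that for each $i\in I$ the ideal of $D_i$ in $A$ is $(t_i)$, while for $j\notin I$ the divisor $D_j$ does not meet $\spec A$ (the latter exactly because $\bar x\in \mathring D_I$, after shrinking the étale neighborhood).

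Second, I would compute the stalk $(\mm_X)_{\bar x}$ explicitly. By the definition $\mm_X=j_*\mo_U^{*}\cap\mo_X$, the stalk consists of those $f\in A$ which become units in $A\bigl[(\prod_{i\in I}t_i)^{-1}\bigr]$. Because $A$ is a regular local ring, it is a unique factorization domain by the Auslander--Buchsbaum theorem; the elements $t_i$ for $i\in I$ form a collection of mutually non-associate prime elements, and the height-one primes of $A$ corresponding to the local branches of $D$ are precisely the $(t_i)$. Hence every $f\in(\mm_X)_{\bar x}$ admits a unique factorization $f=u\prod_{i\in I}t_i^{a_i}$ with $u\in A^{*}$ and $a_i\in\nn$. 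Sending $f$ to the tuple $(a_i)_{i\in I}$—equivalently $(v_{D_i}(f))_{i\in I}$, where $v_{D_i}$ is the normalized valuation along $D_i$—defines a surjective monoid homomorphism $(\mm_X)_{\bar x}\to\nn^I$ whose kernel is exactly $A^{*}=(\mo_X^{*})_{\bar x}$. Passing to the quotient yields $(\mbar_X)_{\bar x}\cong\nn^I$.

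Third, since the map $f\mapsto(v_{D_i}(f))_{i\in I}$ is intrinsically defined (it does not depend on the choice of parameters $t_i$, only on the labeling of the components in $I$), the local isomorphisms are compatible with étale specialization and glue to an isomorphism of étale sheaves $\mbar_X|_{\mathring D_I}\cong\nn^I$. The main obstacle, or rather the essential input, is the factorization step: one must know that the strict henselization of a regular local ring is again a regular local ring, hence a UFD, and that no extraneous component $D_j$ with $j\notin I$ meets the chosen neighborhood, so that the only primes dividing elements of $(\mm_X)_{\bar x}$ come from the prescribed list $\{(t_i)\}_{i\in I}$.
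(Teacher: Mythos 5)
Your proof is correct and takes essentially the same route as the paper: a local computation identifying $(\mbar_X)_{\bar x}$ with $\nn^I$ via the prime local equations of the branches $D_i$, followed by gluing using the fact that the resulting isomorphism (equivalently, the tuple of divisorial valuations) is independent of the choice of parameters. The paper states this more tersely by writing down the map $\nn^I\to\mm_X$, $e_i\mapsto h_i$, and asserting the composite to $\mbar_X$ is an isomorphism; your unique-factorization argument in the strict henselization supplies exactly the verification it leaves implicit.
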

\begin{proof}
In the case where $X=\spec A$ is affine and $D$ is defined by $h_1\cdots h_n$ for some prime elements $h_i\in A$ ($i=1,\ldots,n$), 
the composite of the map 
$\nn^I\to\mm_X|_{\mathring D_{I}}:e_i\mapsto h_i$ 
and the natural map $\mm_X|_{\mathring D_{I}}\to\mbar_X|_{\mathring D_{I}}$ 
is an isomorphism. 
Note that it does not depend on the choice of the prime elements $h_i$. 
Thus, in general case, by gluing we get the desired isomorphism. 
\end{proof}

Further, the isomorphism in Lemma \ref{log of snc} is functorial in the following sense (Lemma \ref{functoriality log}): 
We consider a commutative diagram 
$$
\begin{xy}
\xymatrix{
X\ar[r]^f&Y\\
U\ar[r]\ar[u]^j&V\ar[u]_{j'}
}
\end{xy}
$$
such that 
$X$ and $Y$ are regular, 
$f$ is flat, 
$j$ and $j'$ are open immersions, 
and $D=X\setminus U$ and $E=Y\setminus V$ are divisors with simple normal crossings. 
We consider the log structure $\mm_X$ (resp. $\mm_Y$) 
defined by $D$ (resp. $E$). 
Then, we have a natural morphism $(X,\mm_X)\to(Y,\mm_Y)$ of log schemes. 
We write $D$ and $E$ as the sums of their irreducible components: 
$D=\sum_{i\in I}D_i$ and $E=\sum_{j\in J}E_j$. 
Let $I_f$ be the subset of $I$ consisting of elements $i\in I$ such that $f(D_i)\subset E$. 
Here, since $f$ is flat, 
we can consider 
the pullback $f^*E$ of the divisor $E$
and 
a map $\varphi:I_f\to J$ sending $i$ to $j$ such that $f(D_i)\subset E_j$. 
We write the divisor $f^*E$ 
as $\sum_{i\in I}m_iD_i$ for some $m_i\in\zz_{\geq0}$. 
\begin{lem}\label{functoriality log}
In the above notations, 
for every subset $I'\subset I$, we have a commutative diagram 
$$
\begin{xy}
\xymatrix{
(f^{-1}\mbar_Y)|_{\mathring D_{I'}}
\ar[r]&
\mbar_X|_{\mathring D_{I'}}
\\
\nn^{\varphi(I')}\ar[r]\ar[u]^{\cong}&\nn^{I'}\ar[u]_{\cong},
}
\end{xy}
$$
where 
%$J'$ is the subset of $J$ consisting of $j\in J$ with $f(D_i)\subset E_j$ for some $i\in I'$ 
%and 
the lower horizontal arrow is the morphism sending 
$e_j$ to $\sum_{i\in I'\cap \varphi^{-1}(j)}m_ie_i$.
\end{lem}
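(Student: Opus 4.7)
The plan is to verify commutativity on stalks at each geometric point of $\mathring D_{I'}$, using explicit local equations for both divisors. Fix a geometric point $\bar x$ of $\mathring D_{I'}$ and set $\bar y = f(\bar x)$, with $J_{\bar y} = \{j \in J : \bar y \in E_j\}$. Since $X$ and $Y$ are regular and $D$, $E$ have simple normal crossings, I would first pass to étale neighborhoods in which each $D_i$ through $\bar x$ is cut out by a prime element $h_i \in \mo_{X,\bar x}$ and each $E_j$ through $\bar y$ is cut out by a prime element $g_j \in \mo_{Y,\bar y}$. By the construction recalled in the proof of Lemma \ref{log of snc}, the right vertical isomorphism then sends $e_i$ to the class of $h_i$ in $\mbar_X$, and the left vertical sends $e_j$ to the class of $g_j$ (after identification with the stalk at $\bar y$). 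The top arrow in the diagram is induced by $g_j \mapsto f^* g_j$.

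Next I would compute this pullback in coordinates. Since $\mo_{X,\bar x}$ is regular with the $h_i$ part of a regular system of parameters, and since $f^*g_j$ is a non-zero-divisor by flatness of $f$, there is a unique expression
\begin{equation*}
f^*g_j = u_j \prod_{i \in I'} h_i^{n_{ij}}
\end{equation*}
with $u_j \in \mo_{X,\bar x}^*$ and $n_{ij} \in \zz_{\geq 0}$. Consequently the image of $e_j$ under the composite of the top arrow with the right vertical isomorphism equals $\sum_{i \in I'} n_{ij}\,e_i$, and the task reduces to showing $n_{ij} = m_i$ whenever $i \in I' \cap \varphi^{-1}(j)$, and $n_{ij} = 0$ otherwise.

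For this identification, the key observation is that $n_{ij}$ is precisely the multiplicity of $D_i$ in the Cartier divisor $\mathrm{div}(f^*g_j)$, which by flatness of $f$ agrees with the pullback divisor $f^*E_j$ near $\bar x$. Summing over $j$ and comparing with the given decomposition $f^*E = \sum_i m_i D_i$ yields $\sum_j n_{ij} = m_i$. On the other hand, $n_{ij} > 0$ forces $D_i \subset f^{-1}(E_j)$, hence $f(D_i) \subset E_j$, which puts $i$ in $I_f$ and pins down $j = \varphi(i)$. Combining these facts gives $n_{i,\varphi(i)} = m_i$ for $i \in I' \cap I_f$ and $n_{ij} = 0$ otherwise, noting also that $m_i = 0$ for $i \notin I_f$ since in that case $D_i$ is not a component of $f^*E$. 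Substituting yields exactly the required formula for the lower horizontal arrow.

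The main technical point is the divisor-theoretic identification in the last step: it rests on the flatness of $f$ (to make $f^*E_j$ a well-defined Cartier divisor and to ensure $f^*g_j$ is a non-zero-divisor), on the regularity of $\mo_{X,\bar x}$ (so that distinct $h_i$ give independent valuations), and on the unambiguousness of $\varphi$, i.e.\ that for $i \in I_f$ the irreducible subset $f(D_i) \subset E$ is contained in exactly one component $E_j$. Once these ingredients are in hand, the rest is bookkeeping of multiplicities.
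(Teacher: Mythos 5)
Your argument is correct and is essentially the paper's proof: the paper's one-line justification is precisely the local factorization $f^*g_j = u_j\prod_i h_i^{m_i}$ that you derive from regularity (unique factorization in $\mo_{X,\bar x}$), flatness, and the definition of the multiplicities $m_i$. You simply spell out the stalkwise verification and the bookkeeping ($n_{ij}>0$ forces $j=\varphi(i)$, and $m_i=0$ for $i\notin I_f$) that the paper leaves implicit.
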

\begin{proof}
This follows from the fact that $f^*E_j$ is locally defined by the ideal generated by $\prod_{i\in \varphi^{-1}(j)}h_i^{m_i}$, where $h_i$ is a defining equation of $D_i$. 
\end{proof}
	
\begin{lem}[{\rm\cite[2.23]{dJ}}]\label{local description}
Let $A$ be a strictly henselian noetherian local ring. 
Let $f:\mx\to\spec A$ be a split nodal curve. 
Then, \'etale locally on $\mx$, there exists an \'etale $A$-morphism 
$\mx\to\spec A[x,y]/(xy-t)$ 
for some $t\in A$. 
\end{lem}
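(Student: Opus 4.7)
The plan is to work étale-locally around a point $p\in\mx$ and split into the cases where $p$ is a smooth point of the fiber $\mx_s$ over $s=f(p)$ or a node of that fiber.

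If $p$ is a smooth point, I would invoke the standard étale-local structure of smooth morphisms to get an étale neighborhood $U$ of $p$ in $\mx$ together with an étale $A$-morphism $U\to\mathbb A^1_A=\spec A[x]$; after translating $x$ by a constant in $A$ I may assume the image of $U$ avoids $\{x=0\}$, and composing with the open immersion $\spec A[x,x^{-1}]\hookrightarrow\mathbb A^1_A$ gives the desired étale map $U\to\spec A[x,x^{-1}]=\spec A[x,y]/(xy-1)$, so the smooth case is handled with $t=1$.

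If $p$ is a node, I would first use the split condition (which forces $\kappa(p)=\kappa(s)$ and the two branches of $\mx_s$ at $p$ to be $\kappa(s)$-rational) to choose $\bar x,\bar y\in\mo_{\mx_s,p}^{sh}$ generating the maximal ideal with $\bar x\bar y=0$. Lifting these to $x,y\in\mo_{\mx,p}^{sh}$, the associated $A$-algebra map from the strict henselization of $A[X,Y]$ at the origin into $\mo_{\mx,p}^{sh}$ is surjective by Nakayama, and by flatness of $\mx\to\spec A$ together with the dimension count its kernel is principal, generated by some $F\equiv XY\pmod{\mathfrak m_A}$. Writing $F=XY+G$ with $G\in\mathfrak m_A\cdot A[X,Y]^{sh}$, the heart of the argument is to change variables $X\mapsto X+u$, $Y\mapsto Y+v$ with $u,v\in\mathfrak m_A\cdot A[X,Y]^{sh}$ and pick $t\in A$ so that $F$ becomes $XY-t$. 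Once this normal form is achieved, the induced map $A[X,Y]/(XY-t)\to\mo_{\mx,p}^{sh}$ is an isomorphism of strict henselizations and therefore comes from an étale $A$-morphism of an étale neighborhood of $p$ to $\spec A[X,Y]/(XY-t)$.

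The hard part will be this henselian normal-form step. Formally it is routine: a successive Weierstrass-style substitution in $\hat A[[X,Y]]$ clears the $X$-only, $Y$-only, and mixed higher-order terms of $G$ and yields $\hat\mo_{\mx,p}\cong\hat A[[X,Y]]/(XY-\hat t)$ for some $\hat t\in\hat A$. Upgrading this to $t\in A$ with $u,v$ in the henselization is the main obstacle, since $A$ is only assumed strictly henselian and Noetherian, so one cannot appeal directly to Artin approximation. My plan to handle it is to recast the required cancellations as a smooth system of polynomial equations over $A$ whose linearization at the trivial solution is the identity on the relevant summands---reflecting that a split node has an unobstructed one-parameter versal deformation---after which Hensel's lemma in the strictly henselian ring $A$ produces the desired henselian-algebraic solution.
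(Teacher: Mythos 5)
The paper itself gives no proof of this lemma --- it is quoted from de Jong \cite[2.23]{dJ} --- so the comparison is really with the standard argument in the sources de Jong relies on, and your outline does follow that route: handle smooth points via the local structure of smooth curves, and at a split node present $\mathcal O_{\mx,p}^{\mathrm{sh}}$ as $A\{x,y\}/(F)$ with $F\equiv xy \bmod \mathfrak m_A$ (where $A\{x,y\}$ denotes the henselization of $A[x,y]$ at $(\mathfrak m_A,x,y)$), then bring $F$ to the normal form $xy-t$. The smooth case and the presentation step are essentially correct, though ``surjective by Nakayama'' is too quick: the map $A\{x,y\}\to\mathcal O_{\mx,p}^{\mathrm{sh}}$ is not finite a priori, and the usual fix is to note that the corresponding morphism of finite-type $A$-schemes is quasi-finite at $p$ (its fibre there is Artinian since $\bar x,\bar y$ generate an ideal primary to the maximal ideal), so that over the henselian local ring $A\{x,y\}$ the henselization of the target is finite and Nakayama then applies; principality of the kernel via flatness is fine.

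The genuine gap is exactly where you place it, and the proposed repair does not close it. The Weierstrass-style substitution that clears the unwanted terms of $G$ modifies the data by elements of $\mathfrak m_A^n\cdot A[[x,y]]$ at the $n$-th step, so it converges only $\mathfrak m_A$-adically: the resulting unit, coordinate change, and constant live in $\hat A[[x,y]]$ and $\hat A$, not in $A\{x,y\}$ and $A$. Recasting the cancellations as ``a smooth system of polynomial equations over $A$'' to which Hensel's lemma applies is not available, because the unknowns are the series $u,v$ and the unit $e$ --- infinitely many coefficients --- whereas the Hensel/smoothness lifting criterion over $A$ handles only finitely many unknowns valued in $A$; and applying it instead inside the henselian ring $A\{x,y\}$ cannot encode the constraint that $t$ be a constant. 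Passing from the completed normal form to the henselized one with $t\in A$ is precisely an approximation statement, which, as you note, is unavailable for a general noetherian strictly henselian $A$ (the lemma assumes no excellence). One way to proceed without it is to first produce $t\in A$ intrinsically --- the non-smooth locus of $\mx\to\spec A$ near $p$ is cut out by the first Fitting ideal of $\Omega^1_{\mx/A}$ and is carried isomorphically onto a closed subscheme $\spec A/(t)\subset\spec A$ --- and then compare $\mathcal O_{\mx,p}^{\mathrm{sh}}$ with $A\{x,y\}/(xy-t)$ directly; alternatively, one quotes the deformation theory of the ordinary double point as developed in the references de Jong cites. If you are content with the excellent case (which is all this paper uses), adding that hypothesis and invoking Artin approximation would legitimize your plan, but it proves a weaker statement than the one quoted.
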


\begin{lem}\label{nodal log}
Let $\mx$ and $\my$ be regular noetherian schemes. 
Let $f:\mx\to\my$ be a split nodal curve, 
$Z$ a divisor of $\my$ with simple normal crossings 
such that $f$ is smooth over $\my\setminus Z$, 
and $\sigma:\my\to\mx$ a section of $f$ into the smooth locus. 
We consider the divisor 
$D=f^{-1}(Z)\cup\sigma(\my)$. 
We consider the log structure $\mm_\mx$ $ ($resp. $\mm_\my$) on $\mx$ $($resp. $\my)$ defined by $D$ $($resp. $Z)$. 
Then, the morphism $(\mx,\mm_\mx)\to(\my,\mm_\my)$ of log schemes is 
log smooth and saturated.
\end{lem}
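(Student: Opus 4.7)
The plan is to verify both assertions étale locally on $\mx$ and $\my$, combining the local description of split nodal curves from Lemma \ref{local description} with the chart descriptions of log structures given by Lemmas \ref{log of snc} and \ref{functoriality log}. This is legitimate because the log structure associated to a divisor, and the notions of log smoothness and saturatedness, are of étale-local nature.

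I first dispose of the geometric points of $\mx$ where $f$ is classically smooth. Such a point lies in one of the following loci: outside $D$ (trivial log structures, and $f$ classically smooth), on $f^{-1}(Z)$ but not on $\sigma(\my)$ (the morphism of log schemes is strict with smooth underlying map), or on $\sigma(\my)$ possibly meeting some components of $f^{-1}(Z)$. In the last case, Lemma \ref{local description} reduces $\mx$ étale locally to $\spec A[x]$ with $\sigma$ cut out by $x$, and the chart $\nn^r \to \nn^{r+1}$ obtained from $\mbar_\my$ by adjoining the generator coming from $x$ satisfies Kato's log smoothness criterion: trivial kernel, torsion-free cokernel $\zz$, and the induced classical morphism $\mx \to \my \times_{\spec\zz[\nn^r]} \spec\zz[\nn^{r+1}]$ is an isomorphism.

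The substantive case is at a node $p$ of $\mx$. Since $\sigma$ lands in the smooth locus of $f$, the section avoids $p$, and locally $D = f^{-1}(Z)$. By Lemma \ref{local description} we have $\mx = \spec A[x,y]/(xy-t)$ étale locally, where $A$ is the strict henselization of $\mo_{\my, f(p)}$. A computation in $\mathfrak m_R/\mathfrak m_R^2$, where $R$ is the local ring of $\mx$ at $p$, shows that regularity of $\mx$ forces $t$ to lie in $\mathfrak m_A \setminus \mathfrak m_A^2$, i.e., to be a regular parameter of $A$; moreover $V(t) \subseteq Z$ since $f$ is smooth away from $Z$. Writing $Z$ locally as $V(z_1\cdots z_r)$ with the $z_i$ part of a regular system of parameters, primality of $(t)$ together with $z_1\cdots z_r \in (t)$ yields $t = u z_i$ for some unit $u$ and some $i$; absorbing $u$ into $x$ and reindexing, we may take $t = z_1$. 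Then $x, y, z_2, \ldots, z_r$ are part of a regular system of parameters of $\mx$ at $p$, so $D$ is SNC there, and by Lemma \ref{log of snc} the log structure $\mbar_\mx$ has local chart $\nn^{r+1}$.

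By Lemma \ref{functoriality log} the chart morphism $\nn^r \to \nn^{r+1}$ sends the $z_1$-generator to $e_x+e_y$ and is the identity on the remaining coordinates. I then verify Kato's criterion: the groupified map $\zz^r\to\zz^{r+1}$ is injective with cokernel isomorphic to $\zz$ (in particular torsion-free), and the induced classical morphism $\mx \to \my\times_{\spec\zz[\nn^r]}\spec\zz[\nn^{r+1}]$ is étale locally an isomorphism, since the fiber product computes to $\spec A[x,y]/(xy-z_1)$. For saturatedness I invoke the standard criterion that an integral morphism of fs monoids is saturated iff the groupified cokernel is torsion-free; integrality follows since the monoid algebra map $\zz[t_1,\ldots,t_r] \to \zz[u_x,u_y,u_2',\ldots,u_r']$ sending $t_1 \mapsto u_xu_y$ and $t_i \mapsto u_i'$ for $i \geq 2$ is flat. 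The main obstacle is the node case, and within it the key step is identifying the parameter $t$ with one of the defining equations of $Z$ up to a unit, which is what enables the explicit chart computation to go through.
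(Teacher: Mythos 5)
Your overall route is the paper's: reduce \'etale-locally via Lemma \ref{local description} to the model $\spec A[x,y]/(xy-t)$, check Kato's chart criterion for log smoothness, and get integrality from flatness of the monoid algebra map. Your case analysis, and in particular the verification that regularity of $\mx$ forces $t$ to be a regular parameter cutting out a single component of $Z$ up to a unit, is actually more careful than the paper's own (very terse) proof, which passes directly to the node case with the chart $\nn\to\nn^2$, $1\mapsto(1,1)$. The genuine gap is in the saturatedness step. The criterion you invoke --- that an integral morphism of fs monoids is saturated if (and only if) the groupified cokernel is torsion-free --- is not a theorem. Counterexample: $\nn\to\nn^2$, $1\mapsto(2,1)$, i.e.\ $\zz[t]\to\zz[x,y]$, $t\mapsto x^2y$. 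This morphism is integral (any local homomorphism out of $\nn$ is), and $\cok(\zz\to\zz^2)\cong\zz$ is torsion-free since $\gcd(2,1)=1$, so it even satisfies Kato's log smoothness criterion; but its fibre over $t=0$ is $\spec k[x,y]/(x^2y)$, which is non-reduced, whereas a log smooth, integral, saturated morphism has reduced geometric fibres. Hence it is not saturated. Torsion-freeness of the cokernel only sees the index of the image lattice, not the multiplicities of the individual branches, and it is precisely those multiplicities that saturatedness controls.

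The conclusion you want is nevertheless correct, and the repair is short. Either quote \cite[Theorem 4.2]{T} as the paper does: a log smooth integral morphism of fs log schemes is saturated if and only if its geometric fibres are reduced, and the fibres of a nodal curve are reduced by definition. Or argue on charts: your morphism $\nn^r\to\nn^{r+1}$ is the direct sum of the identity of $\nn^{r-1}$ with the standard node chart $\nn\to\nn^2$, $1\mapsto(1,1)$, and the latter is saturated because $(1,1)$ has valuation exactly $1$ at each height-one prime of $\nn^2$ (equivalently, because its fibre $xy=0$ is reduced); saturatedness is stable under direct sums and strict base change, so the global statement follows. Everything else in your write-up stands.
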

For the definition of saturated morphisms of log schemes, see \cite[I.3.5, I.3.7, I.3.12, II.2.10]{T}. 
\begin{proof}
Since the question is \'etale local, %	we may assume that $Y=\spec A$ is affine with $A$ regular local and there is an \'etale $A$-morphism 
%	$X\to\spec A[x,y]/(xy-t)$ for some $t\in A$. 
%	We may assume that $X=\spec A[x,y]/(xy-t)$. 
%	Since $X$ is assumed to be regular, $t$ is a prime element of $A$. 
%	Here, $Z$ is the zero of some element $a\in A$ of the form $a=t_1\cdots t_r$ with each $t_i$ prime element and with $t_r=t$. 
%under the notation in the proof of Lemma \ref{snc of nodal}, 
we may assume, by Lemma \ref{local description}, that 
$\my$ is of the form $\spec A$ for some ring $A$ 
and there exists an $A$-isomorphism $\mx\to\spec A[x,y]/(xy-t)$ for some nonzero $t\in A$. 
Then, the diagram 
$$
\begin{xy}
\xymatrix{
\mx\ar[rr]^{\phi}\ar[d]_f&&\spec\zz[\nn^2]\ar[d]^h\\\my\ar[rr]^{\psi}&&\spec\zz[\nn],}
\end{xy}
$$
is a Cartesian diagram of log schemes, 
where the morphism $h$ is defined by $\nn\to\nn^2:1\mapsto(1,1)$ and 
$\psi$ (resp. $\phi$) is defined by $\nn\to A:1\mapsto t$ (resp. $\nn^2\to A[x,y]/(xy-t):(1,0)\mapsto x,\ (0,1)\mapsto y$). 
Then $f$ is log smooth, since $\zz\to\zz^2:1\to(1,1)$ is injective and its cokernel is torsion free. 
Further $f$ is integral (see \cite[Proposition 4.1]{KK} for the definition), since $h$ is flat. 
Since every fiber of $f$ is reduced, we can use \cite[Theorem 4.2]{T} to see that $f$ is saturated. 
\end{proof}

%	\begin{lem}\label{sat}
%	Let $\varphi:Q\to P$ be a homomorphism of fs monoids. 
%	Then, $\spec\zz[P]\to\spec\zz[Q]$ is saturated if and only if 
%	for every submonoids $P'$ of $P$, 
%	the homomorphism 
%	$Q/\varphi^{-1}(P')\to P/P'$ 
%	is saturated. 
%	\end{lem}

\section{Monodromy action composed with a finite group action}\label{G-sheaves}
%	Here we follow 
%	\begin{conv}
%	We say that a $\ql$-sheaf on a noetherian scheme is lisse if it has a lisse $\zl$-lattice. 
%	\end{conv}

Let $Y$ be a noetherian scheme, 
$\ell$ be a prime number invertible on $Y$, 
and $\mf$ be a locally constant constructible sheaf of $\zz/\ell^n$-modules on $Y$. 
We consider an admissible action of a finite group $G$ on the scheme $Y$ and 
a $G$-sheaf structure on $\mf$, 
that is a family of morphisms $\{\varphi_g:g^*\mf\to\mf\}_{g\in G}$ satisfying the cocycle condition; 
for $g,h\in G$, the composite $(gh)^*\mf\cong h^*g^*\mf\overset{h^*\varphi_g}{\to}h^*\mf\overset{\varphi_h}{\to}\mf$ coincides with $\varphi_{gh}$. 
We assume that $Y$ is connected and that $Y\to Y_0=Y/G$ is \'etale, and hence a Galois finite \'etale cover.  
We denote the Galois group $\Aut(Y/Y_0)$ by $H$ 
and take a geometric point $y$ of $Y$. 
Then, we have a natural action of 
$G\times_H\pi_1(Y_0,y)$ 
on the stalk $\mf_y$ defined as follows: 
For a pointed connected finite \'etale cover $(Y',y')$ of $(Y,y)$ which is Galois over $(Y_0,y)$, 
an element $(g,\sigma)\in G\times_H\pi_1(Y_0,y)$ induces 
a commutative diagram 
$$
\begin{xy}
\xymatrix{
Y'\ar[r]^\sigma\ar[d]_p&Y'\ar[d]^p\\ Y\ar[r]^g&Y.
}
\end{xy}
$$
We define an action of $(g,\sigma)$ on $\Gamma(Y',\mf)$ by the composite 
$
\Gamma(Y',\mf)\cong\Gamma(Y',p^*\mf)\overset{\sigma^*}{\to}\Gamma(Y',\sigma^*p^*\mf)\cong\Gamma(Y',g^*\mf)\overset{\varphi_g}{\to}\Gamma(Y',\mf).
$
It defines an action of 
$G\times_H\pi_1(Y_0,y)$ 
on $\Gamma(Y',\mf)$ which is functorial with respect to $(Y',y')$, 
and hence we obtain an action 
$G\times_H\pi_1(Y_0,y)$ 
on $\mf_y\cong\varinjlim_{(Y',y')}\Gamma(Y',\mf)$, 
where $(Y',y')$ runs over all pointed connected finite \'etale covers which are Galois over $(Y_0,y)$. 

This action is defined also in $\ell$-adic settings: 
Let $\mf$ be a lisse $\zl$-sheaf on $Y$ with a $G$-sheaf structure. 
We write $\mf$ as an inverse system $(\mf_n)_{n\geq1}$, 
where $\mf_n$ is a locally constant constructible sheaf of $\zz/\ell^n$-modules 
such that $\mf_{n+1}\otimes_{\zz/\ell^{n+1}}\zz/\ell^n\cong\mf_n$. 
Then, we have a natural action of 
$G\times_H\pi_1(Y_0,y)$ 
on the stalk $\mf_y\cong\varprojlim_n(\mf_{n,y})$. 
We also have, for a lisse $\ql$-sheaf $\mf$ on $Y$ with a $G$-sheaf structure, a natural action of 
$G\times_H\pi_1(Y_0,y)$ on the stalk $\mf_y$. 

Let $\my$ be the spectrum of a strictly henselian regular local ring 
with an action of a finite group $G$. 
Let $Z$ be a divisor of $\my$ with simple normal crossings which is $G$-stable. 
We denote by $\mm_\my$ the log structure on $\my$ defined by $Z$. 
%Let $G$ be a finite group acting on the log scheme $(\my,\mm)$. 
Let $\ell$ be a prime number invertible on $\my$. 
Let $\mf$ be a constructible sheaf of $\zz/\ell^n$-modules on the Kummer \'etale topos $\my_{\rm\text{k\'et}}$ 
with a $G$-sheaf structure. 
See \cite[Section 2]{I} for the details of Kummer \'etale toposes. 
We denote the fraction field of $\my$ by $K$. 
We write $K_0$ for the fixed subfield $K^G$ by $G$ 
and $H$ for the Galois group $\gal(K/K_0)$. 
We take 
%a separable closure $\Kbar$ of $K$ and 
a log-geometric point $\widetilde y$ over the log point $(y,\mm_y)$, where $y$ is the closed point of $\my$ and the log structure $\mm_y$ is the pullback of the log structure $\mm_\my$ by the closed immersion $y\to\my$. 
Let $(\widetilde\my,\mm_{\widetilde\my})$ be the log strict localization \cite[4.5]{I} of $(\my,\mm)$ at $\widetilde y$. 
We write $\widetilde K$ for the fraction field of $\widetilde \my$. 
%which is Galois over $K_0$ by Lemma \ref{gal} below. 
\begin{lem}\label{gal}
$\widetilde K$ 
%of $\widetilde \my$ 
is Galois over %the field 
$K_0$. 
\end{lem}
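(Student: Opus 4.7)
The plan is to factor the extension as $\widetilde K / K / K_0$, show each piece is Galois, and show every $g \in G$, viewed as an element of $\mathrm{Aut}(K/K_0)$, preserves $\widetilde K$ inside a fixed separable closure $\overline K$. The first piece, $K/K_0$, is Galois with group $H$ by the very definition $K_0 = K^G$.

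For the main step I would first give an explicit description of $\widetilde K$ and then show each $g \in G$ preserves it. Write $Z = D_1 \cup \cdots \cup D_n$ as the union of its irreducible components. Since $\my$ is strictly henselian local regular, each $D_i$ is cut out by a global equation $h_i \in \mo(\my)$, and Hensel's lemma applied to $T^m - 1$ over the separably closed residue field gives $\mu_m \subset \mo(\my)^*$ for every $m$ invertible on $\my$. Combining this with the description of Kummer \'etale covers (each intermediate ring $\mo(\my)[T_i]/(T_i^m - h_i)$ is finite over the henselian $\mo(\my)$ with the same residue field, so is again strictly henselian local) should yield
\[
\widetilde K \;=\; \bigcup_{m} K\bigl(h_1^{1/m}, \ldots, h_n^{1/m}\bigr),
\]
where $m$ ranges over positive integers invertible on $\my$. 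Each finite stage is a Kummer extension of $K$, so $\widetilde K/K$ is Galois. For preservation under $G$: each $g$ permutes the $D_i$, say $g(D_i) = D_{\sigma_g(i)}$, so $g(h_i) = u_{g,i} h_{\sigma_g(i)}$ for some unit $u_{g,i} \in \mo(\my)^*$. Hensel's lemma yields $u_{g,i}^{1/m} \in \mo(\my)$, so fixing any extension of $g$ to $\overline K$ and evaluating at $h_i^{1/m}$ gives an element of the form $\zeta \cdot u_{g,i}^{1/m} \cdot h_{\sigma_g(i)}^{1/m}$ with $\zeta \in \mu_m \subset K$, which lies in $\widetilde K$. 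Thus $g(\widetilde K) \subseteq \widetilde K$, and equality follows by applying $g^{-1}$.

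To conclude, any $K_0$-embedding $\tau : \widetilde K \to \overline K$ restricts to some $h \in H$, lifted by some $g \in G$ acting on $\widetilde K$; then $\tau \circ g^{-1}$ is a $K$-embedding of $\widetilde K$, which by the Galois property of $\widetilde K / K$ lands in $\widetilde K$, so $\tau$ does too. Separability is automatic since all adjoined radicals have order invertible on $\my$. The main obstacle is the explicit identification of $\widetilde K$: this requires unwinding the definition of log strict localization \cite[4.5]{I} and using the strictly henselian hypothesis to absorb the ordinary \'etale contribution into the Kummer tower; once that is in hand, the rest is routine Galois theory combined with Hensel's lemma.
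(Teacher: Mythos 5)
Your proof is correct in outline but takes a genuinely more explicit route than the paper. The paper never identifies $\widetilde K$ concretely: it simply observes that $\widetilde K$ is the union of all finite subextensions $K'$ of $\Kbar/K$ that arise as fraction fields of connected finite Kummer \'etale covers of $(\my,\mm_\my)$, and that this class of subfields is permuted by $\gal(\Kbar/K_0)$ because any $\sigma$ restricts on $K$ to an element of $H$ lifting to some $g\in G$, and transporting a Kummer \'etale cover along $g$ yields another Kummer \'etale cover; stability of $\widetilde K$ (hence normality over $K_0$) follows in two lines. You instead compute $\widetilde K=\bigcup_m K(h_1^{1/m},\dots,h_n^{1/m})$ and check stability by hand via Hensel's lemma. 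Your computation is the standard description of the log strict localization of a log regular strictly local scheme (essentially the determination of its Kummer \'etale fundamental group as $\varprojlim_m\mathrm{Hom}(\mgp_{\my,y},\mu_m)$, cf.\ \cite[4.7]{I}, using that $\my$ is strictly henselian to kill the classical \'etale part and that regular local rings are UFDs to get global equations $h_i$), and you correctly flag it as the main point needing justification; it is more than the lemma requires, but in exchange it gives you the explicit Kummer tower and the structure of $\gal(\widetilde K/K)$, which the paper's argument does not provide. The remaining steps (Kummer extensions since $\mu_m\subset\mo(\my)^*$, the unit $u_{g,i}$ having an $m$-th root by Hensel, and the final d\'evissage through the Galois extension $K/K_0$) are all sound.
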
 
\begin{proof}
Let $\Kbar$ be a separable closure of $K$. 
We fix an embedding $\widetilde K\to\Kbar$. 
Then $\widetilde K$ is 
%the maximum extension of $K$ in $\Kbar$ with the property of being 
the union of all finite sub-extensions $K'$ of $\Kbar/K$ 
such that $K'$ is isomorphic over $\my$ to the fraction field of some connected scheme with a log structure which is finite and Kummer \'etale over $\my$. 
Thus, for every $\sigma\in\gal(\Kbar/K_0)$, 
%the image $\sigma(\widetilde K)$ also has the property above, 
we have $\sigma(\widetilde K)\subset\widetilde K$. 
\end{proof}
We define a natural action 
$G\times_H\gal(\widetilde K/K_0)$ 
on the stalk $\mf_{\tilde y}$. 
Let $(\my',\widetilde y')$ be a pointed connected finite Kummer \'etale cover of $(\my,\widetilde y)$ 
such that the fraction field $K'$ of $\my'$ is Galois over $K_0$. 
Note that $K'$ can be viewed as a subfield of $\widetilde K$ by the natural morphism $\my'\to\widetilde\my$. 
Then, an element $(g,\sigma)\in G\times_H\gal(\widetilde K/K_0)$ 
induces a commutative diagram 
$$
\begin{xy}
\xymatrix{
\my'\ar[r]^\sigma\ar[d]_p&\my'\ar[d]^p\\ \my\ar[r]^g&\my.
}
\end{xy}
$$
We define an action of $(g,\sigma)$ on $\Gamma(\my',\mf)$ by the composite 
$
\Gamma(\my',\mf)\cong\Gamma(\my',p^*\mf)\overset{\sigma^*}{\to}\Gamma(\my',\sigma^*p^*\mf)\cong\Gamma(\my',g^*\mf)\overset{\varphi_g}{\to}\Gamma(\my',\mf).
$
It defines an action of 
$G\times_H\gal(\widetilde K/K_0)$ 
on $\Gamma(\my',\mf)$ which is functorial with respect to $(\my',\widetilde y')$. 
Now we have 
%$\mf_{\tilde y}\cong\varinjlim_{y'}\Gamma(y',\mf)$, 
%where $y'$ runs over all log points which are finite Kummer \'etale over $y$. 
%	and fit in the diagram 
%	$$
%	\begin{xy}
%	\xymatrix{
%	\widetilde y\ar[r]\ar[rd]&y'\ar[d]\\&y.
%	}
%	\end{xy}
%	$$
%By \cite[4.7.(d)]{I}, this is isomorphic to 
$\mf_{\tilde y}\cong\varinjlim_{(\my',\widetilde y')}\Gamma(\my',\mf)$, 
where $(\my',\widetilde y')$ runs over all pointed finite Kummer \'etale covers of $(\my,\widetilde y)$. 
%which fit in the diagram 
%$$
%\begin{xy}
%\xymatrix{
%\widetilde y\ar[r]\ar[rd]&\my'\ar[d]\\&\my.
%}
%\end{xy}
%$$
Further, by Lemma \ref{gal}, in the filtered inverse system of pointed finite Kummer \'etale covers of $(\my,\widetilde y)$, 
pointed finite Kummer \'etale covers $(\my',\widetilde y')$ such that $\spec K\times_\my\my'$ is Galois over $\spec K_0$ form 
a cofinal system. 
Thus, we obtain an action 
$G\times_H\gal(\widetilde K/K_0)$ 
on $\mf_{\tilde y}$. 
As before, this action is defined also in $\ell$-adic settings. 
%For $\ell$-adic sheaves on a log schemes, we refer to \cite{}. 

\begin{lem}\label{equiv}
Let $\Kbar$ be a separable closure of $K$. We fix an embedding $\iota:\widetilde K\to\Kbar$. 
Then the cospecialization map 
$\mf_{\tilde y}\to\mf_{\Kbar}$ 
induced by $\iota$ 
is $G\times_H\gal(\Kbar/K_0)$-equivariant, where on $\mf_{\tilde y}$ we consider the action defined through the surjection $G\times_H\gal(\Kbar/K_0)\to G\times_H\gal(\widetilde K/K_0)$ induced by $\iota$. 
\end{lem}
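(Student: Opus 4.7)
The plan is to describe both actions through the same formalism---functoriality of sections with respect to pointed covers indexed by a filtered system---so that the cospecialization map itself is the colimit of a morphism of such systems obviously compatible with the prescribed group actions. Since $\widetilde K$ is Galois over $K_0$ by Lemma~\ref{gal}, the embedding $\iota$ induces a surjective homomorphism $\gal(\Kbar/K_0)\to\gal(\widetilde K/K_0)$, $\sigma\mapsto\bar\sigma$, lying over the identity of $H=\gal(K/K_0)$. Hence $(g,\sigma)\mapsto(g,\bar\sigma)$ defines a homomorphism $G\times_H\gal(\Kbar/K_0)\to G\times_H\gal(\widetilde K/K_0)$, so the action on $\mf_{\tilde y}$ through the source group is well defined.

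Next I would use the cofinality remark following Lemma~\ref{gal} to write $\mf_{\tilde y}=\varinjlim\Gamma(\my',\mf)$ over the filtered system $\Lambda$ of pointed connected finite Kummer \'etale covers $(\my',\widetilde y')$ of $(\my,\widetilde y)$ such that $\spec K\times_\my\my'$ is Galois over $\spec K_0$. For each $(\my',\widetilde y')\in\Lambda$, pulling back along $\spec K\to\my$ and pointing via the composite $\spec\Kbar\to\spec\widetilde K\to\my'$ induced by $\iota$ yields a pointed connected finite \'etale cover $(U,\bar u)$ of $(\spec K,\spec\Kbar)$ that is Galois over $\spec K_0$, hence a term in the filtered system computing $\mf_{\Kbar}$. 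The cospecialization map induced by $\iota$ is, on representatives, the restriction $\Gamma(\my',\mf)\to\Gamma(U,\mf|_U)$ followed by the canonical map to $\mf_{\Kbar}$.

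The core verification is then that for fixed $(g,\sigma)\in G\times_H\gal(\Kbar/K_0)$ and fixed $(\my',\widetilde y')\in\Lambda$, the square
$$
\begin{xy}
\xymatrix{
\Gamma(\my',\mf)\ar[r]^-{(g,\bar\sigma)}\ar[d]&\Gamma(\my',\mf)\ar[d]\\
\Gamma(U,\mf|_U)\ar[r]^-{(g,\sigma)}&\Gamma(U,\mf|_U)
}
\end{xy}
$$
commutes, where the verticals are restriction. Unwinding the definitions preceding Lemma~\ref{gal}, both horizontal arrows are the composite of pullback along the automorphism of the relevant cover induced by $\bar\sigma$ (respectively $\sigma$) sitting over $g$ with the $G$-sheaf structure map $\varphi_g$. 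The automorphism of $U$ implementing $\sigma$ is the base change to $\spec K$ of the automorphism of $\my'$ implementing $\bar\sigma$: the induced automorphism of $K'=\Gamma(U,\mo_U)\subset\widetilde K$ coincides, via $\iota$, with the restriction of $\sigma$ to $K'\subset\Kbar$, and the pointings agree because $\iota$ was used to define $\bar u$. The $\varphi_g$ factors then cancel automatically.

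Passing to the colimit over $\Lambda$ yields the claim. I do not expect a real obstacle: once both actions are written via the same pointed-cover formalism, equivariance reduces to identifying the two automorphisms of $K'$ and checking base-point compatibility, which is immediate from the choice of $\iota$. The only mild nuisance is book-keeping fiber-product pointings through Kummer \'etale covers as opposed to classical \'etale covers.
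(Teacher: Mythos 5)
Your proposal is correct and follows essentially the same route as the paper: reduce to the finite level (a single pointed Kummer \'etale cover $(\my',\widetilde y')$ with $K'$ Galois over $K_0$), decompose each action as pullback along the automorphism induced by $\sigma$ followed by $\varphi_g$, and check that the resulting square with restriction maps as verticals commutes before passing to the colimit. The paper's proof is exactly this commutative-diagram verification, so no further comment is needed.
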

\begin{proof}
It suffices to show that, in the case of torsion coefficients, 
for every finite Kummer \'etale cover $\my'$ of $\my$ such that 
the fraction field $K'$ is Galois over $K_0$, 
the morphism 
$\Gamma(\my',\mf)
\to
\Gamma(\spec K',\mf)$ 
is $G\times_H\gal(K'/K_0)$-equivariant. 
This follows from the following commutative diagram (we write $Y'$ for $\spec K'$ to simplify the notations):
$$
\begin{xy}
\xymatrix{
\Gamma(\my',\mf)\ar@{=}[r]\ar[d]
&
\Gamma(\my',p^*\mf)\ar[r]^{\sigma^*}\ar[d]
&
\Gamma(\my',\sigma^*p^*\mf)\ar@{=}[r]\ar[d]
&
\Gamma(\my',g^*\mf)\ar[r]^{\varphi_g}\ar[d]
&
\Gamma(\my',\mf)\ar[d]
\\
\Gamma(Y',\mf)\ar@{=}[r]
&
\Gamma(Y',p^*\mf)\ar[r]^{\sigma^*}
&
\Gamma(Y',\sigma^*p^*\mf)\ar@{=}[r]
&
\Gamma(Y',g^*\mf)\ar[r]^{\varphi_g}
&
\Gamma(Y',\mf).
}
\end{xy}
$$

\end{proof}

\section{$\ell$-independence}\label{indep}

We consider a monodromy action composed with a finite group action defined as follows. 
Let $K$ be a field with an action of a finite group $G$. 
Let $X$ be a scheme separated of finite type over $K$ with an action of $G$ which makes the structural morphism $X\to\spec K$ equivariant. 
Let $\ell$ be a prime number different from the characteristic of  $K$. 
We write $H$ for the Galois group $\gal(K/K_0)$, where $K_0$ is the fixed subfield $K^G$ of $K$. 
We take a separable closure $\Kbar$ of $K$. 
%Let $\bar\eta$ be a generic geometric point of $Y$. 
%and $\eta$ (resp. $\bar{\eta}$) be the generic point (resp. a generic geometric point) of $Y$. 
Then, as in section \ref{G-sheaves}, we have a natural continuous action of the fiber product $G\times_H\gal(\Kbar/K_0)$ on $H^q_c(X\times_K\Kbar,\mathbb Q_\ell)$ for every $q$. 
%	Since $G\times_H\pi_1(\eta_0,\bar\eta)$ acts on $X\times_Y\bar\eta$, 
%	it acts continuously on the cohomology group $H_c^i(X\times_Y\bar\eta,\mathbb Q_\ell)$. 
%	Since we are assuming that $Rf_!\mathbb Q_\ell$ is lisse, 
%	the action of $\pi_1(\eta,\bar\eta)$ on the cohomology group factors through the surjection 
%	$\pi_1(\eta,\bar\eta)\to\pi_1(Y,\bar\eta)$. 
%	The fiber product $G\times_H\pi_1(Y_0,\bar\eta)$ is canonically isomorphic to the pushout of the diagram 
%	$\pi_1(Y,\bar\eta)\gets\pi_1(\eta,\bar\eta)\to G\times_H\pi_1(\eta_0,\bar\eta)$, 
%	where the second arrow is the map $\sigma\mapsto(1,\sigma)$. 
%	In fact, if we denote the pushout by $\Gamma$, then we have a morphism of short exact sequences;
%	$$
%	\begin{xy}
%	\xymatrix{
%	0\ar[r]&
%	\pi_1(Y,\bar\eta)\ar[r]
%	\ar[d]^{\mathrm{id}}&
%	\Gamma\ar[r]\ar[d]&
%	G\ar[r]\ar[d]^{\mathrm{id}}
%	&0\\
%	0\ar[r]&
%	\pi_1(Y,\bar\eta)\ar[r]&
%	G\times_H\pi_1(Y_0,\bar\eta)\ar[r]&
%	G\ar[r]&0.
%	}
%	\end{xy}
%	$$
%	Thus, the action of $G\times_H\pi_1(\eta_0,\bar\eta)$ on the cohomology group factors through 
%	$G\times_H\pi_1(Y_0,\bar\eta)$. 

\begin{thm}\label{ellindepmonod}
Let $S$ be an excellent noetherian scheme of dimension $\leq2$. 
Let $K$ be a field with an action of a finite group $G$ 
and $\spec K\to S$ a morphism which is essentially of finite type (in the sense of Definition \ref{cptfn}.\ref{fg}). 
Let $X$ be a scheme separated of finite type over $K$ with an admissible action of $G$ which makes the structural morphism $X\to\spec K$ equivariant. 
We take a separable closure $\overline K$ of $K$ and a prime number $\ell$ invertible on $S$. 
We write $H$ for the Galois group $\gal(K/K_0)$, where $K_0$ is the fixed subfield $K^G$ of $K$. 
Then, for every element $(g,\sigma)\in G\times_HE_{K_0/S}$, 
\begin{enumerate}
\item
the eigenvalues of the action of $(g,\sigma)$ on $H^q_c(X\times_K\Kbar,\mathbb{Q}_\ell)$, for each $q$, are roots of unity, 
\item
the alternating sum of the traces 
$$\sum_q(-1)^q\tr((g,\sigma),H^q_c(X\times_K\Kbar,\mathbb{Q}_\ell))$$
is an integer independent of a prime number $\ell$ invertible on $S$. 
\end{enumerate}
\end{thm}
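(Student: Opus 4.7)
The plan is to reduce, by a $G$-equivariant Galois alteration, to a strictly ``semistable'' situation over a compactification of $\spec K_0$ in $S$, and then to express the trace in terms of combinatorial data on a log special fiber via Nakayama's log smooth and proper base change theorem together with an explicit computation of log nearby cycles.

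First, by standard $G$-equivariant dévissage (excision along a $G$-stable stratification and induction on dimension), I reduce to the case where $X$ is integral. I pick a normal compactification $\my_0$ of $\spec K_0$ over $S$, an $H$-equivariant normal compactification $\my$ of $\spec K$ dominating $\my_0$ (via Nagata), and a proper integral model $\mx\to\my$ of $X$. Lemma \ref{char of E} ensures that after possibly enlarging $\my_0$ inside an inverse system of normal compactifications, $\sigma\in E_{K_0/S}$ fixes a geometric point of $\my_0$. Applying Gabber's refinement (Lemma \ref{alt}) to $\mx\to\my$ with the $G\times H$-action and closed subset $Z=\mx\setminus X$, then applying Lemma \ref{blowup of fib} together with the resolution inputs of Corollary \ref{Lipman} (this is where the hypothesis $\dim S\le 2$ enters), yields a surjection $G'\to G\times H$ and a strictly $G'$-split nodal fibration datum $(\mx'=\mx_d\to\cdots\to\mx_0=\my',Z_0,\{\sigma_{ij}\})$ with $Z_d$ containing $\phi^{-1}(Z)$. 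By Lemma \ref{on E}, $\sigma$ lifts so as to fix a geometric point $\bar y'$ of $\my'$, above which I choose a log-geometric point $\widetilde y$.

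Second, I equip $\mx'$ and $\my'$ with the log structures defined by $Z_d$ and $Z_0$. Iterating Lemma \ref{nodal log} shows that $f\colon(\mx',\mm_{\mx'})\to(\my',\mm_{\my'})$ is log smooth, saturated, and has proper underlying morphism. Nakayama's log smooth and proper base change theorem combined with Lemma \ref{equiv} yields an equivariant isomorphism
$$
H^q\bigl((\mx'\times_{\my'}\spec K')_{\overline{K'}},\ql\bigr)\cong H^q\bigl((\mx'_{\widetilde y})_{\text{k\'et}},\ql\bigr),
$$
where $K'$ is the function field of $\my'$. Combined with proper-smooth descent along the Galois alteration $\phi$ (dividing by its generic degree, invertible in $\ql$), this expresses traces on $H^q_c(X_{\Kbar},\ql)$ in terms of traces on the Kummer-\'etale cohomology of $\mx'_{\widetilde y}$. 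I then compute the log nearby cycle complex $R\varepsilon_*\ql$ on $\mx'_{\widetilde y}$, where $\varepsilon$ forgets the log structure: using Lemma \ref{log of snc} and Lemma \ref{functoriality log}, $R^i\varepsilon_*\ql$ is locally isomorphic to $\bigwedge^i(\mgp_{\mx'_{\widetilde y}}\otimes\ql)(-i)$, supported on the strata of the closed fiber. The resulting Leray spectral sequence expresses the Kummer-\'etale cohomology in terms of ordinary $\ell$-adic cohomology of the smooth proper strata, twisted by exterior powers of the character lattice, equivariantly for the $(g,\sigma)$-action.

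Finally, I apply the $(g,\sigma)$-equivariant Lefschetz--Verdier trace formula on the strata. Because $\sigma$ comes from the inertia group of a strictly henselian valuation ring over $S$, its induced action on the finitely generated free abelian group $\mgp_{\mx'_{\widetilde y}}$ is of finite order, so eigenvalues on each $E_1$-term of the spectral sequence are roots of unity, giving assertion (1); the contribution of each stratum to the total alternating trace is a $\zz$-valued local fixed-point count independent of $\ell$, giving assertion (2). The main difficulty will be the equivariant bookkeeping: I must carry the $G\times_H\gal(\Kbar/K_0)$-action through the Galois alteration (which inflates the group to $G'$ and later requires taking invariants), through the log smooth base change, and through the stratum combinatorics governed by the $G'$-strict normal crossings divisor $Z_d$, while at each step compatibly lifting the chosen fixed geometric points from Lemma \ref{char of E}.
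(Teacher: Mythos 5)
Your proposal follows essentially the same route as the paper: reduction by Gabber's equivariant alteration (plus Lipman/CJS resolution, which is where $\dim S\le2$ enters) to a strictly split nodal fibration over a fixed geometric point supplied by Lemma \ref{char of E}, then Nakayama's log smooth proper base change to the log geometric fiber, the computation of $R\varepsilon_*\ql$ as exterior powers of the relative log characteristic, and a stratified Lefschetz argument. The only cosmetic differences are that the paper works with the relative sheaf $\mgp_{\mx_y/y}$ (the cokernel of $f^{-1}\mgp_y\to\mgp_{\mx_y}$) rather than $\mgp_{\mx_{\tilde y}}$ itself, and treats the non-proper strata by recursing through the separably-closed-base case of the theorem (a further alteration plus the ordinary Lefschetz trace formula) rather than invoking Lefschetz--Verdier directly.
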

\begin{rem}
Note that by \cite[Theorem 2.2]{IZ}, the alternating sum of the traces of $(g,\sigma)$ acting on $H_c^i(X_{\Kbar},\ql)$ does not change even if we replace $H_c^i$ by $H^i$. 
\end{rem}

In the proof of the theorem, we use the following terminology: 
%	To consider reduction of $X$ along a compactification of $\spec K$ over $S$, we make a following definition: 
\begin{defn}
Let $S$ be a noetherian scheme. 
Let $K$ be a field with an action of a finite group $G$ 
and $\spec K\to S$ a morphism which is essentially of finite type. 
Let $X$ be a scheme separated of finite type over $K$ with an admissible action of $G$ which makes the structural morphism $X\to\spec K$ equivariant. 
A $G$-{\it integral model} over $S$ of $X\to\spec K$ is 
a $G$-equivariant morphism $\mx\to\my$ of normal connected proper $S$-schemes together with a $G$-equivariant commutative diagram 
$$
\begin{xy}
\xymatrix{X\ar[r]\ar[d]&\mx\ar[d]\\\spec K\ar[r]&\my}
\end{xy}
$$
such that 
the lower horizontal arrow is a compactification of $K$ over $S$ in the sense of Definition \ref{cptfn} 
%gives an isomorphism between $\spec K$ and the generic point of $\my$ 
and that 
the natural morphism $X\to\mx\times_\my\spec K$ is a dense open immersion. 
\end{defn}
\begin{lem}\label{nagata}
Let $S$ and $X\to\spec K$ be as above. 
Then a $G$-integral model of $X\to\spec K$ over $S$ exists. 
\end{lem}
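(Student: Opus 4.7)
The plan is to produce $\my$ by Nagata compactification of $\spec K_0 \to S$ followed by normalization in $K$, and then to produce $\mx$ by Nagata compactification of $X \to \my$ combined with an equivariantization procedure using the $G$-action.

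For $\my$: by Artin's theorem the extension $K/K_0$ is finite (Galois with group the image of $G \to \Aut(K)$), so the morphism $\spec K_0 \to S$ is again essentially of finite type. Since $S$ is noetherian, Nagata's compactification theorem applied to $\spec K_0 \to S$ yields an open immersion of $\spec K_0$ into a proper $S$-scheme; replacing this by the scheme-theoretic closure of $\spec K_0$ and then by its normalization, one obtains a normal integral proper $S$-scheme $\my_0$ with generic point $\spec K_0$. I then take $\my$ to be the normalization of $\my_0$ in $K$. This is finite over $\my_0$, hence proper over $S$, and is normal and integral with function field $K$. Since the $G$-action on $K$ fixes $K_0 \subset \mo_{\my_0}$, it preserves the integral closure of $\mo_{\my_0}$ in $K$, and so $\my$ inherits a canonical $G$-action extending the given one on $\spec K$.

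For $\mx$: apply Nagata's compactification to the separated finite type morphism $X \to \my$ to obtain a proper $\my$-scheme $\mx^\circ$ containing $X$ as a dense open subscheme. To equivariantize, for each $g \in G$ let $\mx^\circ_g$ denote $\mx^\circ$ with $\my$-structure morphism post-composed with $g : \my \to \my$, and set
$$
\widetilde\mx \;=\; \prod_{g \in G,\, \my} \mx^\circ_g,
$$
a proper $\my$-scheme. The formula $h \cdot (x_g)_{g \in G} = (x_{h^{-1}g})_{g \in G}$ lying over the $G$-action on $\my$ defines a $G$-action on $\widetilde\mx$, and there is a $G$-equivariant $\my$-morphism $X \to \widetilde\mx$ given by $x \mapsto (g^{-1}\cdot x)_{g}$. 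Let $\mx$ be the scheme-theoretic closure of its image, followed by normalization and passage to an irreducible component dominating $\my$; this produces a normal connected proper $S$-scheme with a $G$-action, and $X \to \mx \times_\my \spec K$ is a dense open immersion by construction.

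The principal subtlety lies in the equivariantization step: one must check that the $G$-action on $\widetilde\mx$ preserves the closure of the image of $X$ (which follows from $G$-equivariance of the morphism $X \to \widetilde\mx$) and that, after normalization and passage to a single irreducible component, $X$ remains a dense open subscheme of the generic fiber of $\mx$ over $\my$. The latter is most cleanly handled by first replacing $X$ by its normalization at the outset, which one is free to do in the applications since normalization is finite and birational, and which makes the density assertion tautological once one tracks the effect of the product-twist construction on the generic fiber.
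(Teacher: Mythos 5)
Your proposal is correct, and the heart of it---equivariantizing a Nagata compactification by mapping $X$ equivariantly into a product of copies of the non-equivariant compactification indexed by $G$ and taking the closure of the image---is exactly the paper's argument. The only genuine divergence is in how the two halves of the product trick are deployed: the paper applies it uniformly to both $\mx_1$ and $\my_1$, taking both products over $S$ via $x\mapsto(\iota(gx))_{g\in G}$, whereas you build $\my$ instead by compactifying $\spec K_0\to S$ and normalizing in $K$, and then take your product for $\mx$ over $\my$ with $g$-twisted structure morphisms. Your route for $\my$ buys a base that visibly carries the $G$-action from the start (so that $\mx\to\my$ is equivariant by construction, with no need to check that the closure in $\prod_S\mx_1$ maps into the closure in $\prod_S\my_1$), at the cost of invoking Artin's lemma and the finiteness of normalization in the finite extension $K/K_0$---harmless here since $S$ is excellent in the application, and in any case both proofs must normalize somewhere to meet the normality requirement in the definition of a $G$-integral model (a point the paper's own proof leaves implicit and you at least mention). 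Your closing worry about density after normalization is, as you say, dissolved by reducing to $X$ normal connected first, which is exactly the situation in which the lemma is applied.
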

\begin{proof}
By Nagata's compactification theorem, 
we can find an $e$-integral model $\mx_1\to\my_1$ of $X\to\spec K$, where $e$ is the trivial group. 
We write $\iota_X$ (resp. $\iota_Y$) for the morphism $X\to\mx_1$ (resp. $\spec K\to\my_1$). 
Take the closure of the image of the $G$-equivariant morphism $X\to\prod_{g\in G}\mx_1:x\mapsto(\iota_X(gx))_{g\in G}$ 
(resp. $\spec K\to\prod_{g\in G}\my_1:y\mapsto(\iota_Y(gy))_{g\in G}$), where the products are taken over $S$.  
\end{proof}

\begin{proof}[Proof of Theorem \ref{ellindepmonod}]
We prove the assertions by induction on $d=\dim X$. 
By the arguments as in \cite[2.2.3]{V}, the proof is reduced to the case where 
$X$ is normal connected and geometrically irreducible over $K$. 
For the completeness, we include the reduction arguments. 
In the reduction arguments, we use the following two claims frequently. 
\begin{claim}\label{qgal}
Let $K'$ be a finite quasi-Galois extension over $K$ which is also quasi-Galois over $K_0$. 
We write $H'$ for $\Aut(K'/K_0)$. 
Then, 
the assertions for $X'=(X\times_KK')_{\mathrm{red}}\to\spec K'$ with the $G'=G\times_HH'$-action 
are equivalent to those for $X\to\spec K$ with the $G$-action. 
\end{claim}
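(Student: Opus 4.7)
The plan is to show that, after identifying separable closures, cohomology groups, and acting groups, Theorem~\ref{ellindepmonod} for $(X', G')$ over $K'$ and Theorem~\ref{ellindepmonod} for $(X, G)$ over $K$ become literally the same statement, so each trivially implies the other. First I will fix a separable closure $\Kbar$ of $K$; since $K'/K$ is algebraic, $\Kbar$ is also a separable closure of $K'$, so I may take $\overline{K'} = \Kbar$.

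Next I will identify the cohomology groups. The closed immersion $X' \hookrightarrow X \times_K K'$ cut out by the nilradical is a universal homeomorphism, and base change gives a universal homeomorphism $X'_{\Kbar} \hookrightarrow X_{\Kbar}$. Since universal homeomorphisms induce isomorphisms on $\ell$-adic \'etale cohomology, I obtain canonical isomorphisms $H^q_c(X'_{\Kbar}, \ql) \cong H^q_c(X_{\Kbar}, \ql)$ for every $q$.

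I will then identify the acting groups. Because $G'$ acts on $K'$ through the projection $G' \to H'$, the fixed subfield $(K')^{G'}$ equals $(K')^{H'} = K_0 = K^G$, so Vidal's ramified part $E_{K_0/S}$ is the same in both settings. The natural map
$$
G' \times_{H'} \gal(\Kbar/K_0) \longrightarrow G \times_H \gal(\Kbar/K_0), \qquad ((g, h'), \sigma) \mapsto (g, \sigma),
$$
is a bijection whose inverse sends $(g, \sigma)$ to $((g, \sigma|_{K'}), \sigma)$; this is well defined because $\sigma|_{K'} \in H'$ has restriction $\sigma|_K \in H$ equal to the image of $g$. I will verify that the cohomology isomorphism above is equivariant for this bijection, using only that $X' \hookrightarrow X \times_K K'$ is $G'$-equivariant and that the Galois actions on $\Kbar$ on both sides are induced from the same action on the base.

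Under these identifications the subsets $G \times_H E_{K_0/S}$ and $G' \times_{H'} E_{K_0/S}$ correspond bijectively, and the actions of corresponding elements on the canonically isomorphic cohomology groups coincide. Consequently the eigenvalue statement and the $\ell$-independence of the alternating trace hold for $(g, \sigma)$ in the $(X, G)$-setting if and only if they hold for $((g, \sigma|_{K'}), \sigma)$ in the $(X', G')$-setting. The only point that requires care is the bookkeeping for equivariance of the cohomology isomorphism through all of the identifications; the $\ell$-adic input is just the invariance of \'etale cohomology under universal homeomorphisms, so no substantive geometric argument is needed.
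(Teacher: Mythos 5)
Your overall strategy is the same as the paper's (identify the cohomology groups, identify the acting groups, match up the ramified parts), but there is a genuine gap: you silently assume that $K'/K$ and $K'/K_0$ are separable, whereas ``quasi-Galois'' means normal but possibly inseparable. Concretely, two of your identifications fail in the inseparable case. First, $\Kbar$ is \emph{not} a separable closure of $K'$ unless $K'\subset\Kbar$; in general one must take $\overline{K'}\supset\Kbar$ with $\overline{K'}/\Kbar$ purely inseparable (this part is repairable by topological invariance of the \'etale site). Second, and more seriously, $(K')^{H'}$ with $H'=\Aut(K'/K_0)$ is \emph{not} equal to $K_0$ in general: for a normal extension it is only purely inseparable over $K_0$ (e.g.\ $K_0=\mathbb F_p(t)$, $K'=K_0(t^{1/p})$, where $H'$ is trivial and $(K')^{H'}=K'\neq K_0$). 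So the group acting in the $(X',G')$-setting is $G'\times_{H'}\gal(\overline{K'}/K'_0)$ with $K'_0=(K')^{H'}$, and the relevant ramified part is $E_{K'_0/S}$, not $E_{K_0/S}$. Your claimed bijection between the two fibre products therefore does not come for free; one needs to know that $E_{K'_0/S}\to E_{K_0/S}$ is bijective for the purely inseparable extension $K'_0/K_0$, which is exactly Lemma \ref{on E}.\ref{ram part pinsep} of the paper and requires an actual argument (the normalization of a strictly henselian valuation ring in a finite purely inseparable extension is again a strictly henselian valuation ring).

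This is not a vacuous concern: the claim is stated for quasi-Galois extensions precisely because it is invoked (via Claim \ref{replace by alt}) for the function field extension coming from a Galois alteration, where by definition $K(\mx')^\Gamma$ is only purely inseparable over $K(\mx)$. Restricted to the case where $K'/K_0$ is actually Galois, your argument is correct and agrees with the paper's; to cover the stated generality you must add the purely inseparable bookkeeping above.
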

\begin{proof}
Let $\overline{K'}$ be a separable closure of $K'$ containing $\Kbar$ 
and let $K'_0$ be the fixed subfield $(K')^{H'}$. 
Then, we have a natural isomorphism 
$H^q_c(X'\times_{K'}\overline{K'},\mathbb{Q}_\ell)
\cong
H^q_c(X\times_K\Kbar,\mathbb{Q}_\ell)$, 
which is equivariant with respect to the map 
$\alpha:G'\times_{H'}E_{K'_0/S}\to G\times_HE_{K_0/S}$. 
Since $K'_0$ is purely inseparable over $K_0$, the map $\alpha$ is bijective by Lemma \ref{on E}.\ref{ram part pinsep}. 
Thus, the assertion follows. 
\end{proof}
\begin{claim}\label{shrink}
Let $U$ be a $G$-stable dense open subscheme of $X$. 
Then, under the induction hypothesis, the assertions for $U$ and those for $X$ are equivalent. 
\end{claim}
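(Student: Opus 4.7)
The plan is to invoke the excision long exact sequence for compactly supported cohomology. Setting $Z = (X \setminus U)_{\mathrm{red}}$, I note that $Z$ is a $G$-stable closed subscheme of $X$ of dimension strictly less than $d$; the induced $G$-action on $Z$ is admissible (any affine open of $X$ containing a given $G$-orbit in $Z$ intersects $Z$ in an affine open of $Z$ containing that orbit), so the induction hypothesis will apply to $Z \to \spec K$ with the data $(K, G, \spec K \to S, K_0, H, (g,\sigma))$ unchanged. The localization long exact sequence
\begin{equation*}
\cdots \to H^{q-1}_c(Z_{\Kbar}, \ql) \to H^q_c(U_{\Kbar}, \ql) \to H^q_c(X_{\Kbar}, \ql) \to H^q_c(Z_{\Kbar}, \ql) \to \cdots
\end{equation*}
is $G \times_H \gal(\Kbar/K_0)$-equivariant, and in particular equivariant for $(g,\sigma)$.

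For the $\ell$-independence assertion, I will use additivity of the alternating trace on a long exact sequence of finite-dimensional $\ql$-representations to write
\begin{equation*}
\sum_q (-1)^q \tr((g,\sigma), H^q_c(X_{\Kbar}, \ql)) = \sum_q (-1)^q \tr((g,\sigma), H^q_c(U_{\Kbar}, \ql)) + \sum_q (-1)^q \tr((g,\sigma), H^q_c(Z_{\Kbar}, \ql)).
\end{equation*}
By the induction hypothesis applied to $Z$, the last sum is an integer independent of $\ell$; hence the $\ell$-independence assertion for $X$ and that for $U$ are equivalent.

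For the eigenvalue assertion, the long exact sequence presents $H^q_c(X_{\Kbar}, \ql)$ as an extension of a subrepresentation of $H^q_c(Z_{\Kbar}, \ql)$ by a quotient of $H^q_c(U_{\Kbar}, \ql)$, and symmetrically it presents $H^q_c(U_{\Kbar}, \ql)$ as an extension of a subrepresentation of $H^q_c(X_{\Kbar}, \ql)$ by a quotient of $H^{q-1}_c(Z_{\Kbar}, \ql)$. Therefore the multiset of $(g,\sigma)$-eigenvalues on $H^q_c(X_{\Kbar}, \ql)$ is contained in the union of the multisets on $H^q_c(U_{\Kbar}, \ql)$ and $H^q_c(Z_{\Kbar}, \ql)$, and conversely the eigenvalues on $H^q_c(U_{\Kbar}, \ql)$ lie in the union of those on $H^q_c(X_{\Kbar}, \ql)$ and $H^{q-1}_c(Z_{\Kbar}, \ql)$. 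Since all eigenvalues appearing on the $Z$-cohomology are roots of unity by induction, the two eigenvalue assertions are equivalent.

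The main (and essentially only) subtlety is the bookkeeping: one must verify that the induction hypothesis truly applies to $Z$, which reduces to checking admissibility of the induced $G$-action and that $\dim Z < d$; the morphism $\spec K \to S$ essentially of finite type, the subfield $K_0$, the group $H$, and the element $(g,\sigma) \in G \times_H E_{K_0/S}$ are literally unchanged. There is no deeper obstacle; this is a routine d\'evissage reduction.
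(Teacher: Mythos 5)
Your proof is correct and is exactly the standard d\'evissage the paper has in mind: the paper states Claim \ref{shrink} without proof, and the intended argument is precisely the $G\times_H\gal(\Kbar/K_0)$-equivariant localization sequence for $H^\bullet_c$ together with the induction hypothesis applied to the complement $Z$, which has $\dim Z<d$ and inherits an admissible $G$-action. Both the additivity of the alternating trace and the subquotient argument for the eigenvalues are handled correctly, so nothing is missing.
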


%We may assume that $X$ is reduced. 
We can take a finite Galois extension $K'$ over $K$ which is also Galois over $K_0$ 
such that every irreducible component of $X\times_KK'$ 
is geometrically irreducible over $K'$. 
We write $H'$ for the Galois group of $K'/K$. 
Then, by Claim \ref{qgal}, 
%$K'_0={(K')}^{H'}$ is purely inseparable over $K_0$. 
%By Lemma \ref{on E}.\ref{ram part pinsep}, we have 
%$
%E_{K_0/S}
%\cong
%E_{K'_0/S}
%$ 
%and hence 
the assertions for $X'=(X\times_KK')_{\mathrm{red}}$ with the $G'=G\times_HH'$-action 
are equivalent to those for $X$ with the $G$-action. 
Thus, we may assume that $X$ is reduced and every irreducible component of $X$ is geometrically irreducible over $K$. 
Further by Claim \ref{shrink}, we may assume that $X$ is normal. 

%Further by shrinking $X$ using Claim \ref{shrink} if needed, we can keep the assumption that $X$ is normal. 
Let $\{X_i\}_{i=1}^r$ be the set of connected components of $X$. 
We may assume that $g$ transitively permutes the irreducible components. 
To show the assertion 1, it suffices to show that the eigenvalues of the $r$-th iteration of $(g,\sigma)$ acting on $H^q_c(X_{i\Kbar},\ql)$ are roots of unity. 
For the assertion 2, we have 
$$\tr((g,\sigma),H^q_c(X_{\Kbar},\ql))=\sum_{\substack{i\in I\\ gX_i=X_i}}\tr((g,\sigma),H^q_c(X_{i\Kbar},\ql)).$$
Thus, we may further assume that $X$ is connected.

As in \cite[2.2.3]{V}, 
we will reduce the proof to the case where $X\to\spec K$ admits a $G$-integral model $\mx\to\my$ over $S$ which is a nodal fibration. 

We take any $G$-integral model $\mx\to\my$ of $X\to\spec K$ over $S$, which exists by Lemma \ref{nagata}. 
By Lemma \ref{alt}, we can find 
projective Galois alterations 
$(\mx',G')\to(\mx,G)$ 
and
$(\my',G')\to(\my,G)$ 
with a commutative diagram 
$$
\begin{xy}
\xymatrix{
\mathcal X'\ar[r]\ar[d]_{f'}&\mathcal X\ar[d]^{f}\\
\mathcal Y'\ar[r]&\mathcal Y
}
\end{xy}
$$
such that $f'$ is a $G'$-split nodal fibration. 
Let $K'$ be the function field of $\my'$. 
Applying Claim \ref{replace by alt} below to the diagram 
$$
\begin{xy}
\xymatrix{
X\times_\mx\mx'\ar[r]\ar[d]&X\ar[d]\\
\spec K'\ar[r]&\spec K,
}
\end{xy}
$$
we may assume that 
there exists a $G$-integral model of $X\to\spec K$ over $S$ which is a $G$-split nodal fibration. 
%Then, by Claim \ref{qgal}, 
%the assertion for $X$ with $G$-action is equivalent to that for $(X\times_KK')_{\mathrm{red}}$ with $G'$-action. 
%Thus, by replacing $\my$ by $\my'$ and $\mx$ by $\mx\times_\my\my'$, we may assume 
%that $\mathcal Y'=\mathcal Y$ in the above situation. 

%\begin{lem}\label{pi1surj}
%Let $S$ be an excellent noetherian scheme with an action of a finite group $G$. 
%Let $Y$ be a $G$-equivariant $S$-scheme separated of finite type. 
%We consider Galois alterations 
%$(Y',G')\to(Y,G)$ and $(S',G')\to(S,G)$ 
%with a commutative diagram
%$$
%\begin{xy}
%\xymatrix{
%Y'\ar[r]\ar[d]&Y\ar[d]\\
%%\my'\ar[r]\ar[d]&\my\ar[d]\\
%S'\ar[r]&S
%}
%\end{xy}
%$$
%We assume that the action of $G$ on $Y$ is admissible $($and hence so is that of $G'$ on $Y')$ and that $Y\to Y_0=Y/G$ $($resp. $Y'\to Y'_0=Y'_0/G')$ is \'etale. 
%We write $H$ $($resp. $H')$ for the Galois group of $Y\to Y_0$ $($resp. $Y'\to Y'_0)$. 
%Let $\bar\eta$ be a generic geometric point of $Y'$. 
%Then, the map 
%$$
%G'\times_{H'}E(Y'_0/S'_0,\bar\eta)\to G\times_HE(Y_0/S_0,\bar\eta)
%$$
%is surjective. 
%\end{lem}

%such that $\mathcal X'\to \mathcal Y$ is a projective split nodal fibration smooth over $\mathcal Y\setminus Z$. 
\begin{claim}\label{replace by alt}
Let $(X',G')\to(X,G)$ be %a Galois alteration. 
%Then, under the induction hypothesis, the assertion for $X'$ with $G'$-action implies that for $X$ with $G$-action. 
%Thus, combining with Claim \ref{qgal}, we see the following; 
a Galois alteration with a $G'$-equivariant commutative diagram 
$$
\begin{xy}
\xymatrix{
X'\ar[r]\ar[d]&X\ar[d]\\
\spec K'\ar[r]&\spec K
}
\end{xy}
$$
such that $K'$ is a quasi-Galois extension of $K$ which is also quasi-Galois over $K_0$. 
Then, under the induction hypothesis, the assertions for $X'\to\spec K'$ with the $G'$-action imply those for $X\to\spec K$ with the $G$-action. 
\end{claim}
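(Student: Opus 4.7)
The plan is to shrink $X$ so that $\phi \colon X' \to X$ becomes finite flat, identify $H_c^*(X_{\Kbar}, \ql)$ with the $\Gamma$-invariants of the cohomology of $X'$ for $\Gamma = \ker(G' \to \Aut(X))$, lift $(g,\sigma)$ to a suitable $(g',\sigma')$ associated with $X'$, and then express the trace for $X$ as an average of traces for $X'$ so that the hypothesis on $X'$ applies.

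First I would use Claim \ref{shrink} together with the induction hypothesis to reduce to the case where $\phi$ is finite flat: the flat locus $V \subseteq X'$ of $\phi$ is a $G'$-stable dense open, and its image $U = \phi(V) \subseteq X$ is a $G$-stable dense open; the complements $X \setminus U$ and $X' \setminus V$ have strictly smaller dimension, so the induction hypothesis applies to them and Claim \ref{shrink} lets me replace $X$ by $U$ and $X'$ by $V$. Next, $\Gamma$ is normal in $G'$ (being a kernel) and acts on $X'$ over $X$; by the defining property of a Galois alteration, $K(X')^\Gamma / K(X)$ is purely inseparable, so $X'/\Gamma \to X$ is a radicial morphism of finite type and, after shrinking once more via Claim \ref{shrink}, may be assumed to be a universal homeomorphism. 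Combined with the standard identification $H_c^q(X' \times_K \Kbar, \ql)^\Gamma \cong H_c^q(X'/\Gamma \times_K \Kbar, \ql)$ (valid because $\ql$ has characteristic zero), I then obtain a canonical isomorphism
\begin{equation*}
H_c^q(X_{\Kbar}, \ql) \;\cong\; H_c^q(X' \times_K \Kbar, \ql)^\Gamma
\end{equation*}
that is equivariant for the actions constructed in Section \ref{G-sheaves}.

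Second, I need to lift $(g,\sigma) \in G \times_H E_{K_0/S}$ to some $(g',\sigma') \in G' \times_{H'} E_{K'_0/S}$. The surjection $\alpha \colon G' \to G$ handles the group component. For the ramified part, I will argue that the functoriality map $E_{K'_0/S} \to E_{K_0/S}$ of Lemma \ref{on E}.\ref{functoriality} is surjective: given $\sigma$ realized as in Definition \ref{Vidal} by a strictly henselian valuation ring $\mo_F$ over $K_0$, I extend $\mo_F$ along the algebraic extension $K'_0/K_0$ to a valuation ring of some overfield of $K'_0$ and strictly henselize; by Lemma \ref{sh of val ring} the result is still a valuation ring, and it realizes a preimage of $\sigma$ in $E_{K'_0/S}$.

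Finally, since $\Gamma$ is normal in $G'$ and is preserved by the Galois action, the idempotent $e = |\Gamma|^{-1}\sum_{\gamma \in \Gamma}\gamma$ commutes with the image of $(g',\sigma')$ acting on $H_c^q(X' \times_K \Kbar, \ql)$, so
\begin{equation*}
\tr\!\bigl((g,\sigma), H_c^q(X_{\Kbar}, \ql)\bigr)
= \frac{1}{|\Gamma|}\sum_{\gamma \in \Gamma} \tr\!\bigl((g'\gamma, \sigma'), H_c^q(X' \times_K \Kbar, \ql)\bigr).
\end{equation*}
Part (1) of Theorem \ref{ellindepmonod} then follows because each eigenvalue of $(g,\sigma)$ on the invariants appears among those of some $(g'\gamma,\sigma')$ on the full cohomology, hence is a root of unity by hypothesis. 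Summing over $q$ and invoking part (2) for $X'$, the alternating sum for $(g,\sigma)$ is a rational number independent of $\ell$; by part (1) it is an algebraic integer, hence an integer. The main obstacle is the lifting of ramified elements across the possibly inseparable quasi-Galois extension $K'_0/K_0$, which hinges on Lemma \ref{sh of val ring} and the extension theorem for valuations; a secondary care point is the decomposition of $X' \times_K \Kbar$ into components indexed by $K$-embeddings $K' \hookrightarrow \Kbar$, but the chosen $\sigma'$ selects a single such component and the argument proceeds coherently.
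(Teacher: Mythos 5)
Your core mechanism is the same as the paper's: shrink via Claim \ref{shrink} until $X'\to X$ is finite, use the Galois-alteration condition to see that $X'/\Gamma\to X$ is a finite radicial surjection so that $H^q_c(X_{\Kbar},\ql)\cong H^q_c(X'_{\Kbar},\ql)^{\Gamma}$ (Lemma \ref{coh of quot}), average the trace over the coset of $\Gamma$, and conclude integrality from rationality plus root-of-unity eigenvalues (your explicit ``rational algebraic integer'' remark is a welcome elaboration of a point the paper leaves implicit; the flatness you ask for in the first step is not needed, only finiteness). The genuine divergence, and the gap, is in how you pass from $(g,\sigma)\in G\times_H E_{K_0/S}$ to an element of $G'\times_{H'}E_{K'_0/S}$. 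You choose a lift $g'$ of $g$ via $\alpha$ and, separately, a lift $\sigma'$ of $\sigma$ via surjectivity of $E_{K'_0/S}\to E_{K_0/S}$, but you never check that the pair lands in the fiber product, i.e.\ that $g'$ and $\sigma'$ induce the \emph{same} element of $H'=\gal(K'/K'_0)$. Both restrict to the same element of $H=\gal(K/K_0)$, but the kernel of $\Aut(K'/K'_0)\to\Aut(K/K_0)$ can be nontrivial, and neither adjusting $g'$ by an element of $\ker\alpha$ nor adjusting $\sigma'$ is obviously possible while staying in $E_{K'_0/S}$. Moreover, your surjectivity argument for $E_{K'_0/S}\to E_{K_0/S}$ (extending the valuation ring and invoking Lemma \ref{sh of val ring}) really requires $K'_0/K_0$ to be purely inseparable --- otherwise $\sigma$ need not even extend to an automorphism over $K'_0$ --- and that purely-inseparable statement is not immediate from the hypotheses as you use them; it is exactly the content of Lemma \ref{on E}.\ref{ram part pinsep} in the situation engineered by Claim \ref{qgal}.

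The paper avoids all of this by a two-step reduction: it first applies Claim \ref{qgal} to replace $K$ by $K'$ and $G$ by $G\times_H H'$, for which the map $G'\times_{H'}E_{K'_0/S}\to G\times_H E_{K_0/S}$ is \emph{bijective} precisely because $K'_0/K_0$ is purely inseparable there; after that one may assume $K'=K$, so no lifting of $\sigma$ is needed at all and any lift $g'$ of $g$ automatically gives an element $(g',\sigma)$ of the fiber product. You should either insert this reduction at the start of your argument or supply a direct proof of the fiber-product compatibility; as written, the lifting step does not go through.
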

\begin{proof}
By Claim \ref{qgal}, we may assume that $K'=K$. 
By Claim \ref{shrink}, we may assume that $X'\to X$ is finite. We write $\Gamma$ for the kernel of the map $G'\to \Aut(X)$. 
Then, by the definition of Galois alterations, 
$X'/\Gamma\to X$ is a finite radicial surjection. 
Then, by Lemma \ref{coh of quot} below, we have 
$
H^q_c(X_{\Kbar},\ql)
\cong
H^q_c(X'_{\Kbar},\ql)^\Gamma
$. 
Thus, the assertion 1 of Theorem \ref{ellindepmonod} for $X'\to\spec K'$ with the $G'$-action implies the assertion 1 for $X\to\spec K$ with the $G$-action. 
Further, we have 
$$
\tr((g,\sigma),H^q_c(X\times_{K}\Kbar,\mathbb Q_\ell))
=
\frac{1}{|\Gamma|}\sum_{g'}\tr((g',\sigma),H^q_c(X'\times_{K}\Kbar,\mathbb Q_\ell)).
$$
Here, $g'$ runs elements of $G'$ which define the same element in $\Aut(X)$ as $g$. 
Thus, the assertion 2 of Theorem \ref{ellindepmonod} for $X'\to\spec K'$ with the $G'$-action also implies the assertion 2 for $X\to\spec K$ with the $G$-action. 
\end{proof}
\begin{lem}\label{coh of quot}
Let $K$ be a field and $X$ a scheme separated of finite type over $K$. 
Let $G$ be a finite group acting admissibly on $X$ over $K$. 
Then, the natural morphism 
$$
H^q_c((X/G)_{\bar K},\ql)
\to
H^q_c(X_{\bar K},\ql)^G
$$
is an isomorphism, where $\bar K$ is a separable closure of $K$ and $\ell$ is a prime number distinct from the characteristic of $K$. 
\end{lem}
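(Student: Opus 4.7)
The plan is to compute $R\Gamma_c(X_{\Kbar},\ql)$ through the quotient morphism $\pi\colon X\to Y:=X/G$, which is a finite morphism by admissibility of the action, and to exploit that $|G|$ is automatically invertible in $\ql$. As a first step I would base change to reduce to $K=\Kbar$, which loses nothing since both sides of the asserted isomorphism live over $X_{\Kbar}$ and $Y_{\Kbar}$.

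The geometric input is the isomorphism $\ql_Y \xrightarrow{\sim}(\pi_*\ql_X)^G$ of $\ql$-sheaves on $Y$, where the $G$-action on $\pi_*\ql_X$ comes from the $G$-equivariance of $\pi$ (trivial action on $Y$). This is a stalk computation: at a geometric point $\bar y$ of $Y$, $(\pi_*\ql_X)_{\bar y}$ is free on the finite set $\pi^{-1}(\bar y)$, with $G$ acting by permutations. By the local description of $\pi$ coming from admissibility (locally $Y=\spec A^G \to\spec A$ finite, with $G$ acting transitively on the primes of $A$ lying over a given prime of $A^G$), this fiber is a single $G$-orbit, whence $(\ql^{\pi^{-1}(\bar y)})^G\cong\ql$ via the diagonal, as required.

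Next, since $\pi$ is finite we have $R\pi_!=\pi_!=\pi_*$, so $R\Gamma_c(X_{\Kbar},\ql)\cong R\Gamma_c(Y_{\Kbar},\pi_*\ql_X)$ as complexes of $\ql[G]$-modules. Because $|G|$ is invertible in $\ql$, the invariants functor $(-)^G$ is exact on $\ql[G]$-modules, so it commutes with $R\Gamma_c$. Combining with the previous paragraph yields
\[
R\Gamma_c(X_{\Kbar},\ql)^G \;\cong\; R\Gamma_c\bigl(Y_{\Kbar},(\pi_*\ql_X)^G\bigr) \;\cong\; R\Gamma_c(Y_{\Kbar},\ql),
\]
and passing to $H^q$ gives the claim.

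The main obstacle I expect is the stalk identification in the second paragraph: one must justify that the geometric fibers of $\pi$ are honest $G$-orbits, which is exactly where admissibility enters. Once this is pinned down (working first with $\zz/\ell^n$-coefficients and passing to the inverse limit if one wants to avoid subtleties of $\ql$-sheaves), the rest of the argument is formal.
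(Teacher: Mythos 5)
Your argument is correct and is essentially the proof the paper has in mind: the paper simply refers to Ochiai's Lemma~2.3, whose proof is exactly this --- $\pi\colon X\to X/G$ is finite, $(\pi_*\ql)^G\cong\ql_{X/G}$ because the geometric fibers are single $G$-orbits (the point where admissibility is used, as you note), and $(-)^G$ commutes with $H^q_c$ since $|G|$ is invertible in $\ql$. No substantive difference from the paper's (cited) proof.
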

\begin{proof}
See the proof of \cite[Lemma 2.3]{O}. 
\end{proof}

%		there exists a $G$-split nodal fibration datum 
%		$(\mx_d\to\cdots\to\mx_0,Z_0,\{\sigma_{ij}\})$ 
%		with $\mx_d=\mx$, $\mx_0=\my$ and $\bar f=f_d\circ\cdots\circ f_1$. 
%is a projective split nodal fibration which is smooth outside a divisor $Z$ of $\mathcal Y$ with %$G$-strict simple normal crossings and that $Y$ is contained in $\mathcal Y\setminus Z$. 
%Using Claim \ref{shrink} again, we may assume that $f:X\to Y$ is the base change of $\bar f:\mathcal X\to\mathcal Y$ by the open immersion $Y\to\mathcal Y$. 
%Then we take a nodal compactification $\bar{f}:\mathcal X\to\mathcal Y$ of $f$ and $Z$ as in Definition \ref{nodalcptf}. 
%By blowing up $\mathcal Y$, we may assume $Z$ is $G$-strict, i.e., for every irreducible component $D_i$ of $D$ and every $g\in G$, we have either $g(D_i)=D_i$ or $g(D_i)\cap D_i=\emptyset$. 

Further, we reduce the proof to the case where there exists a $G$-integral model over $S$ which is a strictly $G$-split nodal fibration (Definition \ref{fib}). 

We take a $G$-integral model $\mx\to\my$ of $X\to\spec K$ over $S$ which is a $G$-split nodal fibration 
and  a $G$-split nodal fibration datum 
$(\mx=\mx_d\to\cdots\to\mx_0,Z_0,\{\sigma_{ij}\})$ 
realizing $\mx\to\my$. 
By Corollary \ref{Lipman}, we can find a Galois alteration $(\psi,\alpha):(\my',G')\to(\my,G)$ with $\my'$ regular and a divisor $Z_0'$ with $G'$-strict normal crossings containing $\psi^{-1}(Z_0)$. 
%Then, 
%$(\mx'=\mx'_d\to\cdots\to\mx'_0,Z'_0,\{\sigma_{ij}\})$ 
%is a $G'$-split nodal fibration datum. 
Then, by Lemma \ref{blowup of fib}, we can find a $G'$-equivariant commutative diagram 
$$
\begin{xy}
\xymatrix{
\mathcal X'\ar[r]\ar[d]&\mathcal X\ar[d]\\
\mathcal Y'\ar[r]&\mathcal Y
}
\end{xy}
$$
such that $\mx'\to\my'$ is a strictly $G$-split nodal fibration. 
Thus, by Claim \ref{replace by alt}, we may assume that $X\to\spec K$ admits a $G$-integral model $\mx\to\my$ over $S$ which is a strictly $G$-split nodal fibration. 

Let $(g,\sigma)\in G\times_HE(K_0/S)$. 
We may assume that $G$ is a cyclic group and is generated by $g$. 
%The image of $g$ in $H$ is contained in the image of the surjection $E(Y_0/S,\bar\eta)\to H$. 
Then, by Lemma \ref{char of E}, 
there exists a geometric point $y$ of $\my$ fixed by $g$, 
and hence, $G$ naturally acts on the strict localization $\my_{(y)}$. 
Since, by Claim \ref{shrink}, we may assume that 
$X=(\mx\setminus Z_d)\times_\my\spec K$, 
the assertion follows from Proposition \ref{localmonod} below. 
%Then, the assertion for $(g,\sigma)$ and $X\to Y$ is equivalent to 
%the assertion for $(g,\sigma)$ and $X\times_\my\my_{(y)}\to\my_{(y)}$. 
%Therefore, we may further assume that $\my$ is strictly local, that is, the spectrum of a strictly henselian local ring. 
\end{proof}

\begin{prop}\label{localmonod}
Let $\my$ be the spectrum of an excellent regular strictly local ring with an action of a finite group $G$ which is trivial on the residue field of $\my$. %which fixes a geometric point over the closed point. 
We consider a strictly $G$-split proper nodal fibration datum 
$(\mx=\mx_d\to\cdots\to\mx_0,Z=Z_0,\{\sigma_{ij}\})$ 
with $\mx_0=\my$. 
%We denote the composite $\mx=\mx_d\to\cdots\to\mx_0=\my$ by $f$. 
Let $K$ be the function field of $\my$ with a separable closure $\Kbar$. 
Let $\ell$ be a prime number invertible on $\my$. 
We write $H$ for the Galois group of $K$ over the fixed subfield $K_0=K^G$. 
%Let $X$ be the open subset $(\mx\setminus Z_d)\times_\my\spec K$ of the generic fiber $\mx\times_\my\spec K$, 
Let $Z_d$ be the closed subset of $\mx=\mx_d$ as in Definition \ref{datum}. 
Then, for every $(g,\sigma)\in G\times_H\gal(\Kbar/K_0)$, 
\begin{enumerate}
\item
the eigenvalues of the action of $(g,\sigma)$ on $H^q((\mx\setminus Z_d)\times_\my\spec\Kbar,\mathbb Q_\ell)$, for each $q$, are roots of unity, 
\item
the alternating sum  
\begin{eqnarray}\label{alt sum}
\sum_q(-1)^q\tr((g,\sigma),H^q((\mx\setminus Z_d)\times_\my\spec\Kbar,\mathbb Q_\ell))
\end{eqnarray}
is an integer independent of $\ell$. 
\end{enumerate}
\end{prop}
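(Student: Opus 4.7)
The plan is to equip the nodal fibration with canonical log structures, apply Nakayama's log smooth and proper base change theorem to reduce the computation to the Kummer \'etale cohomology of the log geometric special fibre, and then analyse the log nearby cycle complex $R\varepsilon_*\ql$ via the Lefschetz--Verdier trace formula. Concretely, I would equip $\my$ with the log structure $\mm_\my$ defined by the $G$-strict normal crossings divisor $Z=Z_0$ and $\mx$ with the log structure $\mm_\mx$ defined by $Z_d$. Applying Lemma \ref{nodal log} inductively at each level $f_i:\mx_i\to\mx_{i-1}$ of the fibration, the morphism $(\mx,\mm_\mx)\to(\my,\mm_\my)$ is log smooth, saturated, and proper. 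Let $y$ be the closed point of $\my$; since $G$ acts trivially on the residue field of $\my$, $g$ fixes $y$. Fix a log geometric point $\tilde y$ above $(y,\mm_y)$ as in Section \ref{G-sheaves}, and let $\widetilde K$ be the fraction field of its log strict localization. By Nakayama's log proper base change theorem, passing through a log geometric generic point and using that the log structure on $\mx_K$ is the one associated with the SNC divisor $Z_d\cap\mx_K$, one obtains a chain of $(g,\sigma)$-equivariant isomorphisms
$$
H^q_{\text{k\'et}}(\mx_{\tilde y},\ql)\xrightarrow{\sim}H^q_{\text{k\'et}}(\mx\times_\my\spec\widetilde K,\ql)\cong H^q((\mx\setminus Z_d)\times_\my\spec\widetilde K,\ql),
$$
which together with Lemma \ref{equiv} identifies $H^q_{\text{k\'et}}(\mx_{\tilde y},\ql)$ $(g,\sigma)$-equivariantly with the target cohomology group.

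Next I would analyse the Kummer \'etale cohomology of $\mx_{\tilde y}$ through the morphism of topoi $\varepsilon:(\mx_{\tilde y})_{\text{k\'et}}\to(\mx_{\tilde y})_{\text{\'et}}$ which forgets the log structure. Locally where the characteristic monoid $\mbar_{\mx_{\tilde y}}$ is free of rank $r$, the stalk of $R^q\varepsilon_*\ql$ is the exterior power $\Lambda^q(\mbar_{\mx_{\tilde y}}^{\mathrm{gp}}\otimes\ql)(-q)$. The pair $(g,\sigma)\in G\times_H\gal(\Kbar/K_0)$ acts as follows: $g$ acts geometrically on $\mx_{\tilde y}$ via its action on $(\mx,\mm_\mx)$, while $\sigma$ acts trivially on the underlying scheme and on the coefficient sheaves through its image in the tame log inertia at $\tilde y$, a quotient of $\mathrm{Hom}(\mbar_{\my,y}^{\mathrm{gp}},\widehat{\zz}(1))$, which operates by translation through the cokernel $\mbar_{\mx_{\tilde y}}^{\mathrm{gp}}/\mbar_{\my,y}^{\mathrm{gp}}$. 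In particular the $\sigma$-action on each $R^q\varepsilon_*\ql$ factors through a finite quotient, and the Leray spectral sequence for $\varepsilon$ implies that every eigenvalue of $(g,\sigma)$ on $H^q_{\text{k\'et}}(\mx_{\tilde y},\ql)$ is a root of unity, proving assertion 1.

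For assertion 2 I would apply the Lefschetz--Verdier trace formula to the perfect $g$-equivariant $\ql$-complex $K_\sigma$ on $\mx_{\tilde y}$ obtained from $R\varepsilon_*\ql$ by absorbing the action of $\sigma$. The Leray spectral sequence rewrites the alternating sum as
$$
\sum_q(-1)^q\tr((g,\sigma),H^q_{\text{k\'et}}(\mx_{\tilde y},\ql))=\sum_p(-1)^p\tr(g,H^p_{\text{\'et}}(\mx_{\tilde y},K_\sigma)),
$$
and the Lefschetz--Verdier formula expresses the right-hand side as a sum of local contributions supported on the fixed locus $\mx_{\tilde y}^g$, each determined by the stratification of $\mx_{\tilde y}$ by $Z_d$ together with the explicit $\sigma$-action on the exterior powers $\Lambda^\bullet(\mbar^{\mathrm{gp}}\otimes\ql)$.

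The main obstacle is the verification that these local contributions are \emph{integers} and are \emph{independent of} $\ell$. The way I envisage handling it is to reduce first to torsion coefficients $\zz/\ell^n$, compute $R\varepsilon_*(\zz/\ell^n)$ using the standard toric-chart description of the Kummer \'etale site at the nodal and smooth strata, and then pass to the inverse limit; since the characteristic monoid $\mbar^{\mathrm{gp}}$ together with the finite-order $\sigma$-action on it are defined intrinsically in terms of the log structure without reference to $\ell$, and the Lefschetz--Verdier local terms for a finite-order automorphism acting on a proper scheme are known to be integers, the resulting alternating sum is an $\ell$-independent integer.
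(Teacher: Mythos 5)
Your setup agrees with the paper's proof: the log structures defined by $Z_0$ and $Z_d$, log smoothness and saturatedness via Lemma \ref{nodal log}, Nakayama's proper log smooth base change to identify $H^q((\mx\setminus Z_d)_{\Kbar},\ql)$ equivariantly with $H^q_{\text{k\'et}}(\mx_{\tilde y},\ql)$, and the Leray spectral sequence for $\varepsilon$ with $R^q\varepsilon_*\ql$ an exterior power of the relative characteristic sheaf. Assertion 1 then comes out as you describe (and as in the paper, the point being that the $\gal(\Kbar/K_0)$-action on $\bigwedge^q(\mgp_{\mx_y/y}\otimes\ql(-1))$ is trivial since the residue field is separably closed, so $(g,\sigma)$ acts on each $E_2^{p,q}$ through the finite-order element $g$).

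The gap is in your treatment of assertion 2. You reduce everything to the claim that the Lefschetz--Verdier local terms of a finite-order automorphism acting on the proper (singular) scheme $\mx_y$ with coefficients in the complex $R\varepsilon_*\ql$ are integers independent of $\ell$. This is not a known off-the-shelf fact: integrality and $\ell$-independence of Lefschetz--Verdier local terms for constructible complexes on singular schemes is precisely the kind of statement that is inaccessible in general (which is why results such as \cite[Theorem 2.2]{IZ}, \cite{O}, and the present paper establish the \emph{global} alternating sum directly rather than via local terms). The fixed locus $\mx_y^g$ can meet the nodes and the divisor $Z_d$ arbitrarily, and nothing in your argument controls the local contributions there; you have in effect deferred the whole difficulty to an unproven assertion at least as strong as the proposition itself. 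The paper closes the argument differently, and this is the step you are missing: stratify $\mx_y$ by the strata $\mathring D_{I'}$ of the $G$-strict normal crossings divisor, observe via Lemma \ref{log of snc} and Lemma \ref{functoriality log} that $\mgp_{\mx_y/y}$ restricted to each stratum is the \emph{constant} sheaf $M_{I'}=\cok(\varphi_{I'})$, a finitely generated abelian group with $G$-action defined over $\zz$ independently of $\ell$, so that
$$
H^p_c((\mathring D_{I'})_y,\bigwedge^q(\mgp_{\mx_y/y}\otimes\ql))\cong H^p_c((\mathring D_{I'})_y,\ql)\otimes\bigwedge^qM_{I'}.
$$
The trace then factors as $\tr(g,\bigwedge^qM_{I'}\otimes\qq)$ (manifestly an $\ell$-independent rational number) times $\sum_p(-1)^p\tr(g,H^p_c((\mathring D_{I'})_y,\ql))$, and the latter is the equivariant Euler characteristic with \emph{constant} coefficients of a variety over a separably closed field, which is handled by one more round of alterations reducing to the classical Lefschetz fixed point formula on a smooth proper variety (Proposition \ref{ellindep geom}). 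No local terms on singular schemes are ever needed. If you want to salvage your route you would have to either prove the local-term statement you invoke or replace it by this stratification argument.
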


%	We use an equivariant form of Nakayama's log local acyclicity of log smooth morphisms, which is explained below. 
%	In the situation of the above proposition, 

%\begin{lem}
%Let $\my$ be an fs log scheme whose underlying scheme is the spectrum of a normal henselian local ring. 
%Let $G$ be a finite group acting on the log scheme $\my$. 
%We write $\my_0$ for the quotient $\my/G$ as a scheme. 
%%We denote the generic point of $\my_0$ by $\eta_0$. 
%Let $\tilde y$ be a log geometric point over the closed point $y$ of $\my$ with log structure induced from $\my$. 
%We denote the log strict localization by $\my_{(\tilde y)}$. 
%Then, 
%\end{lem}

\begin{proof}
%Let $\bar\eta'$ be the geometric point of 
%$\mathcal Y\times_{\mathcal Y_0}\mathcal Y_{0(y_0)}=\coprod_{y'\mapsto y_0}\mathcal Y_{(y')}$ 
%defined by the geometric point $\bar\eta$ of $Y$ and the geometric point $\bar\eta'_0$ of $\mathcal Y_{0(y_0)}$. 
%Let $y$ be the geometric point of $\mathcal Y$ such that $\bar\eta'$ lies over $\mathcal Y_{(y)}$. 
%%and $y$ the geometric point of $\mathcal Y$ lying under $y'$. 
%We may assume $G$ fixes $y$. 
%We write the divisor $Z$ as the union $\bigcup_{j\in J}Z_j$ of irreducible components $Z_j$. 
%We can replace $\mathcal Y$ by any $G$-stable open neighborhood of the point $\underline y\in\mathcal Y$ under $y$. 
%%By shrinking $\mathcal Y$ if needed, 
%Thus, we may assume that $\underline y$ is contained in all $Z_j$. 
%%By adding components if needed, we may assume that the intersection $\bigcap_{j\in J}Z_j$ is one point $\underline y$. 
%%See Lemma \ref{AddComp}. 
%Further, by blowing up $\mathcal X$ if needed, we may assume that $\mathcal X$ is regular and $\bar f^{-1}(Z)$ is a divisor with 
%%$G$-strict 
%simple normal crossings. 
%%See Lemma \ref{blowup}
Let $\mm_{\mx_i}$ be  the log structure on $\mx_i$ defined by $Z_i$, 
where $Z_i$ is the closed subscheme defined as in Definition \ref{datum}. 
By purity for log regular log schemes \cite[Theorem 7.4]{I}, 
we have a canonical isomorphism 
$$H^q((\mx\setminus Z_d)\times_\my\spec\Kbar,\mathbb Q_\ell))\cong
H^q(\mx_{\Kbar,\text{k\'et}},\ql)
$$ 
for each $q$. 
Here, $\mx_{\Kbar,\text{k\'et}}$ denotes the Kummer \'etale topos of the log scheme 
$(\mx,\mm_\mx)\times_{(\my,\mm_\my)}\spec\Kbar$, 
where we regard $\spec\Kbar$ as a log scheme with the trivial log structure. 

By Lemma \ref{nodal log}, 
$f^{\log}:(\mx,\mm_\mx)\to(\my,\mm_\my)$ induced by the composite $\mx\to\my$ 
is a log smooth and saturated morphism of fs log schemes. 
By a proper log smooth base change theorem \cite[Proposition 4.3]{N}, $Rf^{\text{log}}_*\mathbb Q_\ell$ is lisse with respect to the Kummer \'etale topology. 
Here, for each $q$ the lisse sheaf $R^qf^{\log}_*\ql$ has a canonical $G$-sheaf structure. 
Thus, using the proper exact base change theorem \cite[Theorem 5.1]{N} and the construction in Section \ref{G-sheaves}, 
we obtain a $G\times_H\gal(\Kbar/K_0)$-equivariant isomorphism 
$$H^q(\mx_{\Kbar,\text{k\'et}},\ql)\cong H^q(\mx_{\tilde y,\text{k\'et}},\ql).$$
Here, $\mx_{\widetilde y,\text{k\'et}}$ denotes 
the log scheme $(\mx,\mm_\mx)\times_{(\my,\mm_\my)}\widetilde y$, 
where $\widetilde y$ is a log geometric point over $(y,\mm_\my|_y)$ and 
where the fiber product is taken in the category of saturated log schemes. 

We denote the underlying scheme of the log scheme $\mx_{\tilde y}$ 
by $(\mx_{\tilde y})^o$. 
Since $f^{\log}$ is saturated, the natural morphism 
$(\mathcal X_{\tilde y})^o\to\mathcal X_y$ 
of usual schemes is an isomorphism by \cite[Proposition II.2.13]{T}. 
We consider the natural morphism 
$\varepsilon:\mathcal X_{\tilde y}\to\mathcal X_{\tilde y}^o$ 
of log schemes. 
We identify it with the morphism 
$\mathcal X_{\tilde y}\to\mathcal X_y$, where we regard $\mx_y$ as a log scheme with the trivial log structure. 
Then, the sheaf $R^q\varepsilon_*\ql$ for each $q$ has 
a canonical $G\times_H\gal(\Kbar/K_0)$-sheaf structure and 
we have a $G\times_H\gal(\Kbar/K_0)$-equivariant spectral sequence 
\begin{eqnarray}\label{spec seq}
E_2^{p,q}=H^p(\mx_y,R^q\varepsilon_*\ql)\Rightarrow H^{p+q}(\mx_{\tilde y,\text{k\'et}},\ql).
\end{eqnarray}
Thus, we have an equality 
$$
\sum_q(-1)^q\tr((g,\sigma),H^q(\mx_{\tilde y,\text{k\'et}},\ql))
=
\sum_q(-1)^q\sum_p(-1)^p\tr((g,\sigma),H^p(\mx_y,R^q\varepsilon_*\ql)).
$$

We shall describe $R^q\varepsilon_*\ql$ in terms of log structures. 
Let $\mgp_{\mx_y/y}$ be the cokernel of the morphism 
$f^{-1}\mgp_y\to\mgp_{\mx_y}$. 
It has a natural $G$-sheaf structure 
coming from the action of $G$ on the log schemes $\mx_y$ and $y$. 
By \cite[Corollaire 5.4]{V}, we have
a canonical $G\times_H\gal(\Kbar/K_0)$-equivariant isomorphism 
$$
R^q\varepsilon_*\ql
\cong
\bigwedge^q(\mgp_{\mx_y/y}\otimes\ql(-1)),
$$
Here, we consider the $G\times_H\gal(\Kbar/K_0)$-sheaf structure on 
$\mgp_{\mx_y/y}$ defined by the surjection 
$G\times_H\gal(\Kbar/K_0)\to G$ and the $G$-sheaf structure on it. 
Thus, the eigenvalues of $(g,\sigma)$ acting on $E_2^{p,q}$ in the spectral sequence (\ref{spec seq}) are roots of unity, and hence, the assertion 1 follows. 
Further, we have an equality 
$$
\sum_p(-1)^p\tr((g,\sigma),H^p(\mx_y,R^q\varepsilon_*\ql))
=
\sum_p(-1)^p\tr(g,H^p(\mx_y,\bigwedge^q(\mgp_{\mx_y/y}\otimes\ql)))
$$
for each $q$. Then, the assertion follows from Proposition \ref{ellindep geom} below. 
\end{proof}

%\begin{prop}
%Let $\my$ be the spectrum of an excellent regular strictly local ring with an action of a finite group $G$ which is trivial on the residue field of $\my$. %which fixes a geometric point over the closed point. 
%We consider a strictly $G$-split nodal fibration datum 
%$(\mx=\mx_d\to\cdots\to\mx_0,Z=Z_0,\{\sigma_{ij}\})$ 
%with $\mx_0=\my$. 
%We endow $\mx_i$ with 
%For every $g\in G$ and $q\geq0$, the alternating sum 
%$$
%\sum_p(-1)^p\tr(g,H^p(\mx_y,\bigwedge^q(\mgp_{\mx_y/y}\otimes\ql)))
%$$
%is a rational number independent of a prime number $\ell$ invertible on $\my$. 
%\end{prop}
\begin{prop}\label{ellindep geom}
Let $\my$ be the spectrum of an excellent regular strictly local ring with an action of a finite group $G$ which is trivial on the residue field of $\my$. 
We consider a $G$-equivariant commutative diagram 
$$
\begin{xy}
\xymatrix{
\mx\ar[r]^f&\my\\
U\ar[r]\ar[u]^j&V\ar[u]_{j'}
}
\end{xy}
$$
such that 
$\mx$ is regular, 
$f$ is flat and proper, 
$j$ and $j'$ are open immersions, 
and $D=\mx\setminus U$ and $Z=\my\setminus V$ are divisors with simple normal crossings. 
We endow $\mx$ (resp. $\my$) with the log structure $\mm_\mx$ (resp. $\mm_\my$) defined by $D$ (resp. $Z$). 
Let $y$ be the closed point of $\my$ 
and let $\mgp_{\mx_y/y}$ be the cokernel of the morphism 
$f^{-1}\mgp_y\to\mgp_{\mx_y}$ with a natural action of $G$, 
where $\mm_{\mx_y}$ (resp. $\mm_y$) is the log structure on $\mx_y$ (resp. $y$) obtained from $\mm_\mx$ (resp. $\mm_\my$) via pullback. 
Then, for every $g\in G$ and $q\geq0$, the alternating sum 
\begin{eqnarray}\label{alt sum of sp fib}
\sum_p(-1)^p\tr(g,H^p(\mx_y,\bigwedge^q(\mgp_{\mx_y/y}\otimes\ql)))
\end{eqnarray}
is an integer independent of a prime number $\ell$ distinct from the residual characteristic of $\my$. 
\end{prop}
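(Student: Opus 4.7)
The strategy is to reduce to constant $\ql$-sheaves by dévissage along a $G$-stable stratification of $\mx_y$ on which $\mgp_{\mx_y/y}$ becomes a constant $\zz$-sheaf, and then invoke two standard integer $\ell$-independence facts: the $\ell$-independence of the $g$-equivariant Euler characteristic of a finite-type scheme over a field with admissible finite group action, and the trivial fact that a finite-order integer automorphism of a finite-rank $\zz$-module has integer trace on every wedge power.

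First I would stratify $\mx_y$. Writing $D = \bigcup_{i \in I} D_i$ and $Z = \bigcup_{j \in J} Z_j$ for the decompositions into irreducible components, consider the stratification of $\mx_y$ by the locally closed subsets $\mathring D_{I'} \cap \mx_y$ for $I' \subseteq I$. On such a piece, Lemma \ref{log of snc} identifies $\mgp_\mx$ with the constant sheaf $\zz^{I'}$; since $\my$ is strictly local, every irreducible component of $Z$ contains the closed point $y$, so $\mgp_y$ is constant of value $\zz^J$; and by Lemma \ref{functoriality log} the map $f^{-1}\mgp_y \to \mgp_\mx$ is given on the stratum by a fixed integer matrix. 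Hence $\mgp_{\mx_y/y}$ is a constant $\zz$-sheaf on each such stratum, with value a finitely generated abelian group $M_{I'}$. The group $G$ permutes the strata via its action on the indexing sets $I$ and $J$; passing to unions of $G$-orbits yields a $G$-stable locally closed stratification $\{S_\alpha\}$ of $\mx_y$ on which $\mathcal F_q := \bigwedge^q(\mgp_{\mx_y/y} \otimes \ql)$ is a constant $\ql$-sheaf.

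Since $f$ is proper, $H^p(\mx_y,-) = H^p_c(\mx_y,-)$, and the long exact sequence of $H_c^*$ for a $g$-stable open/closed decomposition is $g$-equivariant, so the alternating $g$-trace is additive along the stratification:
\begin{equation*}
\sum_p (-1)^p \tr\bigl(g, H^p(\mx_y, \mathcal F_q)\bigr) = \sum_\alpha \sum_p (-1)^p \tr\bigl(g, H_c^p(S_\alpha, \mathcal F_q|_{S_\alpha})\bigr).
\end{equation*}
On each stratum, the identification $H_c^*(S_\alpha, (M_\alpha \otimes \ql)_{S_\alpha}) \cong H_c^*(S_\alpha, \ql) \otimes_\ql (M_\alpha \otimes \ql)$ combined with the $G$-equivariant structure lets one expand the $g$-trace, orbit by orbit on $\pi_0(S_\alpha)$, as a sum of products $\chi_c(g, C, \ql) \cdot \tr(g|_{M_\alpha})$ taken over $g$-fixed connected components $C$ of $S_\alpha$ (orbits of length $>1$ contribute zero to the trace). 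Here $\tr(g|_{M_\alpha}) \in \zz$ because $g$ acts on the finitely generated abelian group $M_\alpha$ by a finite-order integer matrix, and $\chi_c(g, C, \ql) \in \zz$ independently of $\ell$ by the standard $\ell$-independence of $g$-equivariant Euler characteristics on a finite-type scheme over a field, which one can establish by de Jong--Gabber alteration together with the Lefschetz fixed-point theorem in the smooth proper case. Summing these integer contributions gives the desired integrality and $\ell$-independence.

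The principal technical point is the equivariant bookkeeping: one must verify that the identification of $\mgp_{\mx_y/y}|_{S_\alpha}$ with a constant $\zz$-sheaf is $g$-equivariant, with $g$ acting on the stalk $M_\alpha$ by the permutation-integer matrix dictated by its action on $I$, $J$ and the combinatorics of the cokernel; and that contributions from $g$-orbits of length $>1$ on $\pi_0(S_\alpha)$ indeed cancel in the trace. Once these combinatorial details are settled, the proof is a straightforward application of the additivity of $\chi_c(g,-,-)$ and the two cited $\ell$-independence ingredients.
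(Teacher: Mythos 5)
Your proposal is correct and follows essentially the same route as the paper: stratify $\mx_y$ by the loci $\mathring D_{I'}$ where Lemmas \ref{log of snc} and \ref{functoriality log} make $\mgp_{\mx_y/y}$ constant with value the cokernel $M_{I'}$ of an explicit integer matrix, group strata into $G$-orbits, observe that only $g$-stable pieces contribute to the trace, factor each contribution as $\tr(g,\bigwedge^q M_{I'}\otimes\qq)$ times an equivariant Euler characteristic with constant $\ql$-coefficients, and conclude by the alteration-plus-Lefschetz $\ell$-independence of the latter. The paper organizes the vanishing of non-fixed contributions via the $G$-action on the power set of $I$ rather than on $\pi_0$ of the strata, but this is only a difference in bookkeeping.
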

\begin{proof}
%We write the divisors $Z$ and $D$ as the unions of their irreducible components: 
%$Z=\bigcup_{j\in J}Z_j$ and
%$D=\bigcup_{i\in I}D_i$. 
%For $j\in J$, we consider the set $I_j$ of indices of irreducible components lying above $Z_j$, i.e., we put $I_j=\{i\in I|f(D_i)\subset Z_j\}$. 
%We consider the natural action of $G$ on $I$ defined by the action on 
%$Z_d=\bigcup_{i\in I}D_i$. 

We write $D$ and $Z$ as the sums of their irreducible components: 
$D=\sum_{i\in I}D_i$ and $Z=\sum_{j\in J}Z_j$. 
For a subset $I'\subset I$, we put $D_{I'}=\bigcap_{i\in I'}D_i$ and $\mathring D_{I'}=D_{I'}\setminus\bigcup_{i\in I\setminus I'}D_{I'\cup\{i\}}$. 
Let $I_f$ be the subset of $I$ consisting of elements $i\in I$ such that $f(D_i)\subset Z$. 
Here, since $f$ is assumed to be flat, 
we can consider the pullback $f^*Z$ of the divisor $Z$ and a map $\varphi:I_f\to J$ sending $i$ to $j$ such that $f(D_i)\subset Z_j$. 
We write the divisor $f^*Z$ 
as $\sum_{i\in I}m_iD_i$ for some $m_i\in\zz_{\geq0}$. 

Let $I'$ be a $G$-stable subset of $I$. 
Then, by Lemma \ref{log of snc}, we have a $G$-equivariant commutative diagram: 
$$
\begin{xy}
\xymatrix{
(f^{-1}\mgp_{y})|_{(\mathring D_{I'})_y}\ar[r]&
\mgp_{\mx_y}|_{(\mathring D_{I'})_y}\\
\zz^{J}\ar[r]_{\varphi_{I'}}\ar[u]_{\cong}&
\zz^{I'}\ar[u]_{\cong},
}
\end{xy}
$$
where $\varphi_{I'}$ is the homomorphism sending $e_j$ to $\sum_{i\in I' \cap \varphi^{-1}(j)}m_ie_i$. 
Thus, we get an isomorphism of $G$-sheaves
\begin{eqnarray}\label{relative log str}
(\mgp_{\mx_y/y})|_{(\mathring D_{I'})_y}\cong M_{I'},
\end{eqnarray}
where $M_{I'}$ is the constant sheaf given by the cokernel of the map $\varphi_{I'}$ 
with the $G$-sheaf structure defined by the action of $G$ on $I'$. 

We may and do assume that $G$ is a cyclic group generated by $g$. 
%Let $\mathcal S$ be the set of $G$-orbits of subsets of $I$ under the natural action of $G$ on the set of subsets of $I$. 
%that is, 
We consider the power set $\mathcal P(I)$ of $I$, that is, the set of all subsets of $I$, and the natural action of $G$ on $\mathcal P(I)$. 
For $A\in G\backslash\mathcal P(I)$, we set 
$\mathring D_A=\bigcup_{I'\in A}\mathring D_{I'}$. 
%Since the divisor $Z_d=\bigcup_{i\in I} D_i$ with simple normal crossings is $G$-strict, 
%we have  
%$\bigcup_{I'\in A}\mathring D_{I'}
%=
%\coprod_{I'\in A}\mathring D_{I'}$. 
Then $(\mathring D_A)_y$ give a stratification 
$\mathcal X_y=\coprod_{A\in G\backslash\mathcal P(I)}(\mathring D_A)_y$ by $G$-stable locally closed subsets, and the alternating sum (\ref{alt sum of sp fib}) is equal to 
\begin{align*}
%\sum_p(-1)^p&\tr(g,H^p(\mx_y,\bigwedge^q(\mgp_{\mx_y/y}\otimes\ql))\\
%&=
\sum_{A\in G\backslash\mathcal P(I)}\sum_p(-1)^p\tr(g,H^p_c((\mathring D_A)_y,\bigwedge^q(\mgp_{\mx_y/y}\otimes\ql)).
\end{align*}
Here, 
$\tr(g,H^p_c((\mathring D_A)_y,-))$
is nonzero only if $A$ is the orbit of a $G$-stable subset of $I$. 
Thus, it suffices to show that 
%denoting by $\mathcal P(I)^G$ the set of $G$-stable subsets of $I$, the right hand side of the above equation is equal to  
for a $G$-stable subset $I'$ of $I$, the alternating sum 
\begin{align}\label{alternating sum}
%	\sum_p(-1)^p&\tr(g,H^p(\mx_y,\bigwedge^q(\mgp_{\mx_y/y}\otimes\ql))\\
%&=
%\sum_{I'\in\mathcal P(I)^G}
\sum_p(-1)^p\tr(g,H^p_c((\mathring D_{I'})_y,\bigwedge^q(\mgp_{\mx_y/y}\otimes\ql)).
\end{align}
is an integer independent of $\ell$ distinct from the residual characteristic of $\my$. 
By the above description (\ref{relative log str}) of $\mgp_{\mx_y/y}$, 
we obtain a $G$-equivariant isomorphism
$$
H^p_c((\mathring D_{I'})_y,\bigwedge^q(\mgp_{\mx_y/y}\otimes\ql))
\cong
H^p_c((\mathring D_{I'})_y,\mathbb Q_\ell)\otimes\bigwedge^qM_{I'}.
$$
Hence, the alternating sum (\ref{alternating sum}) is equal to 
\begin{align*}
%\sum_p(-1)^p&\tr(g,H^p_c((\mathring D_{I'})_y,\bigwedge^q(\mgp_{\mx_y/y}\otimes\ql))\\
%&=
\tr(g,\bigwedge^qM_{I'}\otimes\qq)\cdot\sum_{p}(-1)^p
\tr(g,H^p_c((\mathring D_{I'})_y,\mathbb Q_\ell)).
\end{align*}
Thus, it suffices to show that 
for a scheme $X$ separated of finite type over a separably closed field $K$ 
with an action of a finite group $G$, 
the alternating sum $\sum_q(-1)^q\tr(g,H_c^q(X,\ql))$ for $g\in G$ is an integer independent of $\ell$ different from the characteristic of $K$. 
Note that this is nothing but Theorem \ref{ellindepmonod} in the case where $K$ is separably closed, $\spec K=S$, 
and the $G$-action on $K$ is trivial. 
As we did in the beginning of the proof, we can reduce, using alterations, the proof to the case where $X$ is smooth and proper over $K$. But, in this case, the assertion follows from the Lefschetz trace formula. 
\end{proof}

\begin{rem}
We make comments on $G$-strictness of divisors with simple normal crossings. 
\begin{enumerate}
\item
We do not use $G$-strictness in the proof of Theorem \ref{ellindepmonod}: 
In the proof of Theorem \ref{ellindepmonod}, we reduce the problem to the case where $X\to\spec K$ admits a $G$-integral model which is a strictly $G$-split nodal fibration, and 
in the definition (Definition \ref{datum}, \ref{fib}) of a strictly $G$-split nodal fibration, 
we impose the condition that $Z_i$ is a divisor with $G$-strict normal crossings. 
But, for the argument after the reduction to work, it suffices to reduce to the case where $X\to\spec K$ admits a $G$-integral model which is a strictly split nodal fibration. 
\item
The $G$-strictness assumption makes the argument simpler in the sense that we can avoid to see combinatorial action of $G$ on the set $I$ of irreducible components of the divisor $D$, which appears in the proof of Proposition \ref{ellindep geom}. 
In fact, the $G$-strictness assures that the intersection $D_{I'}$ is nonempty only if the action of $G$ on $I'$ is trivial. 
Further, since we have $\sum_q(-1)^q\tr(g,\bigwedge^qM_{I'})=0$ for such $I'$ if $I'$ is nonempty, 
%on which $G$ acts trivially, 
we get a simpler description of the alternating sum (\ref{alt sum}): it is equal to 
$\sum_p(-1)^p\tr(g,H^p((\mathring D_{\emptyset})_y,\ql)).$ 
\end{enumerate}\end{rem}

\end{document}